\newtheorem{thm}{Theorem}[section] \newtheorem*{theorem*}{Theorem}
\newtheorem{lemma}[thm]{Lemma} \newtheorem*{lemma*}{Lemma}
\newtheorem{corollary}[thm]{Corollary} \newtheorem*{corollary*}{Corollary}
\newtheorem{prop}[thm]{Proposition} \newtheorem*{proposition*}{Proposition}
\newtheorem*{definition*}{Definition}
\newtheorem{rmk}[thm]{Remark}
\newcommand{\Z}{\mathbb Z}
\newcommand{\N}{\mathbb N}
\newcommand{\R}{\mathbb R}
\newcommand{\E}{\mathbb E}
\newcommand\cA{{\mathcal A}}
\newcommand\cI{{\mathcal I}}
\newcommand\cL{{\mathcal L}}
\newcommand\eps{\varepsilon}
\newcommand\sgn{\operatorname{sgn}}
\newcommand\mic{\mathcal M}
\newcommand{\1}{{\mathbb 1}}
\DeclareMathOperator{\Leb}{Leb}
\DeclareMathOperator{\Var}{Var}
\newcommand{\banach}\cL
\def\thetaout{\theta^{out}}
\def\rout{r^{out}}
\def\thetain{\theta^{in}}
\def\rin{r^{in}}
\title{On some random billiards in a tube with superdiffusion}
\author{Henk Bruin
	\thanks{Faculty of Mathematics, University of Vienna, 
		Oskar Morgensternplatz 1, 1090 Vienna, Austria; {\it henk.bruin@univie.ac.at}}
	\and Niels Kolenbrander
	\thanks{Mathematisch Instituut,
		University of Leiden,
		Einsteinweg 55,
		2333 CC Leiden, The Netherlands;
		{\it niels.kolenbrander@outlook.com}}
	\and Dalia Terhesiu
	\thanks{Mathematisch Instituut,
		University of Leiden,
		Einsteinweg 55,
		2333 CC Leiden, The Netherlands;
		{\it daliaterhesiu@gmail.com}}
}
\date{\today}
\begin{document}
	
	\maketitle
	
	\abstract{\em We consider a class of random billiards in a tube, 
		where reflection angles at collisions with the boundary of the tube are
		random variables rather than deterministic (and elastic) quantities.
		We obtain a (non-standard) Central Limit Theorem for the displacement of a particle, which marginally fails to have a second moment
		w.r.t.\ the invariant measure of the random billiard.
		\\[3mm] {\it 2000 Mathematics Subject Classification.} Primary 37D50, Secondary 37A30, 60D05
		\\[1mm] {\it Keywords and phrases.} billiards, random billiards, limit law, non-Gaussian Central Limit Theorem
	}
	\section{Introduction}\label{sec:intro}	
	In this paper we prove a non-standard Central Limit Theorem for the horizontal displacement of a particle that moves horizontally, but in a random way, in an infinite strip (called the tube) of width $W$ that has a non-smooth boundary. A billiard system like this was first considered in~\cite{FY}, where the aim was to devise a more mathematical approach to studying a gas flow in a tube where collisions with the wall are not deterministic, but random. This randomness can be caused by some energy at the boundary, or by the presence of a chemical structure on the boundary. In the second case a question, already posed by Knudsen in~\cite{Knud}, is how the chemical structure affects the movement of the particles.
	
	The long-term behaviour of particles, can be understood by proving statistical properties about a corresponding billiard model such as introduced in~\cite{FY}. A recurring property of these models is that the post-collision angle will be random variables depending on the pre-collision angles. This gives rise to a Markov chain, of which variations were studied in~\cite{FE,FY,FZ10,FZ12}. Similar to what is done in the previously mentioned papers, we will model the roughness of the boundary of the tube by covering it with tiny so-called \textit{microstructures}, which are small billiards tables bounded by finitely many convex smooth curves and an open side at the boundary of the tube, see Section~\ref{sec:basicDef} for a precise description of these microstructures. We will compute the trajectories in these microstructures using the rules of deterministic, elastic billiards. The randomness is sitting only in the entrance point of the particle into a microstructure; this will be a random variable on the open side. This is motivated by the fact that microstructures model microscopic chemical structures which are tiny compared to the width $W$ of the tube. The randomness captures the effect that from the macroscopic scale of the tube it is hard to predict where exactly a microscopic structure will be entered, so that it will seem random.
	
	The geometric setup is somewhat analogous to the Lorentz gas in a one-dimensional tube with infinite horizon, see \cite{SV07}, especially since we model the microstructures to be tangent to the tube. Several refined stochastic properties of the deterministic Lorentz gas
	with infinite horizon (for dimensions $1$ and $2$) have been obtained in recent years, see~\cite{ChDo09,BBT,BT25,PT23} and references therein. See also the recent survey~\cite{Pene25}.
	Substantial progress was made in understanding a certain class of random perturbations (for instance, random perturbations of the shape of the scatterers) of finite and infinite horizon billiards~\cite{DZ13}. In the finite horizon case, a (standard) central limit theorem is also obtained in~\cite{DZ13}.
	
	In the random billiard model considered here we don't prescribe the exact positions
	of the particle along the boundary of the tube and as a consequence, there is no spacial periodicity. Therefore, the existing  methods of  treating determistic Lorentz gases do not work as such.
	We consider the position of the particle once it reaches the boundary of the tube as random, and use this randomization to replace the two-dimensional billiard map by a piecewise expanding one-dimensional random map $\Psi_{R_i}$. The parameter $R_i$ is a random variable taking values in $\left[0,1\right]$, distributed according to some probability measure $\nu$ and representing the entry point into the microstructure when the particle reaches the boundary for the $i$-th time, see Sections~\ref{sec:basicDef}  and~\ref{sec:rtr} for a precise description. Similar to the deterministic billiard case, where the invariant measure is of the form $d \bar{\mu}(r,\theta) = C\sin(\theta)\,dr\, d\theta$, the maps $\Psi_{R_i}$ preserve a measure $d\mu(\theta) = \frac{1}{2}\sin(\theta) d \theta$, for each $R_i$ (it is in fact the restriction of $\bar{\mu}$ to the angle component). Instead of modelling the random billiard system by a Markov chain, we carry out the analysis in terms of a random dynamical system (that is, a skew product) with expanding fiber maps $\Psi_{R_i}$. An orbit of such a random dynamical system has the form
	\begin{equation}\label{randomOrbit}
		\theta, \Psi_{R_1}(\theta), (\Psi_{R_2}\circ\Psi_{R_2})(\theta), (\Psi_{R_3}\circ\Psi_{R_2}\circ\Psi_{R_1})(\theta),\dots
	\end{equation}
	This orbit is the sequence of post-collision angles of the particle in this model. The random dynamical systems point of view allows us to consider a so-called averaged transfer operator $P: BV \rightarrow BV$ (analogue of the Markov operator for the involved Markov chain) on the space of functions of bounded variation $BV$. This space is convenient for proving
	the main goal in this paper, namely the
	statistical properties of the horizontal displacement of the particles in the tube.
	For this, we introduce the observable $X: (0,\pi) \rightarrow \R, X(\theta):= W/\tan(\theta)$, the (one-step) horizontal displacement. The main result of this paper is Theorem~\ref{thm:main}, which gives a Central Limit Theorem with nonstandard normalization of $X$. The nonstandard scaling comes from the fact that $X$ does not have a second moment, i.e., it is superdiffusive. Note that the expectation ${\E}_{\mu}(X) = 0$.
	
	\begin{thm}\label{thm:main}
		Write $S_n:=\sum_{i=1}^{n-1}X_i$ where $X_i:= X\circ (\Psi_{R_i} \circ \dots \circ \Psi_{R_0})$. Then $\frac{S_nX}{\sqrt{n\log n}}$ converges in distribution to a Gaussian random variable with mean zero and variance $W^2$ under the measure $\nu^{\otimes \Z}\otimes \mu$.
	\end{thm}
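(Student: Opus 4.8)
The plan is a Nagaev--Guivarc'h / Aaronson--Denker spectral argument for the twisted transfer operator, with the non-standard $\sqrt{n\log n}$ normalisation arising from a logarithmically divergent truncated variance, as in the infinite-horizon planar Lorentz gas. The first step is to reduce the statement to the spectral behaviour of the twisted operator. Since $\mu$ is $\Psi_R$-invariant for every $R$ and $\nu^{\otimes\Z}$ integrates out the random parameters, the characteristic function of $S_n$ is, up to a harmless shift of index, $\E_{\nu^{\otimes\Z}\otimes\mu}(e^{i\alpha S_n})=\int_0^\pi \twistedtransfer^{\,n-1}\1\,d\mu$, where $\twistedtransfer h:=P(e^{i\alpha X}h)$ and $P\colon BV\to BV$ is the averaged transfer operator. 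Because $P$ has a spectral gap on $BV$ (simple leading eigenvalue $1$ with eigenfunction $\1$, the rest of the spectrum inside a disc of radius $\kappa<1$), the target is a perturbed decomposition $\twistedtransfer=\lambda(\alpha)\Pi_\alpha+N_\alpha$ valid for $|\alpha|$ small, with $\lambda(\alpha)\to1$, $\Pi_\alpha\to\int(\cdot)\,d\mu$ and $\sup_n\|N_\alpha^n\|\le C\kappa'^{\,n}$ uniformly; one then only has to control $\lambda(\alpha_n)^{\,n-1}$ for $\alpha_n:=t/\sqrt{n\log n}$ together with the error made in the last reduction step.

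The technical obstacle in setting this up is that $X(\theta)=W/\tan\theta$ blows up at the grazing angles $\theta\to 0,\pi$, so $e^{i\alpha X}$ has infinite variation; a priori $\twistedtransfer$ does not map $BV$ to $BV$, and $\alpha\mapsto\twistedtransfer$ is not norm-continuous. The plan is to truncate $X$ at level $\asymp|\alpha|^{-1}$, using the dyadic decomposition of a neighbourhood of the grazing set into the pieces $\boxAk$ on which $|X|\asymp 2^k$: writing $e^{i\alpha X}-1=\sum_k(e^{i\alpha X}-1)\1_{\boxAk}$, the scales with $2^k\lesssim|\alpha|^{-1}$ yield a multiplier of variation $O(1)$ uniformly in $\alpha$, while the remaining scales contribute only $O(\mu(|X|>|\alpha|^{-1}))=O(\alpha^2)$ in the $BV\to L^1$ norm and can be discarded. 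Combined with Lasota--Yorke inequalities uniform in $\alpha$ for the truncated twisted operator (the $L^1$-contraction is automatic, $|e^{i\alpha X}|\equiv1$ gives $|\twistedtransfer^n f|\le P^n|f|$ pointwise, and the $BV$-part follows from the uniform expansion and distortion bounds for the $\Psi_R$ once the multiplier has bounded variation) and the estimate $\|\twistedtransfer-P\|_{BV\to L^1}=O(|\alpha|)$, the Keller--Liverani perturbation theorem produces the decomposition above. At the end one transfers back to the untruncated sum via $\E|S_n-\widetilde S_n|\lesssim n\,\E_\mu(|X|\1_{|X|>|\alpha_n|^{-1}})=O(n|\alpha_n|)=o(\sqrt{n\log n})$ and $|e^{i\alpha_n S_n}-e^{i\alpha_n\widetilde S_n}|\le|\alpha_n|\,|S_n-\widetilde S_n|$.

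The crux is the asymptotics of $1-\lambda(\alpha)$ as $\alpha\to 0$. Normalising the leading eigenfunction $h_\alpha$ by $\int h_\alpha\,d\mu=1$ (so $h_0=\1$), one has $\lambda(\alpha)=\int e^{i\alpha X}h_\alpha\,d\mu$, and a second-order expansion gives $1-\lambda(\alpha)=\tfrac12\alpha^2\,\Sigma^2(\alpha)(1+o(1))$ with $\Sigma^2(\alpha)=\E_\mu(X_\alpha^2)+2\sum_{k\ge1}\E_\mu(X_\alpha\cdot X_\alpha\circ\Phi^k)$, where $X_\alpha$ is $X$ truncated at level $\asymp|\alpha|^{-1}$ and $\Phi$ denotes the annealed dynamics (the correlations being really those of the skew product). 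The linear term vanishes because $\E_\mu(X)=0$ — equivalently, $\theta\mapsto\pi-\theta$ preserves $\mu$ and flips $X$. The diagonal term is elementary: since $d\mu=\tfrac12\sin\theta\,d\theta$ and $X\sim W/\theta$ near $0$ and $\sim W/(\theta-\pi)$ near $\pi$, one gets $\E_\mu(X_\alpha^2)=W^2\log(1/|\alpha|)(1+o(1))$, each grazing end contributing $\tfrac{W^2}{2}\log(1/|\alpha|)$. The hard part — the step I expect to be the main obstacle — is showing that the correlation sum contributes an asymptotically equal amount, $2\sum_{k\ge1}\E_\mu(X_\alpha\cdot X_\alpha\circ\Phi^k)=W^2\log(1/|\alpha|)(1+o(1))$: this requires localising by scale via the $\boxAk$ and exploiting the explicit behaviour of the maps $\Psi_R$ near grazing to show that a nearly-grazing collision is, with the relevant probability, followed by a collision of comparable grazing and of the same horizontal sign (the analogue of a Lorentz-gas corridor), so that the effective variance is exactly twice the single-step truncated variance. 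This is precisely what promotes the naive scaling constant $W^2/2$ to the claimed $W^2$; granting it, $1-\lambda(\alpha)=W^2\alpha^2\,|\log|\alpha||\,(1+o(1))$.

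With $\alpha_n=t/\sqrt{n\log n}$ one has $|\log|\alpha_n||=\tfrac12\log n+O(\log\log n)$, hence $(n-1)(1-\lambda(\alpha_n))=(n-1)\,W^2\,\tfrac{t^2}{n\log n}\cdot\tfrac12\log n\,(1+o(1))\to \tfrac12 W^2t^2$ and therefore $\lambda(\alpha_n)^{\,n-1}\to e^{-W^2t^2/2}$. Since $\Pi_{\alpha_n}\1\to\1$ in $BV$, $\int h_{\alpha_n}\,d\mu=1$ and $\|N_{\alpha_n}^{\,n-1}\|\to0$, we get $\E_{\nu^{\otimes\Z}\otimes\mu}(e^{i\alpha_n S_n})=\lambda(\alpha_n)^{\,n-1}\int\Pi_{\alpha_n}\1\,d\mu+\int N_{\alpha_n}^{\,n-1}\1\,d\mu\to e^{-W^2t^2/2}$, the characteristic function of a centred Gaussian with variance $W^2$; combined with the truncation estimate from the second paragraph and Lévy's continuity theorem this proves the theorem.
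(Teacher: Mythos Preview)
Your overall spectral strategy (Nagaev--Guivarc'h on the averaged twisted operator, Lasota--Yorke, eigenvalue asymptotics, L\'evy continuity) is exactly the paper's route. However, two of your key technical claims diverge from what actually holds in this model.

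First, your worry that $e^{i\alpha X}$ has infinite variation and hence $\widehat\cL_\alpha$ might not act on $BV$ is handled differently in the paper, and more simply: Proposition~\ref{prop:cont} shows directly that $\|(P_t-P)f\|_{BV}\le C|t|\,\|f\|_{BV}$, i.e.\ genuine norm-continuity on $BV$, with no truncation or Keller--Liverani machinery needed. The crucial ingredient is the \emph{averaging} over $R_i$: for fixed $R_i$ the estimate indeed blows up as $\sin\theta\to 0$ (this is computed explicitly in Section~\ref{sec:EstNoAv}), but integrating over $R_i$ introduces a small factor $\nu(p(J_{\xi,\ell},\tilde\theta))\lesssim W/\xi^2$ (Lemma~\ref{lem:p}) that makes all the sums converge. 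So the detour through truncation is unnecessary here.

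Second, and this is the genuine gap: your assertion that the correlation sum contributes another $W^2\log(1/|\alpha|)$ at leading order---the ``corridor'' doubling you import from the infinite-horizon Lorentz gas---is \emph{false} for this model. The paper's Lemma~\ref{lem:lam} computes $1-\lambda_t=\int(1-e^{itX})\,d\mu+\int(1-e^{itX})(v_t-v_0)\,d\mu$; the first integral gives $\tfrac{W^2}{2}t^2\log(1/|t|)(1+o(1))$ from the tail of $X$ alone (Lemma~\ref{lem:tailX}), while the second is $O(t^2)$, strictly lower order, because $\|v_t-v_0\|_{BV}=O(|t|)$ from the strong continuity above and $X\in L^1(\mu)$ (indeed $\int|X|\,d\mu=\tfrac{W}{2}\int_0^\pi|\cos\theta|\,d\theta<\infty$). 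There is no corridor mechanism: the randomisation of the entrance point into each microstructure decorrelates successive displacements, so a near-grazing exit is \emph{not} followed, with the relevant probability, by a displacement of the same sign and comparable size. The step you flag as ``the main obstacle'' is therefore not merely hard but wrong; the leading eigenvalue asymptotic comes entirely from the i.i.d.-like diagonal term $\int(1-e^{itX})\,d\mu$.
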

	
	A direct consequence of Theorem~\ref{thm:main} is that the horizontal position of a particle in the long run will not be influenced by the specific geometric parameters of the microstructures, such as the curvature, but only by the width of the tube $W$. A technical reason for this is that the variance of the Gaussian only depends on the tail $\mu(|X|>t)$, see Sections~\ref{subsec:ev} and~\ref{subsec:ev2}.
	Here the tancency of the microstructures to the tube boundary from assumption (M2) in Section~\ref{sec:micro} is crucial.
	Since the invariant measure $\mu$ does not depend on the specific shape of the microstructure, other than by the curvature being positive\footnote{If the curvature of all the curves that make up the boundary of a microstructure is $0$, some different, atomic, invariant measures are possible. See for example~\cite{FE}.}, and since $X$ does not depend on the microstructures, there will be no geometric parameters of the microstructure in the expression of this tail, see
	Lemma~\ref{lem:tailX}.
	
	There is still some effect on of the specific parameters of the microstructures on the particle movement, but this is only visible in the sequence of post-collision angles \eqref{randomOrbit}. As a consequence of the Lasota-Yorke inequality Proposition~\ref{prop:ly}, this sequence is mixing exponentially on average, that is
	$$\left|\int_{\left[0,1\right]^n}\mu(\Psi_{R_n}\circ \dots \circ \Psi_{R_1}(A) \cap B) - \mu(A)\mu(B) d(R_1, \dots, R_n) \right| \leq C\alpha^{n},$$
	where
	$\alpha \leq \frac{1}{3} + \frac{C}{\kappa_{min}},$
	and where $\kappa_{min}$ is the minimal curvature on the boundary of a microstructure. Thus, the larger the minimal curvature, the stronger the decay of correlations for the angle process.
	We note that small changes of the shape of a microstructures such as introducing points with zero curvature can change the mixing rate (for example make it polynomial), see for instance~\cite{CZ} and~\cite{Z} where this is considered for scatterers in deterministic billiards. The type of limit law, however, is expected to remain the same, see~\cite{NZ18}.

	{\bf Main ingredients of the proofs.}
	The proof of Theorem~\ref{thm:main} relies on  the use of the  Nagaev-Guivarc’h spectral method
	to the type of randomness considered here, as previously described.
	We consider the transfer operator of the fiber maps $\Psi_{R_i}$ (see formula~\eqref{eq:PR}) and using the measure $\nu$ according to which the random variables $R_i$ are distributed  (see again Section~\ref{sec:rtr}), we define the average transfer operator $P$ of the random dynamical system (see formula~\eqref{avTran}). For the transfer operator
	$P$, along with its perturbation $P_tf= P(e^{itX}f)$, we obtain a spectral gap in $BV$. We also obtain a required good continuity estimate (in $t$) for $\|P_t-P\|_{BV}$.
	For both ingredients, spectral gap and continuity, we will heavily use  the type of randomization described above, by which the random dynamical system is defined.\\

	{\bf Structure of the paper.} In Section~\ref{sec:basicDef} we give a precise description of the model and record
	some needed technical results. In Section~\ref{sec:rtr} we discuss the randomization, and the form of the
	transfer operator that comes with it. Section~\ref{sec:variation} contains the main variation estimates of the map
	$\Psi_{R_i}$. In Section~\ref{sec:continuity} we give the continuity estimates for the transfer operator perturbed with
	the displacement function $X$, as needed for the Nagaev method. The spectral decomposition (in
	$BV$) of the averaged operator, along with Lasota-Yorke inequalities, is obtained in Section~\ref{sec:LY}. The Lasota-Yorke inequality is done after the continuity estimate since the former will have similar but easier estimates than the latter. In Section~\ref{sec:CLT} we complete the proof of Theorem~\ref{thm:main}.
	
	\medskip
	
	{\bf Acknowledgement.} We are grateful for the diligence of the referees that helped us to improve clarity and accuracy of our paper.
	
	\section{Definition of model and basic calculations} \label{sec:basicDef}
	
	\subsection{The tube}\label{sec:tube}
	The {\em tube} is a bi-infinite strip $\R \times [0,W]$, bounded by two horizontal lines
	$\ell_0 = \R \times \{ 0 \}$ and $\ell_W = \R \times \{ W \}$ for some large $W > 0$.
	A particle moves with constant speed back and forth between $\ell_0$ and $\ell_W$.
	When the particle reaches $\ell_0$ or $\ell_W$ for the $i$-th time,
	it enters a microstructure $\mic_i$. It will bounce a {\bf bounded} number of times inside $\mic_i$ before exiting and moving to the other side of the tube gaining a horizontal displacement $X_i = W/\tan\theta_{i}^{\text{out}}$, see Figure~\ref{fig:tube}.

	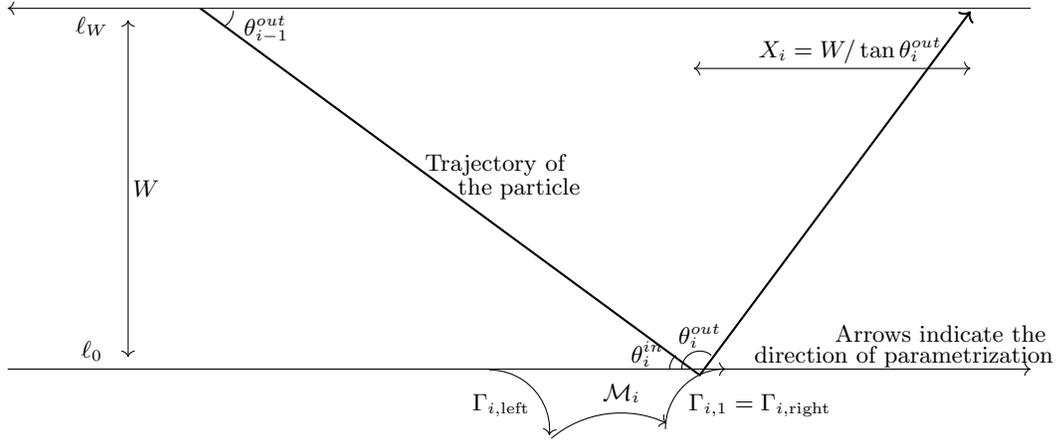
\begin{figure}[ht]
		\begin{center}
			\begin{tikzpicture}[scale=0.8]
				\draw[-] (-0.4, 6.7) arc (300:360:0.3); \node at (0.3, 6.65) {\small $\thetaout_{i-1}$};
				\draw[-] (7, 1) arc (180:130:0.3); \node at (6.63, 1.25) {\small $\thetain_i$};
				\draw[-] (7.2, 1) arc (180:50:0.3); \node at (7.5, 1.55) {\small $\thetaout_i$};
				\draw[->] (-4,1) -- (13,1); \node at (-2.6, 1.3) {\small $\ell_0$};
				\node at (-2.6, 6.7) {\small $\ell_{W}$};
				\draw[<-] (-4,7) -- (13,7);
				\draw[<->] (-2, 1.2) -- (-2,6.8); \node at (-1.7, 4) {\small $W$};
				\draw[<->] (7.4, 6) -- (12,6); \node at (10, 6.3) {\small $X_i = W/\tan \thetaout_i$};
				\node at (6.2, 0.6)  {\small $\mic_i$};
				\draw[->, thick] (-0.8,7) --  (7.5, 0.9) -- (12,6.95);
				\draw[<-] (5, -0.1) arc (-5:90:1); \node at (4.2, 0.4) {\small $\Gamma_i^{\text{left}}$};
				\draw[<-] (7.93, 1) arc (90:180:1); \node at (8.5, 0.4) {\small $\Gamma_{i,1} = \Gamma_i^{\text{right}}$};
				\draw[<-] (6.95, 0.1) arc (65:130:1.8);
				\node at (11.5, 1.6) {\small Arrows indicate the};
				\node at (10.9, 1.2) {\small direction of parametrization};
				\node at (4.1, 4.4) {\small Trajectory of};
				\node at (4.5, 4) {\small the particle};
			\end{tikzpicture}
			\caption{The tube with a piece of trajectory and microstructure $\mic_i$.}\label{fig:tube}
		\end{center}
	\end{figure}
	
	\begin{rmk}\label{rmk:angles}
		All angles will be between $\ell_0$ or $\ell_W$ (for $\thetaout_i$)  or tangent lines at collision points (for $\theta_{i,j}$)  and the outgoing
		trajectory.
		We use the direction of the parametrization, as in the Chernov \& Markarian book \cite{CM}, except that they use angles with the normal vectors, rather than with tangent lines, see Figure~\ref{fig:conv}.
		This convention means that $\ell_0$ is parametrized left-to-right and $\ell_W$ is parametrized right-to-left, as shown already in Figure~\ref{fig:tube}.
	\end{rmk}

	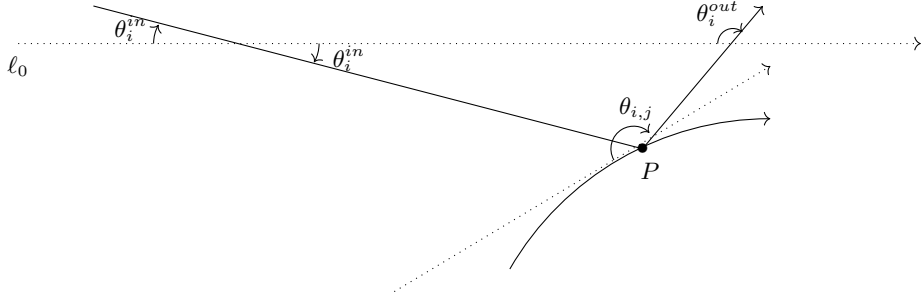
\begin{figure}[ht]
		\begin{center}
			\begin{tikzpicture}[scale=1]
				\draw[->] (-1,5.5) -- (6.3, 3.6) -- (7.9,5.5);
				\draw[<-] (8, 4) arc (90:150:4);
				\draw[->, dotted] (-2,5) -- (10,5);
				\draw[->, dotted] (3,1.7) -- (8,4.7);
				\node at (-2, 4.7) {\small $\ell_0$};
				\draw[->] (-0.2, 5) arc (180:150:0.5); \node at (-0.5, 5.2) {\small $\thetain_i$};
				\draw[->] (2, 5) arc (360:330:0.5); \node at (2.4, 4.8) {\small $\thetain_i$};
				\draw[->] (7.3, 5) arc (180:50:0.2); \node at (7.3, 5.4) {\small $\thetaout_i$};
				\draw[->] (5.92, 3.45) arc (210:45:0.3); \node at (6.22, 4.13)  {\small $\theta_{i,j}$};
				\node at (6.3, 3.6) {\small $\bullet$};   \node at (6.4,3.3) {\small $P$};
			\end{tikzpicture}
			\caption{Convention for the angles $\thetaout_i, \thetain_i$ and $\theta_{i,j}$.}\label{fig:conv}
		\end{center}
	\end{figure}

	Let us denote by $h_i$ the map that assigns 
	the $i$-th entrance coordinates $(\rin_i,\thetain_i)$ to the previous exit coordinates $(\rout_{i-1}, \thetaout_{i-1})$.
	The arrows of $\ell_0$ and $\ell_W$ as depicted in Figure~\ref{fig:tube} follow this convention for the tube, but for the open side of the microstructures $\mic_i$, the arrow should be reversed.
	The map $h_i$ therefore has the form
	\begin{equation}\label{eq:h}
		\begin{cases}
			h_i(\rout_{i-1}, \thetaout_{i-1})
			= (\rin_i, \thetain_i) = (\rout_{i-1} - W/\tan \thetaout_{i-1}, \thetaout_{i-1})\\[2mm]
			Dh_i = 
			\begin{pmatrix}
				1 & W/\sin^2 \thetaout_{i-1}, \\
				0 & 1
			\end{pmatrix}.	
		\end{cases}
	\end{equation}
	Thus $h_i$ represents the flight from $\ell_W$ to $\ell_0$ (or from $\ell_0$ back to $\ell_W$), and this flight has length $\tau_i = W/\sin \thetaout_{i-1}$.
	The term $W/\sin^2 \thetaout_{i-1}$ in $Dh_i$ in \eqref{eq:h} expresses the effect of a change in $\thetaout_{i-1}$
	on the horizontal displacement
	at the entrance on the other side of the tube; it depends on $W$ and even if it has no effect on $\thetain_i$, it will have an important effect on the next collision points and angles.

	\subsection{The microstructures}\label{sec:micro}
	
	Here we describe the shape of the microstructures and some properties of the trajectory of a particle within a microstructure. In Section~\ref{randomization} we explain how the position of a microstructure on the boundary of the tube is determined, given a random value $R_i$. This will also determine how a particle enters a microstructure. The microstructure $\mic_i$ entered at the $i$-th visit to the boundary of the tube is an area of unit length, with a boundary $\partial \mic_i$ made up of a finite number of smooth convex curves
	$$\partial \mic_i := \Gamma_{i}^{0} \cup \Gamma_i^{\text{left}} \cup \Gamma_i^1 \cup \dots \cup \Gamma_i^{k} \cup \Gamma_i^{\text{right}},$$
	for some fixed $k$. Here $\Gamma_{i}^{0}$ is the open side of $\mic_i$, so $\Gamma_{i}^{0}$ belongs to $\ell_0$ and $\ell_W$ alternately. We always set $\left|\Gamma_{i}^{0}\right| = 1$. Furthermore, $\Gamma_i^{\text{left}}$ and $\Gamma_i^{\text{right}}$ are the left and right curve of the microstructure directly adjacent to the tube boundary, called the \textit{left cheek} and \textit{right cheek} respectively. When we consider a trajectory inside $\mic_i$, we will only be interested in the boundary curves with which the particle has a collision. Given a trajectory with $n_i=n(\rin_i,\thetain_i)$ collisions with $\partial \mic_i$ during the $i$-th visit to a microstructure, we will denote the boundary curve at the $j$-th collision in $\mic_i$ by $\Gamma_{i,j}$, so with a subscript instead of a superscript, and where $j = 0, \dots, n_i$. Thus, during the trajectory in $\mic_i$ the particle starts at the open side $\Gamma_{i,0} = \Gamma_i^{0}$, then particle collides with $\Gamma_{i,1}$, next with $\Gamma_{i,2}$, etc.
	The lengths $|\Gamma_{i,0}|=1$.
	We use coordinates $(\rin_i, \thetain_i) \in \R \times (0,\pi)$,
	$i \in \Z$, for the entrance of the trajectory at the open side $\Gamma_{i,0}$ of $\mic_i$,
	and $(\rout_i, \thetaout_i)$ for the exit coordinates at $\Gamma_{i,0}$.
	
	In accordance with Remark~\ref{rmk:angles}, the boundary pieces are parametrized by $r$  oriented in such a way that $\mic_i$ is always to the left of the positively oriented tangent vector.
	
	Let $(r_{i,j}, \theta_{i,j})$ indicate the position and outgoing angle at the $j$-th collision with $\partial \mic_i$.
	The angle $\theta_{i,j}$ is measured with respect to the tangent line at the collision point $\Gamma_{i,j}(r_{i,j})$ following the convention of Remark~\ref{rmk:angles}, so
	$\theta_{i,j} \in [0,\pi]$, with grazing collisions for $\theta_{i,j} = 0$ or $\pi$.
	Note that
	\begin{equation}\label{eq:angles}
		(\rin_i,\thetain_i) = (r_{i,0}, \theta_{i,0}) \quad \text{ and } \quad 
		(\rout_i,\thetaout_i) = (r_{i,n_i}, \theta_{i,n_i}).
	\end{equation}
	The curvatures of $\partial \mic_i$ at the collision points $\Gamma_{i,j}(r_{i,j})$ are denoted as $\kappa_{i,j}$. The ``open'' side  $\Gamma_{i,0}$ of $\partial \mic_i$ is a straight arc, so $\kappa_{i,0} = \kappa_{i,n_i} = 0$.
	
	We assume that all microstructures have the same shape and size, and satisfy the following general conditions.
	
	\begin{enumerate}
		\item[(M1)] All $\Gamma_i^j$ are convex and there exist $0 < \kappa_{\min} \leq \kappa_{\max} < \infty$ such that the curvature
		$r \mapsto \kappa_i^j(r)$ are piecewise monotone functions bounded between $\kappa_{\min}$ and $\kappa_{\max}$. 
		Here $r$ is the parameter parametrizing $\Gamma_i^j$.
		\item[(M2)] The curves $\Gamma_{i}^{\text{left}}$ and $\Gamma_{i,}^{\text{right}}$ that are adjacent to
		the open side $\Gamma_{i,0}$ of $\mic_i$ are circle segments {\bf tangent} to the boundary of the tube, that is, at the intersection points, the tangent lines of $\Gamma_{i}^{\text{left}}$ and $\Gamma_{i,}^{\text{right}}$ are horizontal.
		\item[(M3)] There is $\gamma_0 > 0$ such that the angle $\gamma$ between any neighbouring pair of curves (other than $\Gamma_{i,0}$) lies in $(\gamma_0, \pi - \gamma_0)$.
		\item[(M4)] There is $\alpha_0 < \frac{\pi}{2}$ such that the normal vectors on $\Gamma_i$ pointing toward the tube have an angle $\alpha \in [-\alpha_0,\alpha_0]$ with the vertical direction pointing towards the tube.
	\end{enumerate}
	Assumption (M1) makes the fiber maps $\Psi_{R_i}$ expanding, which will be important for the proofs in the later sections.
	The tangency from assumption (M2) is crucial for obtaining the tail estimates $\mu(|X|> t)$ that underlie the superdiffusion, i.e., the need of nonstandard scaling in the Central Limit Theorem. Furthermore, this assumption gives a more predictable bouncing of particles hitting the boundary of the tube with small angles, not unlike the skipping stones over the surface of the water.
	Some parts of these assumption (i.e., that the curvature is uniformly bounded and bounded away from $0$ and that $\Gamma_i^{\text{left}}$
	and $\Gamma_i^{\text{right}}$ are circle segments) are somewhat artificial and probably needlessly strong, but they
	much reduce the technicalities of the proofs, as needed in for example Section~\ref{sec:variation}.
	
	Assumptions (M3) and (M4) imply that there is a uniform upper bound $N$ of the number of collisions $n_i$ that a particle can have before exiting $\mic_i$, see Lemma~\ref{lem:finite} below.
	Assumption (M3) prevents that $\partial \mic_i$ has cusps (essential for establishing the bound $N$), and it also prevents that there are two grazing collisions
	(i.e., $\sin \theta_{i,j-1} = \sin \theta_{i,j} = 0$) with arbitrarily small intermediate flight time. A somewhat more technical version of this fact is stated in the next lemma; it is used this way later in Lemma~\ref{lem:VarPsi}. 
	
	\begin{lemma}\label{lem:triangle}
		Under Assumption (M3) above, there is
		$K = K(\gamma_0) > 0$ such that
		such that
		\begin{equation}\label{eq:eta}
			\inf_{i,j} \tau_{i,j} \kappa_{i,j}\kappa_{i,j-1}
			+ \kappa_{i,j-1} \sin \theta_{i,j} + \kappa_{i,j} \sin \theta_{i,j-1} \geq K.
		\end{equation}
	\end{lemma}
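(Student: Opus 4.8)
Since curvatures and flight lengths are non-negative and $\theta_{i,j}\in[0,\pi]$, every summand on the left of \eqref{eq:eta} is non-negative, so the content of the lemma is a \emph{uniform} positive lower bound. As all microstructures are congruent and a trajectory makes at most $N$ collisions inside one of them (Lemma~\ref{lem:finite}), it is enough to fix a single microstructure $\mic$ and to bound below, for each ordered pair $(\Gamma,\Gamma')$ of curved boundary pieces of $\mic$ --- i.e.\ pieces other than the open side, so that $\kappa\ge\kappa_{\min}>0$ on them --- and for \emph{all} $x\in\Gamma$, $x'\in\Gamma'$, the quantity
\[
F(x,x'):=\tau\,\kappa(x)\,\kappa'(x')+\kappa(x)\sin\theta'+\kappa'(x')\sin\theta ,\qquad \tau:=|x-x'| ,
\]
where $\theta$ and $\theta'$ are the angles that the segment $\overline{xx'}$ makes with $\Gamma$ at $x$ and with $\Gamma'$ at $x'$. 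I then make two reductions. First, one may assume $\Gamma\neq\Gamma'$: a billiard segment cannot have both endpoints on the same convex (dispersing) wall, because the reflected ray at a point $p\in\Gamma$ lies, apart from $p$, strictly on the side of the tangent line $T_p$ facing the interior of $\mic$, while by convexity $\Gamma\setminus\{p\}$ lies on the other side of $T_p$. Second, fix a threshold $\tau_0>0$: if $\tau\ge\tau_0$ then $F\ge\tau_0\kappa_{\min}^2$, so it remains to treat $\tau<\tau_0$.

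In the short-flight regime I would distinguish whether $\Gamma$ and $\Gamma'$ are adjacent along $\partial\mic$. If they are not adjacent, then, $\partial\mic$ being a simple closed curve, they are disjoint compact arcs, so $\tau=|x-x'|\ge d_0:=\operatorname{dist}(\Gamma,\Gamma')>0$; taking $\tau_0$ below the minimum of the finitely many such $d_0$ rules this case out. Hence $\Gamma$ and $\Gamma'$ share a vertex $z$, and for $\tau_0$ small the condition $\tau<\tau_0$ forces both $x$ and $x'$ to be close to $z$ (the two arcs are uniformly separated away from $z$). Parametrizing by arclength, $x=z+\delta v^-+o(\delta)$ and $x'=z+\delta' v^++o(\delta')$ with $v^-,v^+$ the unit tangents at $z$ pointing into $\Gamma$ and $\Gamma'$, so that $\angle(v^-,v^+)=\gamma$ is the interior angle of $\mic$ at $z$ and $\gamma\in(\gamma_0,\pi-\gamma_0)$ by (M3). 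Along any subsequence with $\delta'/\delta\to\lambda\in[0,\infty]$, the direction of $\overline{xx'}$ converges to the unit vector along $\lambda v^+-v^-$ (to be read as $v^+$ when $\lambda=\infty$), and evaluating the two cross products gives
\[
\sin\theta+\sin\theta'\ \longrightarrow\ \frac{(\lambda+1)\,\sin\gamma}{|\lambda v^+-v^-|}\ \ge\ \sin\gamma\ \ge\ \sin\gamma_0 ,
\]
the first inequality amounting to $(\lambda+1)^2\ge\lambda^2-2\lambda\cos\gamma+1=|\lambda v^+-v^-|^2$, i.e.\ $\cos\gamma\ge-1$. Thus $\liminf_{(x,x')\to(z,z)}(\sin\theta+\sin\theta')\ge\sin\gamma_0$, and after shrinking $\tau_0$ (depending only on $\gamma_0$ and the fixed geometry of $\mic$) we obtain $\sin\theta+\sin\theta'\ge\tfrac12\sin\gamma_0$ whenever $\tau<\tau_0$, hence $F\ge\tfrac12\kappa_{\min}\sin\gamma_0$.

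Putting the two regimes together gives $F\ge K:=\min\bigl(\tau_0\kappa_{\min}^2,\ \tfrac12\kappa_{\min}\sin\gamma_0\bigr)>0$, which is the assertion. I expect the only real obstacle to be the vertex analysis above, that is, proving that a \emph{short} flight between two \emph{curved} walls can occur only when those walls are adjacent and both collision points lie near their common vertex --- it is precisely there that (M3) is used, to keep the two tangent directions from almost aligning; once this is secured, the lower bound on $\sin\theta+\sin\theta'$ is a one-line trigonometric estimate. A modest amount of bookkeeping is also needed to match the orientation conventions of Remark~\ref{rmk:angles} and the dispersing sign convention for the $\kappa_{i,j}$ with the displayed formula, but this affects neither the sines nor the structure of the argument.
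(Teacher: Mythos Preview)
Your proof is correct and follows essentially the same route as the paper: reduce to short flights, observe these must occur near a common vertex of two adjacent curved walls, and use (M3) to bound the collision angles away from grazing. The only cosmetic difference is in the vertex step: the paper forms the triangle with the two tangent lines and the flight segment to read off $\theta_{i,j-1}+\theta_{i,j}=\pi+\gamma'$ with $\gamma'\to\gamma\in(\gamma_0,\pi-\gamma_0)$, whereas you linearize at the vertex and compute $\sin\theta+\sin\theta'\to(\lambda+1)\sin\gamma/|\lambda v^+-v^-|\ge\sin\gamma_0$ directly; both arguments encode the same geometric constraint.
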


	\begin{figure}[ht]
		\begin{center}
			\begin{tikzpicture}[scale=2]
				\draw[-] (5,0) arc (30:80:3); \node at (3,1.2) {\small $\Gamma_{i,j-1}$};
				\draw[-] (5,0) arc (150:100:3); \node at (7,1.2) {\small $\Gamma_{i,j}$};
				\draw[-] (4.9, 0.17) arc (120:60:0.2); \node at (5,0.3) {\small $\gamma$};
				\draw[-] (4.82, 0.6) arc (125:55:0.2); \node at (5,0.75) {\small $\gamma'$};
				\node at (4.05,1) {\small $\bullet$};  \node at (4,0.8) {\small $P_{j-1}$};
				\node at (6.15,1.15) {\small $\bullet$};  \node at (6.2,0.9) {\small $P_j$};
				\node at (5,1.2) {\small $\tau_{i,j}$};
				\draw[-] (3,1.6)--(5.8,0);
				\draw[-] (7,1.6)--(4,0);
				\draw[->,thick] (6.15,1.15)--(4.95,0.5)--(4.05,1)--(6.15,1.15)--(6.7,1.8);
				\draw[-,thick] (3.6,1.7)--(4.05,1);
			\end{tikzpicture}
			\caption{The triangle with angles $\theta_{i,j-1}$, $\theta_{i,j}$ and $\gamma' > \gamma$, $\sin \gamma > \gamma_0$.}\label{fig:triangle}
		\end{center}
	\end{figure}
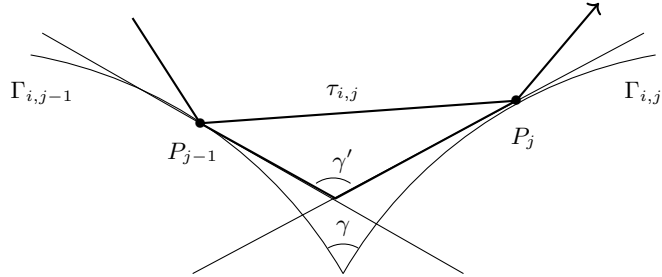

	\begin{proof}
		All curvatures $\kappa_{i,j}$ are bounded away from zero, so the three terms in \eqref{eq:eta} are non-negative. Thus it suffices to show that no more than two of these terms can be arbitrarily small.
		Assume that the $j$-th flight-time $\tau_{i,j}$ is very small. Then the $j-1$-st and $j$-th collision points $P_{j-1}$ ad $P_j$ are on neighbouring arcs of $\Gamma_i$.
		Consider the triangle as in Figure~\ref{fig:triangle}; its angles are $\pi-\theta_{i,j-1}$, $\pi-\theta_{i,j}$ and $\gamma'$ where $\gamma' > \gamma$, which is the angle between $\Gamma_{i,j-1}$ and $\Gamma_{i,j}$.
		This angle $\gamma' \to \gamma$ as $\tau_{i,j} \to 0$, so
		we can assume that $\gamma' \leq (\pi+\gamma)/2$
		for small $\tau_{i,j}$.
		Then $\theta_{i,j-1} + \theta_{i,j} = \pi + \gamma' \leq (3\pi+\gamma)/2$
		is bounded away from $2\pi$.
		Thus for small $\tau_{i,j}$, the angles
		$\theta_{i,j-1} $ and $\theta_{i,j}$ cannot be simultaneously close to $\pi$. The lemma follows from this.
	\end{proof}

	\begin{lemma}\label{lem:finite}
		Under (M2) and (M4), the number of collisions during a visit to a microstructure is bounded.
	\end{lemma}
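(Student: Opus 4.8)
The guiding idea is that assumption (M4) forces the vertical component of the velocity to increase monotonically along the orbit inside $\mic_i$, so the particle must leave after a controlled number of reflections. Choose coordinates so that the line carrying the open side $\Gamma_{i,0}$ is horizontal with $\mic_i$ below it, and let $\mathbf e=(0,1)$ point into the tube. For a point of $\partial\mic_i\setminus\Gamma_{i,0}$ let $\mathbf n$ be the unit normal pointing into $\mic_i$; since $\mic_i$ lies below the tube, $\mathbf n$ is the ``normal towards the tube'' of (M4), so $\mathbf n\cdot\mathbf e\ge\cos\alpha_0>0$ there. Write $v$ for the unit velocity and $v_y=v\cdot\mathbf e$. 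From the reflection law $v^{\mathrm{out}}=v^{\mathrm{in}}-2(v^{\mathrm{in}}\cdot\mathbf n)\mathbf n$, with $v^{\mathrm{in}}\cdot\mathbf n<0$ and $|v^{\mathrm{in}}\cdot\mathbf n|=\sin\theta_{i,j}$,
\[
 v_y^{\mathrm{out}}-v_y^{\mathrm{in}}=2|v^{\mathrm{in}}\cdot\mathbf n|\,(\mathbf n\cdot\mathbf e)\ \ge\ 2\cos\alpha_0\,\sin\theta_{i,j}\ >\ 0,
\]
so $v_y$ is constant along flights and strictly increases at each reflection. Moreover, once $v_y>\sin\alpha_0$ one has $|v_x|=\sqrt{1-v_y^2}<\cos\alpha_0$, and for every boundary normal $\mathbf n$ a one-line computation gives $v\cdot\mathbf n\ge v_y(\mathbf n\cdot\mathbf e)-\sqrt{(1-v_y^2)(1-(\mathbf n\cdot\mathbf e)^2)}>0$, the last inequality because $(\mathbf n\cdot\mathbf e)^2\ge\cos^2\alpha_0$ and $v_y^2>\sin^2\alpha_0$ force $(\mathbf n\cdot\mathbf e)^2+v_y^2>1$; hence the forward ray meets no wall and, $\mic_i$ being bounded, leaves through $\Gamma_{i,0}$. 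Thus the particle exits as soon as $v_y>\sin\alpha_0$, so $v_y\le\sin\alpha_0$ at every reflection bar possibly the last.

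Fix $\theta_0\in(0,\tfrac\pi2-\alpha_0)$ and call a reflection \emph{transversal} if $\theta_{i,j}\ge\theta_0$ and \emph{grazing} otherwise. Every transversal reflection except possibly the last raises $v_y$ by at least $2\cos\alpha_0\sin\theta_0$ while $v_y$ stays in $(-1,\sin\alpha_0]$, so there are at most $N_1:=1+\lceil 1/(\cos\alpha_0\sin\theta_0)\rceil$ of them. At a grazing reflection $v^{\mathrm{in}}$ and $v^{\mathrm{out}}$ make angle $<\theta_0$ with the tangent of the curve, which by (M4) makes angle $<\alpha_0$ with the horizontal; so there the velocity lies within $\alpha_0+\theta_0<\tfrac\pi2$ of the horizontal, whence $|v_x|\ge\cos(\alpha_0+\theta_0)=:c_*>0$, and since $v$ turns by only $2\theta_0$ at such a reflection the sign of $v_x$ is preserved. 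Hence along a maximal block of consecutive grazing reflections the sign of $v_x$ is constant and the particle's $x$–coordinate is monotone. Consecutive reflections lie on distinct boundary pieces (a chord of a dispersing convex piece cannot run inside $\mic_i$), and for two consecutive grazing reflections on neighbouring pieces \cref{lem:triangle} bounds the intervening flight length $\tau_{i,j}$ below by some $\tau_*=\tau_*(\gamma_0,\kappa_{\min},\kappa_{\max},\theta_0)>0$; being almost horizontal, such a flight advances $x$ by at least $c_*\tau_*$ in a fixed direction. Since $\mic_i$ has bounded diameter $D_0$, a grazing block of length $m$ obeys $(m-1)c_*\tau_*\le D_0$, and blocks number at most $N_1+1$; so grazing reflections number at most $N_2:=(N_1+1)\bigl(1+D_0/(c_*\tau_*)\bigr)$, and every visit to $\mic_i$ involves at most $N:=N_1+N_2$ reflections, uniformly in $i$ and in the entry data.

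I expect the only delicate point to be the flight-length lower bound for consecutive grazing reflections. This is exactly where the no-cusp property (M3), via \cref{lem:triangle}, and the tangent dispersing cheeks (M2) enter: near $\Gamma_{i,0}$ a near-tangential trajectory ``skips off'' quickly rather than lingering. The case of consecutive grazing reflections on non-neighbouring pieces, which \cref{lem:triangle} does not cover directly, must be handled separately using the bounded geometry of $\mic_i$; everything else is soft, and the resulting $N$ depends only on $\alpha_0,\gamma_0,\kappa_{\min},\kappa_{\max}$, not on $k$ or the precise shapes of the pieces.
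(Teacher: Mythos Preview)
Your argument is correct and rests on the same mechanism as the paper's: (M4) forces the vertical velocity component to increase at every reflection (the paper phrases this as the angle $\psi_{i,j}$ of the outgoing trajectory with the horizontal decreasing). The implementations differ. The paper splits by whether the normal leans left or right of vertical and, once the trajectory points back toward the tube, invokes time-reversal; your exit criterion $v_y>\sin\alpha_0\Rightarrow v\cdot\mathbf n>0$ at every wall point is more direct. For near-grazing collisions, where $v_y$ barely increases, the paper appeals to Lemma~\ref{lem:triangle} and asserts in a sentence that this bounds their number; your horizontal-displacement argument for a grazing block makes that step explicit and quantitative. Your flagged worry about consecutive grazing reflections on non-neighbouring pieces is not a real gap: Lemma~\ref{lem:triangle} is stated unconditionally, and its proof already observes that small $\tau_{i,j}$ forces the collisions onto neighbouring arcs, so for non-neighbouring arcs $\tau_{i,j}$ is automatically bounded below by the finite geometry. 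One small fix: to guarantee the sign of $v_x$ survives a grazing reflection you need $\theta_0$ small enough that $2\sin\theta_0\sin\alpha_0<\cos(\alpha_0+\theta_0)$, not merely $\theta_0<\tfrac{\pi}{2}-\alpha_0$. Finally, both your proof and the paper's actually use (M3) via Lemma~\ref{lem:triangle}, consistent with the sentence preceding the lemma; the hypothesis ``(M2)'' in the lemma statement appears to be a typo for ``(M3)''.
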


	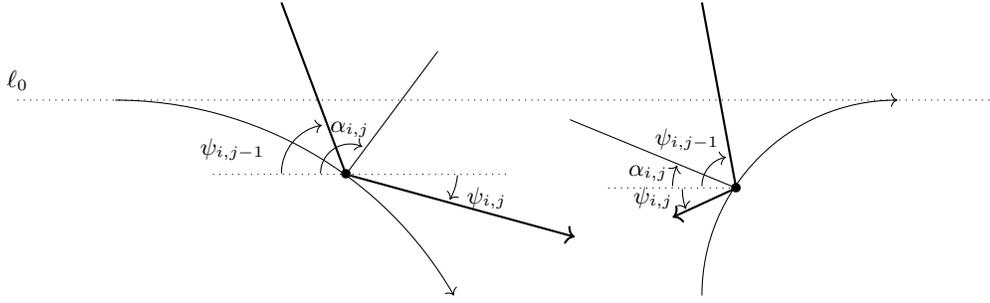
\begin{figure}[ht]
		\begin{center}
			\begin{tikzpicture}[scale=1.3]
				\draw[->, thick] (1.7,5)--(2.36,3.24)--(4.7, 2.6);
				\draw[->, thick] (6,5)--(6.35, 3.1)--(5.7, 2.8);
				\draw[<-] (8, 4) arc (90:180:2);
				\draw[-] (6.35, 3.1) -- (4.65,3.8);
				\draw[->] (0, 4) arc (90:30:4);
				\draw[-]  (2.36,3.24) -- (3.3,4.5);
				\draw[-, dotted] (-1,4) -- (9,4);
				\node at (-1, 4.2) {\small $\ell_0$};
				\draw[->] (2.1, 3.25) arc (180:65:0.3); \node at (2.4, 3.7) {\small $\alpha_{i,j}$};
				\draw[->] (1.7, 3.25) arc (180:100:0.5); \node at (1.2, 3.5) {\small $\psi_{i,j-1}$};
				\draw[->] (3.5, 3.23) arc (355:330:0.6); \node at (3.8, 3)  {\small $\psi_{i,j}$};
				\draw[->] (6, 3.12) arc (180:100:0.3); \node at (5.85, 3.6) {\small $\psi_{i,j-1}$};
				\draw[->] (5.7, 3.12) arc (180:155:0.5); \node at (5.45, 3.25) {\small $\alpha_{i,j}$};
				\draw[->] (5.8, 3.09) arc (180:200:0.6); \node at (5.5, 3)  {\small $\psi_{i,j}$};
				\node at (6.35, 3.1) {\small $\bullet$};
				\draw[-, dotted](6.35, 3.1)--(5,3.1);
				\node at (2.36,3.24) {\small $\bullet$};
				\draw[-, dotted](1, 3.24)--(4,3.24);
			\end{tikzpicture}
			\caption{Rules 1.\ and 2.\ in the proof of Lemma~\ref{lem:finite}.}\label{fig:finite}
		\end{center}
	\end{figure}

	\begin{proof}
		Condition (M2) excludes cusps, so there are $\eps>0$ and $\delta>0$ such that if $\tau_{i,j} < \eps$, then the angle between
		the tangent lines at collision point $r_{i,j-1}$ and $r_{i,j}$ is at least $\delta$.
		This prevents a trajectory from having many consecutive collisions near a single corner point.
		
		Let $\psi_{i,j}$ be the angle of the outgoing trajectory from collision point $r_{i,j}$ with the horizontal.
		We will assume that $\cos \psi_{i,j} \geq 0$; otherwise we can look at the image under left-to-right reflection and get the same result.
		Also assume that $\sin \psi_{i,j} > 0$, so the trajectory points away from the tube.
		
		Let $\alpha_{i,j}$ be the angle of the normal vector at
		$r_{i,j}$ with the horizontal.
		By assumption, $\alpha_{i,j} \in (0,\pi)$.
		Recalling (M4), we have two cases, see Figure~\ref{fig:finite}:
		
		\begin{enumerate}
			\item If $\alpha_{i,j} \in [\frac{\pi}{2},\frac{\pi}{2}+ \alpha_0]$,
			then $\psi_{i,j-1} \geq \alpha_{i,j} - \frac{\pi}{2}$. Therefore
			$$
			\psi_{i,j} = \psi_{i,j-1} + 2(\alpha_{i,j}-\psi_{i,j-1}) - \pi \leq \psi_{i,j-1} + 2\frac{\pi}{2} - \pi = \psi_{i,j-1}.
			$$
			This can happen without $\psi_{i,j-1}$ actually decreasing, namely at a grazing collision when $\psi_{i,j-1} = \alpha_{i,j} - \frac{\pi}{2}$. But according to Lemma~\ref{lem:triangle}, consecutive (almost) grazing collisions can only occur with a definite distance in between, so after a bounded number of collisions, $\psi_{i,j-1}$
			becomes negative, or the other case occurs.
			\item If $\alpha_{i,j} \in [\frac{\pi}{2}-\alpha_0,\frac{\pi}{2})$,
			then
			$$
			\psi_{i,j} = - \left( \psi_{i,j-1} - 2(\psi_{i,j-1} - \alpha_{i,j}) \right)
			= \psi_{i,j-1} - 2\alpha_{i,j} \leq \psi_{i,j-1} - (\pi-2\alpha_0).
			$$
		\end{enumerate}
		This means that starting with $\psi_{i,0} \in (0,\pi)$,
		$\psi_{i,j}$ will decrease with $j$, until,
		after a bounded number of steps, $\psi_{i,j} < 0$ and
		the trajectory will move towards the tube again.
		From this point onward, we can use rules for the time-reversed trajectory.
		This leads to the claimed bounded number of collisions.
	\end{proof}

	\subsection{Collision maps and their derivatives}\label{sec:derivatives}
	
	We define a two-dimensional map
	\begin{equation}
		\Psi^i = (\Psi^i_r,\Psi^i_\theta):(\rout_{i-1}, \thetaout_{i-1}) \mapsto (\rout_i, \thetaout_i),
		\label{twoDimPsi}
	\end{equation}
	mapping the exit coordinates of the previous microstructure $\mic_{i-1}$
	to the exit coordinates of the current microstructure $\mic_i$.

	Let $F_{i,j}:(r_{i,j-1}, \theta_{i,j-1}) \mapsto
	(r_{i,j}, \theta_{i,j})$
	be the collision map in $\mic_i$ and
	$\tau_{i,j}$ the lengths of the flights involved.
	By (2.26) from the Chernov \& Markarian book \cite{CM}, we have
	\begin{equation}\label{eq:DF}
		DF_{i,j}
		= \frac{-1}{\sin \theta_{i,j}}
		\begin{pmatrix}
			\tau_{i,j} \kappa_{i,j-1} + \sin \theta_{i,j-1} & \tau_{i,j} \\
			\tau_{i,j}\kappa_{i,j}\kappa_{i,j-1} + \kappa_{i,j-1} \sin \theta_{i,j} + \kappa_{i,j} \sin \theta_{i,j-1} & \tau_{i,j} \kappa_{i,j} + \sin \theta_{i,j}
		\end{pmatrix},
	\end{equation}
	so apart from the initial minus sign, all the entries are positive. That is, $\theta_{i,j}$ and $r_{i,j}$ are decreasing functions, both of $r_{i,j-1}$ and of $\theta_{i,j-1}$.
	Composing these collision maps shows that the signs of the corresponding derivatives satisfy
	\begin{equation}\label{eq:sgn}
		\sgn \frac{d\theta_{i,j-1}}{d\thetaout_{i-1}} = 
		\sgn \frac{dr_{i,j-1}}{d\thetaout_{i-1}} = 
		-\sgn \frac{d\theta_{i,j}}{d\thetaout_{i-1}} = 
		- \sgn \frac{dr_{i,j}}{d\thetaout_{i-1}} \neq 0.
	\end{equation}
	The map $\Psi^i$ is obtained by composing the separate collision maps:
	$$
	\Psi^i = F_{i,n_i} \circ \cdots \circ F_{i,1} \circ h_i.
	$$
	
	\begin{rmk}
		The map $h$ is not a collision map, because $\ell_0$ and $\ell_W$ are not part of the boundary of the billiard table. However, when composed with
		a collision map, the composition is in the form \eqref{eq:DF}. Indeed,
		because $\kappa_{i-1,n_{i-1}} = \kappa_{i,0} = 0$,
		\begin{eqnarray}\label{eq:pr}
			DF_{i,1} \cdot Dh_i
			&=&
			\frac{-1}{\sin \theta_{i,1}}
			\begin{pmatrix}
				\sin \theta_{i,0} & \tau_{i,1} \\
				\kappa_{i,1} \sin \theta_{i,0} & \tau_{i,1} \kappa_{i,1} + \sin \theta_{i,1}
			\end{pmatrix}
			\cdot
			\begin{pmatrix}
				1 & \tau_i/\sin\theta_{i,0} \\
				0 & 1
			\end{pmatrix}  \nonumber \\
			&=&
			\frac{-1}{\sin \theta_{i,1}}
			\begin{pmatrix}
				\sin \theta_{i,0} & \tau_i + \tau_{i,1} \\
				\kappa_{i,1} \sin \theta_{i,0} & (\tau_i+\tau_{i,1}) \kappa_{i,1} + \sin \theta_{i,1}
			\end{pmatrix},
		\end{eqnarray}
		as is to be expected.
	\end{rmk}

	Now to get a lower bound for $|\frac{d}{d\thetaout_{i-1}} \Psi^i_\theta(\rout_{i-1},\thetaout_{i-1})|$,
	necessary to prove that randomized angle map $\Psi_{R_i}$ is expanding,
	we can look at the right bottom entry of the derivative matrix
	\begin{eqnarray}\label{eq:DPsi}
		D\Psi^i &=&  DF_{i,n_i} \cdots DF_{i,1} \cdot Dh_i \\
		&=& DF_{i,n_i} \cdots DF_{i,2} \cdot 
		\frac{-1}{\sin \theta_{i,1}}
		\begin{pmatrix}
			\sin \theta_{i,0} & \tau_i + \tau_{i,1} \\
			\kappa_{i,1} \sin \theta_{i,0} & (\tau_i+\tau_{i,1}) \kappa_{i,1} + \sin \theta_{i,1}
		\end{pmatrix}, \nonumber
	\end{eqnarray}
	where we used \eqref{eq:pr} to get the second line. Note that $\tau_i
	= W/\sin \thetain_i$.
	Just multiplying the right bottom entries of each matrix and ignoring the factors $-1$,
	we obtain a term
	\begin{eqnarray}\label{eq:W}
		\left(1+\frac{(\tau_i+\tau_{i,0}) \kappa_{i,1}}{\sin \theta_{i,1}}\right)
		\prod_{j=2}^{n_i} \left(1+\frac{\tau_{i,j} \kappa_{i,j}}{\sin \theta_{i,j}}\right)
		&\geq& 1+\frac{\tau_1 \kappa_1}{\max(\sin \thetain_i, \sin \thetaout_i)} \nonumber \\
		&\geq& 1+ \frac{W \kappa_{i,1}}{\sin \thetain_i \max(\sin \thetain_i, \sin \thetaout_i)}.
	\end{eqnarray}
	The other terms all have the same sign, so the total derivative
	$\frac{d}{d\thetaout_{i-1}} \Psi^i_\theta(\rout_{i-1},\thetaout_{i-1})$
	is only larger in absolutely value. The next lemma is necessary for the estimates of the variation of $1/|\Psi'_{R_i}|$ done in Section~\ref{sec:variation}.
	
	\begin{lemma}\label{lem:tau}
		The flight-times $\tau_{i,j}$ as functions of $\thetaout_{i-1}$
		have at most two monotone branches on each piece of continuity.
	\end{lemma}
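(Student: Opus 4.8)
The plan is to fix a piece of continuity of the map $\thetaout_{i-1}\mapsto\tau_{i,j}$ and abbreviate $\alpha:=\thetaout_{i-1}=\thetain_i$. On such a piece the combinatorial type of the trajectory inside $\mic_i$ (the curves $\Gamma_{i,0},\dots,\Gamma_{i,n_i}$ hit, and the number $n_i$) is constant and the maps $\alpha\mapsto(r_{i,l},\theta_{i,l})$ are $C^1$. The only input I take from the preceding material is the sign pattern \eqref{eq:sgn}: on a piece of continuity each of the derivatives $\tfrac{dr_{i,l}}{d\alpha}$ and $\tfrac{d\theta_{i,l}}{d\alpha}$ is nowhere zero and of constant sign, so every $r_{i,l}(\alpha)$ and every $\theta_{i,l}(\alpha)$ is strictly monotone, and consecutive ones move in opposite directions. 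Writing $P_l=\Gamma_{i,l}(r_{i,l})$ (all boundary curves parametrised by arc length) and differentiating $\tau_{i,j}=|P_j-P_{j-1}|$ along the family, the fact that the flight direction makes angle $\theta_{i,j-1}$ with the tangent at $P_{j-1}$ and — by the reflection law at $P_j$, which preserves the tangential component — angle $\theta_{i,j}$ with the tangent at $P_j$, yields the identity
\begin{equation}\label{eq:dtau}
\frac{d\tau_{i,j}}{d\alpha}=\cos\theta_{i,j}\,\frac{dr_{i,j}}{d\alpha}-\cos\theta_{i,j-1}\,\frac{dr_{i,j-1}}{d\alpha}
\end{equation}
(the overall signs depend on the orientation convention for the angles and play no role below).

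The first flight, $j=1$, is immediate: the entrance point is frozen by the randomisation, $r_{i,0}\equiv R_i$, so $\tfrac{dr_{i,0}}{d\alpha}=0$ and \eqref{eq:dtau} reads $\tfrac{d\tau_{i,1}}{d\alpha}=\cos\theta_{i,1}\,\tfrac{dr_{i,1}}{d\alpha}$. Since $\tfrac{dr_{i,1}}{d\alpha}\ne 0$, a turning point of $\tau_{i,1}$ can occur only where $\theta_{i,1}=\pi/2$, and because $\theta_{i,1}(\alpha)$ is strictly monotone this happens at most once. Hence $\tau_{i,1}$ has at most two monotone branches.

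For $2\le j\le n_i$ both derivatives in \eqref{eq:dtau} are nonzero and, by \eqref{eq:sgn}, of opposite sign, so $\tfrac{d\tau_{i,j}}{d\alpha}$ vanishes precisely where the trajectory curve $\alpha\mapsto\big(r_{i,j-1}(\alpha),r_{i,j}(\alpha)\big)$ — which by \eqref{eq:sgn} has everywhere strictly negative slope in the $(r_{i,j-1},r_{i,j})$--plane — is tangent to a level set of $\tau_{i,j}^2$, regarded as the function $(v,u)\mapsto|\Gamma_{i,j-1}(v)-\Gamma_{i,j}(u)|^2$. The point of squaring is that this function is \emph{strictly convex}: using $P_j-P_{j-1}=\tau_{i,j}\hat e_j$, $\Gamma_{i,l}''=\kappa_{i,l}N_{i,l}$, and the fact that (by the convexity of the $\Gamma_{i,l}$ in (M1)) $\mic_i$ lies on the side of each $\Gamma_{i,l}$ opposite to its centre of curvature, the Hessian comes out as
\[
2\begin{pmatrix}1+\kappa_{i,j-1}\tau_{i,j}\sin\theta_{i,j-1} & -\langle\Gamma'_{i,j-1},\Gamma'_{i,j}\rangle\\[1mm] -\langle\Gamma'_{i,j-1},\Gamma'_{i,j}\rangle & 1+\kappa_{i,j}\tau_{i,j}\sin\theta_{i,j}\end{pmatrix},
\]
whose off-diagonal entry is at most $1$ in absolute value while, by $\kappa_{\min}>0$ from (M1), the diagonal entries exceed $1$ (for $j=n_i$ the lower-right entry equals $1$, which still suffices). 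So the level sets of $\tau_{i,j}^2$ are nested strictly convex curves, and the task is reduced to bounding by one the number of tangencies between such a family and a curve of everywhere-negative slope.

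I expect this tangency count to be the main obstacle, and to be where essentially all the technical work sits: strict convexity of $\tau_{i,j}^2$ alone does not make its restriction to an arbitrary curve have at most one critical point, since at a critical point the second derivative of the restriction is the (positive) Hessian quadratic form on the tangent of the trajectory curve \emph{plus} a term of the form $\langle\nabla\tau_{i,j}^2,\ddot\gamma\rangle$ that can be negative. One therefore has to control how much the trajectory curve bends, i.e.\ the sign and size of $\tfrac{d^2 r_{i,j}}{dr_{i,j-1}^2}$, and play it off against the lower bound $\kappa_{\min}$ on the diagonal of the Hessian. The plan would be to refine the piece of continuity so that, in addition, each curvature function $r\mapsto\kappa_{i,l}(r)$ is monotone — permitted by (M1) — and to use that the cheeks $\Gamma_i^{\text{left}},\Gamma_i^{\text{right}}$ are genuine circular arcs (M2), which makes the quantities governing the flights involving them completely explicit; on each such refined piece one then shows directly that $\tau_{i,j}^2$ restricted to the trajectory curve is unimodal. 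This curvature bookkeeping, rather than any new conceptual ingredient, is the heart of the lemma.
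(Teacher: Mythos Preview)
Your setup through \eqref{eq:dtau} is sound, and the $j=1$ case is handled cleanly. The genuine gap is exactly where you locate it: for $j\ge 2$ the tangency count is never established, and you essentially say so yourself. Strict convexity of $\tau_{i,j}^2$ on the $(r_{i,j-1},r_{i,j})$--plane does not bound the number of critical points of its restriction to a curve of negative slope, because the curvature term $\langle\nabla\tau_{i,j}^2,\ddot\gamma\rangle$ is uncontrolled. Your proposed remedy --- refining the piece of continuity into subpieces on which the curvatures $\kappa_{i,l}$ are monotone and then doing ``curvature bookkeeping'' --- is not carried out, and even if it were, it would prove only a weaker statement (at most two monotone branches on each refined subpiece, hence boundedly many on the original piece) rather than the lemma as stated. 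The specific parts of (M1) and (M2) you plan to invoke (piecewise monotone curvature, circular cheeks) are in fact not used in the paper's proof.

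The paper takes a purely geometric route that sidesteps all second--derivative analysis. It argues by contradiction: if there were three monotone branches, $\tau_{i,j}$ would have an interior local maximum, hence angles $\thetaout_\pm$ with $\tau_{i,j}(\thetaout_-)=\tau_{i,j}(\thetaout_+)=:\hat\tau$ and $\tau_{i,j}>\hat\tau$ on $(\thetaout_-,\thetaout_+)$. The four collision points $\Gamma_{i,j-1}(r_{i,j-1}^\pm)$, $\Gamma_{i,j}(r_{i,j}^\pm)$ span a quadrilateral $Q$ with two equal sides $A^\pm$ of length $\hat\tau$ and chord sides $A'\subset\Gamma_{i,j-1}$, $A''\subset\Gamma_{i,j}$; by convexity the intervening boundary arcs lie inside $Q$. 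The sign pattern \eqref{eq:sgn} together with convexity of $\Gamma_{i,j-1}$ makes the angle $\alpha(\thetaout_{i-1})$ between $A'$ and the flight segment $A_{i,j}(\thetaout_{i-1})$ monotone, which forces $|A'|<|A''|$ and hence one interior angle $\beta^+$ of $Q$ at $A''$ to be acute. An elementary circle argument (the end of $A''$ near $\Gamma_{i,j}(r_{i,j}^+)$ lies inside the circle of radius $\hat\tau$ centred at $\Gamma_{i,j-1}(r_{i,j-1}^+)$) then shows that for small $\eps>0$ no segment of length $\hat\tau$ at angle $\alpha(\thetaout_+-\eps)\in(\alpha^-,\alpha^+)$ can reach $\Gamma_{i,j}$ from $\Gamma_{i,j-1}(r_{i,j-1}(\thetaout_+-\eps))$, so $\tau_{i,j}(\thetaout_+-\eps)<\hat\tau$, contradicting the choice of $\thetaout_\pm$. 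No Hessian, no curvature of the trajectory curve, and nothing from (M1)--(M4) beyond convexity of the boundary pieces is needed.
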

	
	\begin{proof}
		Recall that $\Gamma_{i,j-1}$ and $\Gamma_{i,j}$ are the pieces of the boundary of the $i$-th microstructure that the particle has its $j-1$-st and $j$-th collision with,
		namely at the collision points $\Gamma_{i,j-1}(r_{i,j-1})$ and
		$\Gamma_{i,j}(r_{i,j})$. Since the initial position in a microstructure is fixed for a given $R_i$ and since we only vary $\thetaout_{i-1}$, $r_{i,j-1}$ and $r_{i,j}$ are functions of $\thetaout_{i-1}$ through iterations of $F_{i,j}$, and the same holds for the outgoing angles $\theta_{i,j-1}$ and $\theta_{i,j}$ at these collision points.
		Let $A_{i,j} := [\Gamma_{i,j-1}(r_{i,j-1}), \Gamma_{i,j}(r_{i,j})]$
		be the straight arc between these two consecutive collision points,
		so the flight time $\tau_{i,j} = |A_{i,j}|$ is also a function of $\thetaout_{i-1}$.
		
		Assume by contradiction that $\thetaout_{i-1} \mapsto \tau_{i,j}(\thetaout_{i-1})$ has at least three monotone branches on an interval where it is well-defined and continuous. Then there is a local maximum in the interior of this interval.
		Hence there is a pair of distinct angles $\thetaout_{\pm}$ such that
		\begin{equation}\label{eq:thetapm}
			\tau_{i,j}(\thetaout_-) = \tau_{i,j}(\thetaout_+) =: \hat \tau,
			\quad \text{ and } \quad 
			\tau_{i,j}(\thetaout_{i-1}) > \hat \tau \quad \text{ for all }
			\thetaout_{i-1} \in (\thetaout_-, \thetaout_+).
		\end{equation}
		Let $r_{i,j-1}^{\pm} = r_{i,j-1}(\thetaout_{\pm})$, $r_{i,j}^{\pm} = r_{i,j}(\thetaout_{\pm})$ and $A^{\pm} = A_{i,j}(\thetaout_{\pm})$.
		Also define the straight arcs
		$$
		A' = [ \Gamma_{i,j-1}(r_{i,j-1}^-), \Gamma_{i,j-1}(r_{i,j-1}^+)]
		\quad  \text{ and } \quad
		A'' = [ \Gamma_{i,j}(r_{i,j}^-), \Gamma_{i,j}(r_{i,j}^+)].
		$$
		Then $A^+$, $A^-$, $A'$ and $A''$ define a quadrilateral $Q$ with two equal sides, see Figure~\ref{fig:Quadri}.
		
		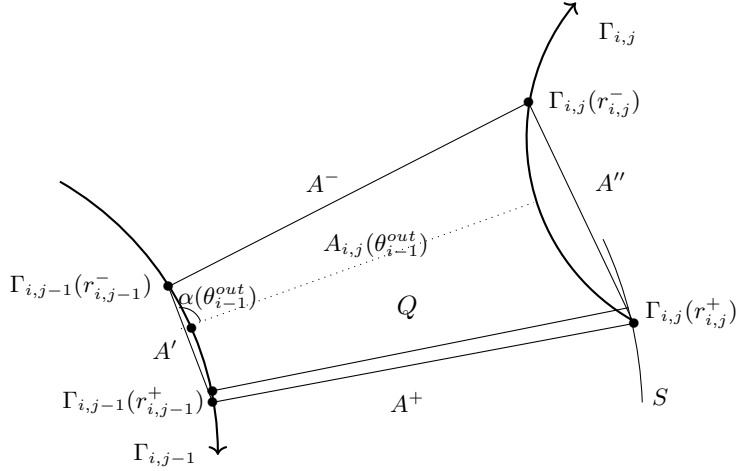
\begin{figure}[ht]
			\begin{center}
				\begin{tikzpicture}[scale=1.4]
					\draw[<-, thick] (8, 0) arc (0:60:3); \node at (7.5, 0) {\small $\Gamma_{i,j-1}$};
					\node at (7.53, 1.6) {\small $\bullet$};
					\node at (6.7, 1.6) {\small $\Gamma_{i,j-1}(r_{i,j-1}^-)$};
					\node at (7.95, 0.5) {\small $\bullet$};
					\node at (7.2, 0.5) {\small $\Gamma_{i,j-1}(r_{i,j-1}^+)$};
					\draw[<-, thick] (11.4, 4.3) arc (140:240:2); \node at (11.8, 4) {\small $\Gamma_{i,j}$};
					\node at (10.95, 3.35) {\small $\bullet$};
					\node at (11.95, 1.25) {\small $\bullet$};
					\draw[-] (7.53, 1.6) --  (10.95, 3.35) --  (11.95, 1.25) --  (7.95, 0.5)  -- (7.53, 1.6);  \node at (11.73, 2.6) {\small $A''$};
					\node at (7.5, 1) {\small $A'$};
					\node at (9, 2.6) {\small $A^-$};
					\node at (9.8, 0.5) {\small $A^+$};
					\node at (9.8, 1.4) {$Q$};
					\draw[-] (12.03, 0.5) arc (2:25:4);
					\node at (11.58, 3.35) {\small $\Gamma_{i,j}(r_{i,j}^-)$};
					\node at (12.5, 1.35) {\small $\Gamma_{i,j}(r_{i,j}^+)$};
					\node at (7.95, 0.6) {\small $\bullet$};
					\draw[-] (7.9, 0.6) --  (11.9, 1.4);
					\node at (7.75, 1.2) {\small $\bullet$};
					\draw[-, dotted] (7.65, 1.2) --  (11, 2.4);
					\draw[-] (7.63, 1.4) arc (95:20:0.2);
					\node at (8, 1.5) {\small $\alpha(\thetaout_{i-1})$};
					\node at (9.5,2) {\small $A_{i,j}(\thetaout_{i-1})$};
					\node at (12.2, 0.55) {\small $S$};    
				\end{tikzpicture}
				\caption{The quadrilateral $Q$ in the proof, with $|A^-| = |A^+| = \hat \tau$.}\label{fig:Quadri}
			\end{center}
		\end{figure}

		By convexity, the segment of $\Gamma_{i,j-1}$ between 
		$\Gamma_{i,j-1}(r_{i,j-1}^-)$ and $\Gamma_{i,j-1}(r_{i,j-1}^+)$
		and the segment of $\Gamma_{i,j}$ between 
		$\Gamma_{i,j}(r_{i,j}^-)$ and $\Gamma_{i,j}(r_{i,j}^+)$ lie inside $Q$.
		
		Orient $A'$ in the same way as $\Gamma_{i,j-1}$, and let $\alpha(\thetaout_{i-1})$ 
		be the angle between  $A'$ in its negative direction and $A_{i,j}(\thetaout_{i-1})$.
		Monotonicity of $\thetaout_{i-1} \mapsto \theta_{i,j-1}(\thetaout_{i-1})$ together with the convexity of
		$\Gamma_{i,j-1}$ imply that $\thetaout_{i-1} \mapsto \alpha(\thetaout_{i-1})$
		is monotone. Note that $r_{i,j} \mapsto \alpha(\thetaout_{i-1}(r_{i,j}))$
		is increasing, when we consider $\alpha$ as function of $r_{i,j}$,
		namely by taking the inverse function of $\thetaout_{i-1} \mapsto r_{i,j}(\thetaout_{i-1})$,
		which is decreasing due to \eqref{eq:sgn}.
		
		Let $\alpha^\pm = \alpha(\thetaout_\pm)$. The monotonicity of 
		$\thetaout_{i-1} \mapsto \alpha(\thetaout_{i-1})$ implies that $|A'| < |A''|$.
		As mentioned, the sides $A^+$and $A^-$ have equal length $\hat \tau$.
		
		Let $\beta^\pm$ be the internal angles of $Q$ where $A''$ meets with $A^\pm$.
		Since $|A''| > |A'|$, the smallest of $\beta^\pm$, say $\beta^+$,
		is a sharp angle.
		Therefore, the arc $A''$ near $\Gamma_{i,j}(r_{i,j}^+))$
		lies inside a circle of radius $\hat \tau$  with center
		$\Gamma_{i,j}(r_{i,j-1}^+)$,
		see the circle segment $S$ as in Figure~\ref{fig:Quadri}.
		
		Now take 
		$\alpha^\eps = \alpha(\thetaout_+-\eps) \in (\alpha^-,\alpha^+)$
		and 
		$r_{i,j-1}^\eps = r_{i,j-1}(\thetaout_+-\eps)$ for a small $\eps > 0$.
		Then it is impossible to fit a segment of length
		$\hat \tau$  between $\Gamma_{i,j}(r_{i,j}^+-\eps)$ and $\Gamma_{i,j}$
		at an angle $\alpha^\eps  \in (\alpha^-,\alpha^+)$ with $A'$.
		So $\tau_{i,j}(\thetaout_--\eps) < \hat \tau$, contradicting \eqref{eq:thetapm}.
		This proves the lemma.
	\end{proof}

	\section{Randomization and transfer operators}
	\label{sec:rtr}
	
	\subsection{Randomization}\label{randomization}
	
	As mentioned in the beginning of Section~\ref{sec:basicDef},
	the randomness concerns the position of microstructure $\mic_i$ relative to
	the position $\rin_i$ where the trajectory crosses the boundary of the tube, namely, the left endpoint of $\mic_i$ is randomized to
	\begin{equation}\label{eq:randomm}
		m_i = \rin_i - R_i,
	\end{equation}
	where $R_i$ are independent identically distributed random variables, distributed 
	according to some probability measure $\nu$ such that 
	the Radon-Nikod\'ym derivatives $\frac{d\nu(R_i)}{d\Leb}$ exist and are bounded. (One can think of a uniform distribution: $R_i \simeq U([0,1])$.)
	
	\begin{rmk}\label{rmk:random}
		This choice of randomization has the advantage that the trajectory and its derivatives still follow the rules of non-random
		elastic billiards. The randomization only affects where 
		the particle collides with
		the closed sides of microstructure $\mic_i$. 
		This randomization doesn't neglect the expansion and sensitivity of the past trajectory. In particular, it doesn't ignore the expansion built up due to the width of the tube, which the model considered in \cite{FZ10, FZ12} seems to ignore. 
		Ignoring the size of the tube would be physically
		inconsistent with the fact that the width of the tube $W$ features in the displacement $X_i = W/\tan \thetaout_i$ after exiting $\mic_i$. 
		For us, a large value of $W$ is crucial to get enough expansion for the Lasota-Yorke inequalities to hold.
	\end{rmk}
	The random version of $\Psi^i$ is a one-dimensional random map, denoted as:
	\begin{equation}\label{eq:psri}
		\Psi_{R_i}: (0,\pi) \to (0,\pi), \qquad  \thetaout_{i-1} \mapsto \thetaout_i,
	\end{equation}
	where $R_i$ is the sequence of random variables introduced in~\eqref{eq:randomm}. To be more precise, $\Psi_{R_i}(\thetaout_{i-1})$ is the angle component of the image of $\Psi^{i}(\rout_{i-1}, \thetaout_{i-1})$, with $\Psi$ defined in~\eqref{twoDimPsi}, where the random shift $R_{i}$ is taken into account after the particle is transported to the other side of the tube according to the function $h_i$. Notice that we can always set $\rout_{i-1} = 0$, since the periodic arrangement of microstructure is $\Z$-shift invariant (remember that the open side of each microstructure has length 1), and the precise position on the previous side of the tube modulo $\Z$ is forgotten after the random shift. Thus we obtain a random map of angles. Independent of the value of $R_i$, we can use the equation before \eqref{eq:W} to obtain
	\begin{equation} \label{twoDifs}
		|\Psi'_{R_i}(\Psi_{R_i'}^{-1}(\theta))| \geq  1+ \frac{W \kappa_{i,1}}{\sin \thetain_i \max\{ \sin \thetain_i, \sin \thetaout_i\}}.
	\end{equation}
	We can represent $\Psi_{R_i}$ as a skew-product with fiber map $\Psi_{R_0}$,
	as follows
	\begin{equation}\label{rds}
		T: [0,1]^{\Z}\times (0,\pi)\to [0,1]^{\Z}\times (0,\pi),\quad
		T ( (R_i)_{i \in \Z} ,\theta) = (\sigma((R_i)_{i \in \Z}), \Psi_{R_0}(\theta) ),
	\end{equation}
	where $\sigma$ is the usual left shift. The infinite product measure $\nu^{\otimes \Z}$ lives on $[0,1]^\Z$ and is left-shift invariant.
	The iterates of $T^n$ are given by
	$T^n ( (R_i)_{i \in \Z} ,\theta) = (\sigma^n((R_i)_{i \in \Z}), \Psi_{R_{n-1}} \circ \dots \circ \Psi_{R_0}(\theta))$.
	
	The whole random billiards seen as a $\Z$-extension over a compact billiard table is given by the skew-product with one extra component $u \in \Z$:
	\begin{equation}\label{eq:skew}
		( (R_i)_{i \in \Z} ,\theta, u) \mapsto (\sigma((R_i)_{i \in \Z}), \Psi_{R_0}(\theta), u + \xi(\theta)),
	\end{equation}
	where
	\begin{equation}\label{eq:xiDef}
		\xi(\theta) = \lfloor (\rout_i-m_i) + W/\tan\theta \rfloor  ).
	\end{equation}
	

	\subsection{Transfer operators (average and perturbed) and the BV space for one dimensional maps}\label{transOpSec}
	
	Before writing down the an explicit formula for the transfer operators, we have to describe $\Psi_{R_i}$ as an interval map. For every $R_i \in \left[0,1\right]$ the map $\Psi_{R_i}: (0,\pi) \rightarrow (0,\pi)$ has singularities caused by the particle having a grazing collision or hitting a corner of $\mic_i$. The interval $(0,\pi)$ is partitioned into sub-intervals by the singularities on which $\Psi_{R_i}$ is continuous, called \textit{\em pieces of continuity}. We can specify the pieces of continuity per microstructure. The discrete horizontal displacement between two microstructures $\mic_{i-1}$ to $\mic_i$ is the integer $\xi \in \Z$ defined in \eqref{eq:xiDef}, so we can define a subset $J_{\xi} \subset (0,\pi)$ such that $\thetaout_{i-1} \in J_{\xi}$ means $\xi(\thetaout_{i-1}) = \xi$. The randomisation will not affect this since the microstructure $\mic_{i}$ is shifted around the next entrance point $\rin_{i}$ independent of the value of $\xi$ by definition of the randomisation given in \eqref{eq:randomm}. Since the numbers of collisions within a single microstructure is uniformly bounded, see Lemma~\ref{lem:finite}, the number of pieces of continuity of $\Psi_{R_i}$ inside $J_{\xi}$ is finite (and in fact three if $\left|\xi\right|$ is large). Restricted to a piece of continuity, $\Psi_{R_i}$ need not be monotone. So we will subdivide the pieces of continuity into maximal subintervals (called \textit{domains of monotonicity}), where $\Psi_{R_i}$ is monotone as well.
	This will be important for computing the variation. The domains of monotonicity within $J_{\xi}$ will be denoted by $J_{\xi,\ell}$ and the index set by $\Lambda_{\xi} = \Lambda_{\xi,R_i}$. 
	
	Our transfer operators will be with respect to the invariant measure $d\mu = \frac12 \sin \theta d\Leb$ given by
	$\int P_{R_i} f \cdot g d\mu = \int f \cdot g \circ \Psi_{R_i} d\mu$. In the random setting of $\Psi_{R_i}$, $i \in \Z$, the pointwise formula for the transfer operator $P_{R_i}$ (defined w.r.t. $\mu$) takes the form
	\begin{equation}\label{eq:PR}
		P_{R_i}f(\theta) = \sum_{\xi \in \Z, \ell \in \Lambda_{\xi}}
		\frac{ f(\Psi_{R_i}^{-1}(\theta)) \sin(\Psi_{R_i}^{-1}(\theta))}
		{|\Psi'_{R_i}( \Psi_{R_i}^{-1}(\theta))| \sin \theta}
		\1_{J_{\xi,\ell}}(\Psi_{R_i}^{-1}\theta).
	\end{equation}
	The average transfer operator is given by
	\begin{equation}\label{avTran}
		Pf(\theta) = \int_0^1 \sum_{\xi\in \Z, \ell \in \Lambda_{\xi}}
		\frac{ f(\Psi_{R_i}^{-1}(\theta)) \sin(\Psi_{R_i'}^{-1}(\theta))}
		{|\Psi'_{R_i}(\Psi_{R_i}^{-1}(\theta))| \sin \theta}
		\1_{J_{\xi,\ell}}(\Psi_{R_i}^{-1}\theta) \, d\nu(R_i).
	\end{equation}
	
	\begin{rmk}\label{rmk:integr}
		In~\eqref{avTran} and throughout, 
		$\int_{[0,1]}\cdot\ d\nu(R_i)$ is shorthand for $\int_{[0,1]^\Z}\cdot\ d\nu^{\otimes \Z}\left((R_j)_{j\in\Z}\right)$. This is justified because the integrand depends on $R_i$ only and it is independent of $R_j$, $j\ne i$.
	\end{rmk}

	To obtain the desired limit theorem we consider a perturbed version of 
	$P$ by $e^{itX}$, $t\in\R$, where $X(\theta) = W/ \tan (\theta)$.
	The perturbed averaged operator is defined by
	\begin{equation}\label{eq:avpert}
		P_tf(\theta)=\int_0^1 P_{R_i} f(e^{itX})(\theta) \, d\nu(R_i).
	\end{equation}
	We note that $P_0=P$, as defined in~\eqref{avTran}.
	
	We want to apply the usual Nagaev method to $P_t$.
	In this sense, we need to establish Lasota-Yorke inequalities  and 'good' continuity estimates (as in Section~\ref{sec:continuity} below) in
	BV for $P_t$. Here and throughout,
	$$
	\|f\|_{BV}=\Var (f)+\|f\|_\infty, \quad
	\Var(f ) = \inf_{g\sim f} \sup_{0=y_0<\ldots<y_k=1}\sum_{j=1}^k | g(y_j) - g(y_{j-1}) |,
	$$
	where $g\sim f$ if $f$ and $g$ differ on a null set. Here and throughout, $\Var(f)$ denotes the variation of the (equivalence class) of $f$.

	We  record two inequalities that we shall use throughout without further comments.
	While it is clear that 
	$\|Pf(\theta)\|_\infty\le \int_0^1 \|P_{R_i} f(\theta)\|_\infty d\nu(R_i)$,
	we clarify that $\Var (Pf(\theta))\le \int_0^1 \Var(P_{R_i} f(\theta))\,d\nu(R_i)$. The latter can be justified as follows
	\begin{eqnarray*}
		\Var\left( \int_0^1 P_{R_i} f \,d\nu(R_i) \right) &=& \int_0^\pi \left|\frac{d}{d\theta}\int_0^1 P_{R_i} f(\theta)\,d\nu(R_i)\right|\, d\theta \leq \int_0^\pi \int_0^1 \left|\frac{d}{d\theta}P_{R_i} f(\theta)\right|\,d\nu(R_i)\, d\theta \\
		&=& \int_0^1\int_0^\pi \left|\frac{d}{d\theta}P_{R_i} f(\theta)\right|\, d\theta\,d\nu(R_i)=\int_0^1 \Var(P_{R_i} f) \,d\nu(R_i).
	\end{eqnarray*}
	
	\subsection{A classical estimate for near-grazing collisions}
	
	In the remainder of the paper we abbreviate systematically
	\begin{equation}\label{eq:thetatildetheta}
		\tilde \theta := \thetaout_{i-1} \quad \text{ and } \quad \theta := \thetaout_i = \Psi_{R_i}(\tilde\theta).
	\end{equation}
	We also introduce a threshold $\eta > 0$ such that if $\sin \tilde \theta < \eta$, then there are only three collision patterns possible in the microstructure $\mic_i$, namely (when the particle enters $\mic_i$ from the left) a single collision with the left cheek, a single collision with the right cheek, or a single collision with the left cheek followed by a collision with the right cheek. The latter we call a double collision.
	(If the particle enters $\mic_i$ from the right, then we have to swap the word ``left'' and ``right'', but there are still these three collision patterns.)
	It follows that $\Psi_{R_i}$ has six branches 
	on the region $\sin \tilde \theta < \eta$,
	three for $\tilde \theta$ close to $0$ and another three for $\tilde \theta$ close to $\pi$.
	
	The following lemma compares $\sin\tilde \theta$ with $\sin\theta$ in these cases. It comes basically from~\cite[Propositions 8 and 9]{SV07}, but we give a proof for transparency and completeness.
	
	\begin{lemma}\label{lem:SV} Given (M1) and (M2),
		there exists a constant $C_\kappa>0$ (depending only on $\kappa$)
		such that when $\sin\tilde\theta <\eta$,
		\begin{equation}\label{eq:theta}
			C_\kappa^{-1} \sin^2 \theta \leq \sin \tilde \theta 
			\leq C_{\kappa} \sqrt{\sin \theta}.
		\end{equation} 	
	\end{lemma}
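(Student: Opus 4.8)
The plan is to track the geometry of the near-grazing trajectory inside the cheek. Since $\sin\tilde\theta<\eta$, Lemma~\ref{lem:finite} has already forced the trajectory into one of the three patterns (single left-cheek collision, single right-cheek collision, or left-then-right double collision), and by assumption (M2) the cheeks are \emph{circular arcs tangent to the tube boundary}. The key idea is that for a single collision with a circular arc of radius $\rho=1/\kappa$ that is tangent to the straight line $\ell_0$ (or $\ell_W$), elementary trigonometry of the circle relates the incoming and outgoing angles. Concretely: let the particle enter at angle $\tilde\theta$ (small, so nearly along the tube), travel the chord to the cheek, reflect, and exit at angle $\theta$. Writing $\phi$ for the (outgoing) collision angle $\theta_{i,1}$ with the tangent at the collision point, one gets by the reflection law together with the fact that the arc is tangent to $\ell_0$ that the total turning is governed by $\phi$, and the chord-length/flight-time $\tau_{i,1}$ is comparable to $\rho\sin\phi$ up to constants depending only on $\kappa_{\min},\kappa_{\max}$. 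This is exactly the Santaló-type computation in~\cite[Prop.~8--9]{SV07}.

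First I would do the single-collision case in detail. Set up coordinates with the tangent point of the cheek at the origin, $\ell_0$ as the $x$-axis, and the circle centered at $(0,\rho)$. A ray entering at small angle $\tilde\theta$ from a point on $\ell_0$ hits the circle at a point where the tangent line makes angle, say, $\psi$ with the horizontal; the incoming angle with that tangent is $\tilde\theta+\psi$ and the outgoing angle (by the reflection law) is $\theta_{i,1}=\pi-(\tilde\theta+\psi)$ or a similar linear relation depending on orientation — in any case the three quantities $\tilde\theta$, $\theta$, and $\psi$ are related linearly (mod the reflection), so that $\sin\theta$, $\sin\tilde\theta$, and $\sin\psi$ are all mutually comparable once we know how far from grazing the collision is. The point is then to show $\sin\psi \asymp \sqrt{\sin\tilde\theta}$: the entering ray, nearly horizontal, must rise a height comparable to $\rho(1-\cos\psi)\asymp\rho\psi^2$ over a horizontal distance at most the diameter $2\rho$, and that height is produced by slope $\tan\tilde\theta\asymp\tilde\theta$, giving $\psi^2\asymp\tilde\theta$, i.e. $\sin\psi\asymp\psi\asymp\sqrt{\sin\tilde\theta}$. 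Combining with the linear angle relations yields $\sin\theta\asymp\sin\psi\asymp\sqrt{\sin\tilde\theta}$, which gives both inequalities in~\eqref{eq:theta} at once, with the constant $C_\kappa$ absorbing $\rho=1/\kappa_{\min}$ versus $1/\kappa_{\max}$.

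Next I would handle the double collision (left cheek, then right cheek). Here I would apply the single-collision estimate twice: after the first collision the angle $\theta_{i,1}$ satisfies $\sin\theta_{i,1}\asymp\sqrt{\sin\tilde\theta}$, so $\theta_{i,1}$ is no longer near-grazing; then the second collision with the right cheek is a generic (bounded-away-from-grazing) reflection off a circular arc, which changes $\sin$ of the angle only by bounded multiplicative factors depending on $\kappa$ and on the geometry controlled by (M3)–(M4). Hence $\sin\theta=\sin\theta_{i,2}\asymp\sin\theta_{i,1}\asymp\sqrt{\sin\tilde\theta}$, giving~\eqref{eq:theta} again (the constant worsening but still depending only on $\kappa$). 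The remaining single-right-cheek case is symmetric to the single-left-cheek case by the left-right reflection already used in Lemma~\ref{lem:finite}.

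The main obstacle is making the ``height rise versus slope'' argument rigorous \emph{uniformly}: one must check that the entrance point of the ray on $\ell_0$ is within bounded distance of the tangency point (so the horizontal travel is $O(\rho)$), that the chord actually meets the relevant arc and not some other part of $\partial\mic_i$, and that the constants do not degenerate as $\tilde\theta\to0$. The tangency hypothesis (M2) is what saves this: because the cheek is tangent to the tube, a ray with slope $\to0$ grazes the cheek at a point with $\psi\to0$ in a controlled way, with no competing zeroth-order term. Once the single-collision estimate is pinned down with explicit constants in terms of $\kappa_{\min},\kappa_{\max}$, the rest is bookkeeping. The lower bound $C_\kappa^{-1}\sin^2\theta\le\sin\tilde\theta$ is just the inverse of $\sin\theta\le C_\kappa\sqrt{\sin\tilde\theta}$ rewritten, so really only one inequality needs proving.
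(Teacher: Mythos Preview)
Your single-collision analysis---balancing the height rise $\rho(1-\cos\psi)\asymp\rho\psi^2$ against slope times horizontal run---is exactly the paper's computation for the right-cheek collision. Note, however, that it only yields one direction: the paper obtains $\sin\theta\sim\sqrt{8\kappa s\,\sin\tilde\theta}$ with entry-point parameter $s\in(0,1)$, so $\sin\theta\lesssim\sqrt{\sin\tilde\theta}$ but \emph{not} $\gtrsim$ uniformly (since $s$ can be arbitrarily small). The other inequality $\sin\tilde\theta\lesssim\sqrt{\sin\theta}$ comes from the left-cheek case, equivalently from time reversal, which swaps the two cheeks. In particular, for a left-cheek-only collision one has $\sin\theta<\sin\tilde\theta$, so your two-sided claim $\sin\theta\asymp\sqrt{\sin\tilde\theta}$ fails there outright; each single-cheek pattern supplies one inequality nontrivially and the other trivially.

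The real gap is the double-cheek case. You assert that after the first (left-cheek) collision the angle satisfies $\sin\theta_{i,1}\asymp\sqrt{\sin\tilde\theta}$ and is therefore ``no longer near-grazing,'' making the second collision generic. This is false. For the double-cheek pattern one has $\sin\theta_{i,1}<\sin\tilde\theta$: the first collision is \emph{more} grazing than the entrance, and the intermediate flight makes angle $|\tilde\theta-2\beta_1|\le\tilde\theta$ with the horizontal (here $\beta_1$ is the angle of the normal at the first collision point with the vertical). The second (right-cheek) collision is likewise near-grazing, with $\sin\theta_{i,2}<\sin\theta$. The paper therefore treats this case by a separate calculation tracking both $\beta_1$ and $\beta_2$, using the height relation
\[
\tfrac{1}{\kappa_1}(1-\cos\beta_1)\ \sim\ \tfrac{1}{\kappa_2}(1-\cos\beta_2)+s\sin(\pi-\theta_1-\beta_1),
\]
and splitting into sub-cases according to which term on the right dominates. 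Your iterated single-collision scheme does not see this structure, and the double-cheek case will not go through as written.
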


	This means that $W(\sin \theta)^{-1/2} \ll  \xi(\Psi_{R_i}^{-1}(\theta)) \ll W(\sin \theta)^{-2}$, where $\xi$ is the skew-function from \eqref{eq:skew}.
	Also $\sin \tilde\theta < \eta$ implies that $\sin \theta < \sqrt{\eta C_\kappa}$.

	\begin{figure}[ht]
		\begin{center}
			\begin{tikzpicture}[scale=1]
				\draw[->] (1,5) -- (6.3, 3.6) -- (7.9,5.5);
				\draw[-] (8, 4) arc (90:150:4);
				\draw[-] (8,0) -- (5.65,5);
				\draw[-] (8,0) -- (8,4);
				\draw[-, dotted] (0,5) -- (10,5);
				\draw[-, dotted] (0,4) -- (10,4);
				\node at (0, 4.2) {\small $\ell_0$};
				\draw[-, dotted] (4,3.65) -- (8,3.65);
				\node at (8.6, 3.6) {\small $\frac{1}{\kappa}\cos \beta$};
				\draw[-, dotted] (4,2.5) -- (8,4.5);
				\node at (3, 2.5) {\small tangent line};
				\draw[->] (2, 5) arc (360:330:0.5); \node at (2.45, 4.8) {\small $\tilde \theta$};
				\draw[->] (4.5, 3.65) arc (180:145:0.5); \node at (4.27, 3.85) {\small $\tilde \theta$};
				\draw[->] (6.9, 3.6) arc (0:50:0.6); \node at (7.2, 4.12) {\small $\pi-\theta$};
				\draw[->] (7.2, 5) arc (180:50:0.2); \node at (7.4, 5.4) {\small $\theta$};
				\draw[->] (7.98, 0.7) arc (90:115:0.7); \node at (7.85, 0.8)  {\small $\beta$};
				\draw[->] (6.15, 3.9) arc (110:45:0.3); \node at (6.28, 4.1)  {\small $\alpha$};
				\node at (6.3, 3.6) {\small $\bullet$};   \node at (6.2,3.3) {\small $P$};
				\node at (5.5, 4.6) {\small $\overbrace{\qquad\quad \qquad}^{s}$};
			\end{tikzpicture}
			\caption{Comparing $\theta$ to $\tilde \theta$ for $\sin \tilde \theta < \eta$ at a right cheek collision.}\label{fig:SV}
		\end{center}
	\end{figure}
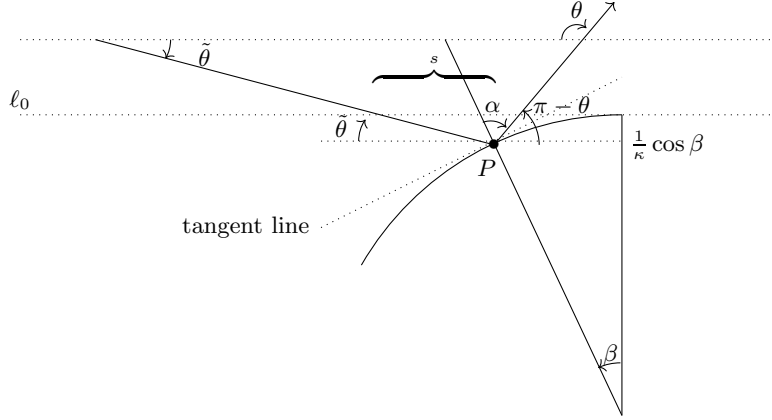

	\begin{proof}
		From Figure~\ref{fig:SV}, for the collision with the right cheek we find 
		$\alpha = \beta + \theta - \frac{\pi}{2}$,
		$(\pi - \theta) + \tilde \theta + 2\alpha = \pi$ and 
		$\frac{1}{\kappa}(1-\cos \beta) = s \sin \tilde\theta$
		for some $s \in (0,1)$.
		Rearrangement gives $\sin \beta = \sqrt{1-\cos^2 \beta}
		\sim \sqrt{2 \kappa s \sin \tilde\theta}$, 
		which combined with $\pi - \theta = \tilde \theta + \beta \leq 2\beta$
		gives $\sin \theta = \sin(\pi - \theta) \sim \sqrt{8 \kappa s \sin \tilde \theta} \leq \sqrt{8\kappa_{\max}}\sqrt{\sin \tilde \theta}$.
		Swapping the role of $\theta$ and $\tilde \theta$ (that is,
		looking at the collision with the left cheek), gives the other inequality. 
		
		For the double cheek collision, we use notation and estimates from Figure~\ref{fig:beta2},
		with $\theta = \thetaout_i$, $\tilde \theta = \thetaout_{i-1}$,
		$\theta_1 = \theta_{i,1}$ and $\theta_2 = \theta_{i,2}$.
		Also $\beta_1$ and $\beta_2$ are the angles at the collision points with $\Gamma_{i,1}$ and $\Gamma_{i,2}$ with curvatures $\kappa_1$ and $\kappa_2$, respectively.
		
		We have 
		\begin{equation}\label{eq:k2}
			\begin{cases}
				\beta_1+(\pi- \theta_1) = \tilde \theta,\quad \pi-\theta = \beta_2 + (\pi-\theta_2) \quad \text{ and } &
				\beta_2 - (\pi-\theta_2) = (\pi-\theta_1) - \beta_1 \\
				\frac{1}{\kappa_1}(1-\cos \beta_1) \sim
				\frac{1}{\kappa_2}(1-\cos \beta_2)
				+ s \sin(\pi-\theta_1-\beta_1)
				& \text{ for } s \lesssim 1.
			\end{cases}
		\end{equation}
		First assume that $\beta_2 - (\pi-\theta_2) = \pi-\theta_1 - \beta_1 \geq 0$, which means that the middle part of the trajectory in $\mic_i$ goes upwards, as in 
		Figure~\ref{fig:beta2}.
		This means that 
		$$
		2\beta_1 \leq \beta_1+(\pi- \theta_1) = \tilde \theta < 3\beta_1,
		$$
		where the second inequality follows because
		$s \sin(\pi-\theta_1-\beta_1) \leq \frac{1}{\kappa_1}(1-\cos \beta_1)$ to make a collision with $\Gamma_{i,2}$ possible.
		Using the main term in the Taylor expansions of $\cos$ and $\sin$, we can rewrite \eqref{eq:k2} to
		$$
		\frac{\beta_1^2}{2\kappa_1}
		\sim \frac{\beta_2^2}{2\kappa_2} + \beta_2 - (\pi-\theta_2).
		$$
		If $\beta_2 - (\pi - \theta_1) \gtrsim \frac{\beta_2^2}{2\kappa_2}$, then
		$$
		\sin \tilde \theta  \leq \sqrt{6\kappa \beta_2}
		\leq \sqrt{12\kappa_1 \, \sin \theta}.
		$$
		Otherwise, 
		$$
		\sin \tilde \theta \geq \sqrt{\frac{\kappa_1}{\kappa_2}}  \beta_2 
		\geq \sqrt{\frac{\kappa_1}{\kappa_2}} \sin \theta.
		$$
		If $\beta_2 - (\pi-\theta_2) = \pi-\theta_1 - \beta_1 \leq 0$,
		then we can reverse the roles of $\tilde \theta$ and $\theta$,
		and the analogous inequalities follow.
	\end{proof}
	
	%
	%

	\section{The variation of $1/|\Psi'_{R_i}|$ }\label{sec:variation}
	
	In this section we will consider the variation of $1/\left| \Psi'_{R_i}\right|$. We do this by considering the cases $\sin \tilde{\theta} \geq \eta$, when the absolute horizontal displacement $|\xi|$ is relatively small, and $\sin \tilde{\theta} \leq \eta$, when $|\xi|$ is large.
	
	\begin{lemma}\label{lem:Lambda}
		Assume (M3) and (M4).
		Let $\Lambda_\eta = \Lambda_\eta(W,R_i)$ refer to the set of branches of $\Psi_{R_i}: \tilde\theta \mapsto \theta$ on the subinterval of $(0,\pi)$ where $\sin \tilde\theta \geq \eta$.
		There is $C_\eta > 0$ such that $\# \Lambda_\eta \leq C_\eta W$.
	\end{lemma}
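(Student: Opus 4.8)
The plan is to write $\#\Lambda_\eta$ as at most a product of two factors: the number of values of $\xi$ for which $J_\xi$ meets the region $\{\sin\tilde\theta\ge\eta\}=[\arcsin\eta,\pi-\arcsin\eta]$, which I will bound by $O(W)$, and a bound — uniform in $\xi$, $W$ and $R_i$ — for the number of domains of monotonicity of $\Psi_{R_i}$ contained in a single $J_\xi$. Multiplying the two will give $\#\Lambda_\eta\le C_\eta W$.

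For the first factor I will use that on $[\arcsin\eta,\pi-\arcsin\eta]$ one has $|\cot\tilde\theta|\le\sqrt{1-\eta^2}/\eta=:c_\eta$, the maximum of $|\cot|$ on this interval being attained at the endpoints. From the definition \eqref{eq:xiDef} of $\xi$ one reads off $|\xi(\tilde\theta)|\le W|\cot\tilde\theta|+C_0$ for an absolute constant $C_0$, the extra term accounting for the floor function and for the bounded quantity $\rout_i-m_i$, which ranges over an interval of length comparable to the unit size of a microstructure. Hence $|\xi(\tilde\theta)|\le c_\eta W+C_0$ whenever $\sin\tilde\theta\ge\eta$, so at most $2c_\eta W+2C_0+1$ indices $\xi$ satisfy $J_\xi\cap[\arcsin\eta,\pi-\arcsin\eta]\ne\emptyset$.

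For the second factor, fix such a $\xi$. On $J_\xi$ the map $\Psi_{R_i}$ is a composition of at most $N$ collision maps by Lemma~\ref{lem:finite}, and as $\tilde\theta$ ranges over $J_\xi$ the combinatorial type of the trajectory inside $\mic_i$ — the sequence of boundary arcs hit, together with the number of collisions — changes only at finitely many values of $\tilde\theta$: those at which a collision point reaches a corner of $\partial\mic_i$, or at which some intermediate angle $\theta_{i,j}$ becomes $0$ or $\pi$. On each maximal subinterval of constant combinatorial type, \eqref{eq:sgn} shows that $d\theta/d\tilde\theta$ has constant nonzero sign, so $\Psi_{R_i}$ is strictly monotone there; consequently every boundary between two domains of monotonicity inside $J_\xi$ must be one of these combinatorial transitions. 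It then remains to check that their number inside any $J_\xi$ is at most a constant $M$ independent of $\xi$, $W$ and $R_i$: this follows from assumptions (M1)--(M4), the uniform collision bound of Lemma~\ref{lem:finite}, and the fact recorded in the text after \eqref{avTran} that $J_\xi$ contains only finitely many pieces of continuity — exactly three once $|\xi|$ is large — together with a compactness argument over the admissible entry data $R_i\in[0,1]$. Thus $\#\Lambda_\xi\le M$; and since every branch counted by $\Lambda_\eta$ lies in some $J_\xi$ with $J_\xi\cap[\arcsin\eta,\pi-\arcsin\eta]\ne\emptyset$, this yields $\#\Lambda_\eta\le M(2c_\eta W+2C_0+1)\le C_\eta W$ with $C_\eta:=M(2c_\eta+2C_0+1)$, using $W\ge1$.

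The step I expect to be the main obstacle is the uniform bound $M$ above: one has to turn the \emph{global} collision bound $N$ of Lemma~\ref{lem:finite} into a bound on the number of combinatorial transitions — and hence of monotone branches — inside one individual $J_\xi$, valid uniformly in the randomisation parameter $R_i$. The sign structure \eqref{eq:DF}--\eqref{eq:sgn} of the collision maps is exactly what makes this possible, since it prevents $\Psi_{R_i}$ from acquiring turning points away from combinatorial transitions; the remaining estimate, that the horizontal displacement $\xi$ is $O(W)$ on $\{\sin\tilde\theta\ge\eta\}$, is elementary.
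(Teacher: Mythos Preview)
Your proof is correct and follows essentially the same two-factor strategy as the paper: bound the number of admissible displacements $\xi$ by $O(W)$ using $|\xi|\lesssim W/\tan\tilde\theta\le W/\eta$ on $\{\sin\tilde\theta\ge\eta\}$, and bound the number of branches per $\xi$ by a constant depending only on the microstructure via Lemma~\ref{lem:finite}. Your write-up is more explicit than the paper's (which simply asserts the per-microstructure bound $N$ without further comment), and your use of the sign structure \eqref{eq:sgn} to rule out turning points away from combinatorial transitions is a welcome clarification of why the number of monotone branches, not just continuity pieces, stays bounded.
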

	
	\begin{proof}
		By the assumption on the microstructures, there is an upper bound $N$ on the number of branches associated to a single microstructure $\mic_i$.
		At angles $\tilde\theta$ satisfying $\sin \tilde\theta \geq \eta$, only microstructures with displacement $|\xi| \leq W/|\tan \tilde\theta| \leq W/\eta$ can be reached. So the lemma holds for 
		$C_\eta = N/\eta$.
	\end{proof}
	
	From Lemma~\ref{lem:tau} we can derive that
	there are a bounded number of pieces of monotonicity inside each piece of continuity $J$, and therefore $\Psi_{R_i}|_J$ has bounded variation.
	Let $I_\xi$ be the collection of branches associated to a displacement $\xi$, i.e.,
	$\xi(\tilde \theta) = \xi$ for each $\tilde \theta \in J_\xi$.
	The next lemma estimates the variation of $1/|\Psi'_{R_i}|$ restricted to each $J_\xi$  for $\sin\tilde \theta < \eta$
	(which means large values of $|\xi| \sim W/|\tan \tilde\theta|$),
	as function of $\theta = \Psi_{Ri}(\tilde \theta)$.
	On each monotone branch of $\Psi_{R_i}$, the variations in $\theta$ and in
	$\tilde\theta = \Psi_{R_i}^{-1}$ are the same.
	Also, the variation of a monotone function $\Var(f) \leq 2\|f\|_\infty$,
	or even $\Var(f) \leq \|f\|_\infty$ if $f$ is non-negative.
	If $f$ is non-negative with $N$ monotone branches, then $\Var(f) \leq N\|f\|_\infty$, which is why Lemma~\ref{lem:tau} is important in the next estimates.
	
	\begin{lemma}\label{lem:VarPsi}
		Assume properties (M1), (M3) and (M4) of the microstructures. Then
		$$
		\Var_{\theta}\left(\frac{1}{|\Psi_{R_i}'|}\Big|_{J_\xi}\right) = O(|\xi|^{-\frac32}) \quad \text{ as } |\xi| \to \infty.
		$$
	\end{lemma}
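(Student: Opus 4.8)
The plan is to work in the near-grazing regime $\sin\tilde\theta<\eta$ of Section~\ref{transOpSec} (forced when $|\xi|$ is large, since $|\xi|\asymp W/|\tan\tilde\theta|$) and, using the left--right reflection symmetry, to treat only $\tilde\theta$ close to $0$. There $\Psi_{R_i}$ has exactly three monotone branches — a single collision with the near cheek, a single collision with the far cheek, and a double (near-then-far) collision — so $J_\xi$ splits into at most three monotone pieces and
$$
\Var_\theta\!\Big(\tfrac1{|\Psi'_{R_i}|}\big|_{J_\xi}\Big)\ \le\ \sum_{\text{pieces}}\Big(\Var_{\tilde\theta}\big(\tfrac1{|\Psi'_{R_i}|}\big)+2\big\|\tfrac1{|\Psi'_{R_i}|}\big\|_\infty\Big),
$$
using that variation in $\theta$ and in $\tilde\theta$ agree on a monotone branch and that each piece contributes at most two endpoint jumps. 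Throughout I would use the following facts: on $J_\xi$ one has $\sin\tilde\theta\asymp W/|\xi|$, and since $\tfrac{d}{d\tilde\theta}\xi\asymp -W/\sin^2\tilde\theta$, the length $|J_\xi|\asymp W/\xi^2$ and $\Var_{J_\xi}(\sin\tilde\theta)\asymp W/\xi^2$ ($\sin\tilde\theta$ being monotone on $J_\xi$); by Lemma~\ref{lem:SV}, $\|\sin\theta\|_{\infty,J_\xi}\lesssim\sqrt{\sin\tilde\theta}\lesssim\sqrt{W/|\xi|}$; and since on each branch $\theta$ (and each intermediate angle $\theta_{i,j}$) stays on one side of $\pi/2$, its sine is piecewise monotone in $\tilde\theta$ with boundedly many pieces, so $\Var_{\tilde\theta}(\sin\theta)\le\|\sin\theta\|_\infty$ and $\Var_{\tilde\theta}(\sin\theta_{i,j})=O(1)$.

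For the sup term, \eqref{twoDifs} together with Lemma~\ref{lem:SV} (which gives $\max(\sin\tilde\theta,\sin\theta)\lesssim\sqrt{\sin\tilde\theta}$ for small $\sin\tilde\theta$) yields $|\Psi'_{R_i}|\gtrsim W/(\sin\tilde\theta)^{3/2}$ on all three branches, hence $\|1/|\Psi'_{R_i}|\|_{\infty,J_\xi}\lesssim(\sin\tilde\theta)^{3/2}/W\asymp\sqrt W\,|\xi|^{-3/2}$. For the variation on a branch I would read off an exact formula from \eqref{eq:DF}--\eqref{eq:DPsi}: applying the chain of matrices to the input tangent $(0,1)$ and using $\kappa_{i,0}=0$, one gets $1/|\Psi'_{R_i}|=U/D$, where $U=\sin\tilde\theta\prod_{j=1}^{n_i}\sin\theta_{i,j}$ (so $U=\sin\tilde\theta\sin\theta$ for a single collision, $U=\sin\tilde\theta\,\sin\theta_{i,1}\sin\theta$ for a double), and $D$ is a sum of non-negative terms (the matrix entries share a common sign) whose only term containing the factor $W$ equals $W$ times a combination dominating $\tau_{i,j}\kappa_{i,j}\kappa_{i,j-1}+\kappa_{i,j-1}\sin\theta_{i,j}+\kappa_{i,j}\sin\theta_{i,j-1}$ at the last cheek collision (simply $W\kappa$ for a single collision). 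Thus $D\ge cW$ for some $c>0$ by Lemma~\ref{lem:triangle} and (M1), while $\|D\|_{\infty,J_\xi}=O(W)$ and $\Var_{J_\xi}(D)=O(W)$, since $D$ is a bounded polynomial in the flight times $\tau_{i,j}$ (at most two monotone branches each, by Lemma~\ref{lem:tau}), the sines, and the (piecewise monotone) curvatures. Meanwhile the product estimate for variations, fed with the facts above, gives $\|U\|_{\infty,J_\xi}\lesssim W^{3/2}|\xi|^{-3/2}$ and $\Var_{J_\xi}(U)\lesssim\sqrt{W/|\xi|}\cdot W/\xi^2+W/|\xi|\cdot\sqrt{W/|\xi|}\lesssim W^{3/2}|\xi|^{-3/2}$. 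Therefore
$$
\Var_{\tilde\theta}\big(\tfrac1{|\Psi'_{R_i}|}\big)=\Var_{\tilde\theta}\big(\tfrac UD\big)\le\frac{\Var U}{\inf D}+\|U\|_\infty\frac{\Var D}{(\inf D)^2}\lesssim\frac{W^{3/2}|\xi|^{-3/2}}{W}+W^{3/2}|\xi|^{-3/2}\cdot\frac{W}{W^2}\asymp\sqrt W\,|\xi|^{-3/2},
$$
and summing the first displayed inequality over the at most three pieces of $J_\xi$ gives the claimed $O(|\xi|^{-3/2})$.

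The main obstacle is the double-collision branch. There one must (i) extract from \eqref{eq:DPsi} enough of the structure of the denominator $D$ to recognise its leading $W$-term as a multiple of the Lemma~\ref{lem:triangle} quantity at the second (cheek-to-cheek) collision — this is precisely the use of Lemma~\ref{lem:triangle} flagged in Section~\ref{sec:micro} — and thereby obtain $D\ge cW$; and (ii) control $\Var(U)$ and $\Var(D)$ despite the intermediate and exit angles varying extremely rapidly with $\tilde\theta$ on the tiny interval $J_\xi$. The saving grace for (ii) is that only the \emph{sines} of these angles enter $U$ and $D$, and each such sine is monotone over a half-period and hence of $O(1)$ total variation on a monotone branch, so the variation of $1/|\Psi'_{R_i}|$ stays $O(1)$ up to the explicit powers of $W/|\xi|$.
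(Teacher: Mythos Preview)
Your argument is correct and follows essentially the same strategy as the paper's: write $1/|\Psi'_{R_i}|$ as a quotient, bound the denominator below using Lemma~\ref{lem:triangle} (for the double-cheek branch) together with (M1), and extract the $|\xi|^{-3/2}$ decay from the small sines in the numerator via the product/quotient rules for variation combined with the near-grazing comparison of Lemma~\ref{lem:SV}.

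The bookkeeping differs slightly. The paper does not multiply through by $\sin\tilde\theta$; instead it factors
\[
\frac{1}{|\Psi'_{R_i}|}\;=\;\frac{\sin\theta_{i,1}}{(\tau_i+\tau_{i,1})\kappa_{i,1}+\sin\theta_{i,1}}\;\cdot\;\frac{\prod_{j\ge2}\sin\theta_{i,j}}{\frac{\tau_i+\tau_{i,1}}{(\tau_i+\tau_{i,1})\kappa_{i,1}+\sin\theta_{i,1}}\,\Omega+\widehat\Omega}\,,
\]
where $\Omega,\widehat\Omega$ are polynomials in the flight times, sines and curvatures. The first factor has denominator $\asymp(\tau_i+\tau_{i,1})\asymp|\xi|$ and numerator $\lesssim\sqrt{W/|\xi|}$, so both its sup and its variation are $O(|\xi|^{-3/2})$ by the quotient formula~\eqref{eq:varquot}. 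The second factor is shown to have $O(1)$ sup and variation uniformly in $n_i\le N$: here Lemma~\ref{lem:triangle} is used not for the $W$-coefficient (as in your $D$) but to ensure $\inf|\widehat\Omega|>0$ via the product of left-bottom entries of the $DF_{i,k}$, and Lemma~\ref{lem:tau} controls the variation of the $\tau_{i,j}$. This organisation has the minor advantage of not needing the explicit $n_i=1$ versus $n_i=2$ case split---the multi-collision complexity is absorbed into the uniformly bounded second factor---whereas your $U/D$ formulation is more concrete but tied to writing out $D$ branch by branch. Since the statement is asymptotic and $n_i\le2$ once $|\xi|$ is large, both routes land at the same place.
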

	
	\begin{proof}Throughout this proof we suppress the index $\tilde\theta$
		in the variation, and also write $R$ instead of $R_i$.
		Separating the entries with $\tau_i+\tau_{i,1}$ and $(\tau_i+\tau_{i,1})\kappa_{i,1}+\sin \theta_{i,1}$ in the rightmost matrix in \eqref{eq:DPsi}, obtain
		
		\begin{eqnarray}\label{eq:Psi}
			\Psi'_R|_{J_\xi} &=& \frac{\tau_i+\tau_{i,1}}{-\sin \theta_{i,1}} \frac{ \Omega(\sin \theta_{i,1}, \sin \theta_{i,2}, \dots, \sin \theta_{i,n_i-1},
				\tau_i,  \tau_{i,1}, \dots,  \tau_{i,n_i}, \kappa_{i,1}, \dots,  \kappa_{i,n_i}) }{\prod_{j=2}^{n_i} (-\sin \theta_{i,j}) }  \nonumber \\[2mm]
			&& + \ \frac{(\tau_i+\tau_{i,1})\kappa_{i,1}+\sin \theta_{i,1}}{-\sin \theta_{i,1}} 
			\frac{ \widehat\Omega(\sin \theta_{i,1}, \sin \theta_{i,2}, \dots, \sin \theta_{i,n_i-1},
				\tau_i,  \tau_{i,1}, \dots,  \tau_{i,n_i}, \kappa_{i,1}, \dots,  \kappa_{i,n_i}) }{\prod_{j=2}^{n_i} (-\sin \theta_{i,j}) }  \nonumber\\
			&=& \frac{(\tau_i+\tau_{i,1})\kappa_{i,1}+\sin \theta_{i,1}}{-\sin \theta_{i,1}} \times  \\
			&& \Big(
			\frac{\tau_i+\tau_{i,1}}{ (\tau_i+\tau_{i,1})\kappa_{i,1}+\sin \theta_{i,1} } \frac{ \Omega(\sin \theta_{i,1}, \sin \theta_{i,2}, \dots, \sin \theta_{i,n_i-1}, 
				\tau_i,  \tau_{i,1}, \dots,  \tau_{i,n_i}, \kappa_{i,1}, \dots,  \kappa_{i,n_i}) }{\prod_{j=2}^{n_i} (-\sin \theta_{i,j}) }  \nonumber \\[2mm]
			&& + \ 
			\frac{ \widehat\Omega(\sin \theta_{i,1}, \sin \theta_{i,2}, \dots, \sin \theta_{i,n_i-1},
				\tau_i,  \tau_{i,1}, \dots,  \tau_{i,n_i}, \kappa_{i,1}, \dots,  \kappa_{i,n_i}) }{\prod_{j=2}^{n_i} (-\sin \theta_{i,j}) } \Big), \nonumber
		\end{eqnarray}
		where $\Omega$ and $\widehat\Omega$ are  multivariate polynomials of their arguments.
		It follows that
		\begin{eqnarray}\label{eq:1Psi'}
			\frac{1}{\Psi'_R|_{J_\xi}} &=&
			\frac{-\sin \theta_{i,1}}{ (\tau_i+\tau_{i,1}) \kappa_{i,1}+\sin \theta_{i,1}} 
			\cdot \frac{ \prod_{j=2}^{n_i} (-\sin \theta_{i,j}) }{ \frac{ \tau_i+\tau_{i,1} }{ (\tau_i+\tau_{i,1})\kappa_{i,1}+\sin \theta_{i,1}}\, \Omega + \widehat\Omega}.
		\end{eqnarray}
		
		By our assumptions (see Section~\ref{sec:micro}), there are $n_i \leq N$ collisions. So, every  $\Psi_{R_i}|_{J_\xi}$ has a uniform bounded number of pieces of continuity.
		Below we show that the variation of $1/\Psi'_{R_i}$
		on each of these pieces is $O(1/W)$.
		
		Next, we use of the general formula
		\begin{eqnarray}\label{eq:varquot}
			\Var\left(\frac{g}{f}\right) &=& \sup_{x_0 < x_1 < \dots < x_r} \sum_{j=1}^r
			\left| \frac{g(x_j)}{f(x_j)} - \frac{g(x_{j-1})}{f(x_{j-1})}  \right|
			\nonumber \\
			&\leq&
			\sup_{x_0 < x_1 < \dots < x_r} \sum_{j=1}^r \frac{|g(x_j)| \ |f(x_j) - f(x_{j-1})| + |g(x_j)| \ |f(x_j) - f(x_{j-1})|}{|f(x_{j-1})f(x_j)|} \nonumber \\
			&\leq& \frac{\sup|g|\Var(f) + \sup|f| \Var(g)}{\inf|f|^2}.
		\end{eqnarray}
		Applying this for $f = (\tau_i + \tau_{i,1})\kappa_{i,1} + \sin \theta_{i,1} \geq \sqrt{W^2+\xi^2}$ and $g = \sin \theta_{i,1}$, we get
		\begin{eqnarray*}
			\Var\left( \frac{ -\sin \theta_{i,1} }{ (\tau_i+\tau_{i,1})\kappa_{i,1} + \sin \theta_{i,1} } \Big|_{J_\xi}\right)
			&\leq& \frac{1}{W^2 + \xi^2}
			\Big( \sup |\sin \theta_{i,1}| \Var((\tau_i+\tau_{i,1})\kappa_{i,1} + \sin \theta_{i,1} ) \\
			&& + \Var(\sin \theta_{i,1}) \sup((\tau_i+\tau_{i,1})\kappa_{i,1} + \sin \theta_{i,1}) \Big) \\
			&\ll& \frac{1}{W^2 + \xi^2} \left( \sqrt{\frac{W}{|\xi|}} + \sqrt{\frac{W}{|\xi|}}  \sqrt{W^2+\xi^2} \right) \qquad \text{ as } |\xi| \to \infty.
		\end{eqnarray*}
		This bound is summable over all displacements $\xi \in \Z$, and the best upper bound of the sum is independent of $W$.
		This makes $\Var\left(\frac{1}{|\Psi'|}\right) = O(1)$.

		It remains to show that the second factor in \eqref{eq:1Psi'}
		has bounded variation and supremum.
		
		The quotient 
		$\frac{ \tau_i+\tau_{i,1} }{ (\tau_i+\tau_{i,1})\kappa_{i,1}+\sin \theta_{i,1} }$
		is bounded by $\frac{1}{\kappa_{i,1}}$, bounded away from zero, and has at most four branches, so
		the variation is bounded by $4/\kappa_{\min}$.
		
		The factor
		$\Omega = \sum_{\ell=1}^{L_i} \Omega_\ell = \sum_{\ell=1}^{L_i} \prod_{k=2}^{n_i} \Omega_{\ell,k}$,
		where $\Omega_{\ell,k}$ is one of the four entries of the $2 \times 2$ matrix of $DF_k$ (without the prefactor $-1/\sin \theta_{i,j}$) in
		\eqref{eq:DF}, and $L_i$ is some bounded number, depending on the (bounded) number of collisions $n_i$.
		All these functions have bounded variation,
		so $\Var(\Omega) < \infty$, independently of $W$.
		The same holds for $\widehat\Omega$.
		Therefore the denominator
		$\frac{ \tau_i+\tau_{i,1} }{ (\tau_i+\tau_{i,1})\kappa_{i,1}+\sin \theta_{i,1}} \Omega + \hat{\Omega}$
		has bounded variation, and so has the numerator
		$\prod_{j=2}^{n_i} (-\sin \theta_{i,j})$.

		Next, we use \eqref{eq:varquot} for
		$f = \frac{ \tau_i+\tau_{i,1} }{ (\tau_i+\tau_{i,1})\kappa_{i,1}+\sin \theta_{i,1}} \Omega + \hat{\Omega}$ and $g \equiv 1$. For this,
		we need to show that $\inf|f|$ is bounded away from zero.
		This infimum $\inf|f|$ is  positive
		because $\hat{\Omega}$ has only positive terms, including
		$$
		\prod_{k=2}^{n_i} \left(\tau_{i,k} \kappa_{i,k}\kappa_{i,k-1} 
		+ \kappa_{i,k-1} \sin \theta_{i,k} 
		+ \kappa_{i,k} \sin \theta_{i,k-1}\right)$$ 
		obtained from taking the left bottom entries of the $DF_{i,k}$, $k=2, \dots, n_i$, in \eqref{eq:DF}. 
		According to Lemma~\ref{lem:triangle},
		this term is at least $K^{n-1} > 0$ (recall that $n_i \leq N < \infty$ by assumption).
		This ends the proof.
	\end{proof}
	
	An immediate consequence of Lemma~\ref{lem:VarPsi} is
	\begin{corollary}\label{cor:Q}
		There is a constant $C > 0$ such that for those $\xi \sim W/\tan \tilde\theta$ 
		corresponding to $\sin \tilde\theta < \eta$,
		$$ 
		\Var_{\theta}\left(\frac{\sin \tilde\theta}{|\Psi_{R_i}'(\tilde \theta)|}\Big|_{J_\xi}\right)  \leq C |\xi|^{-\frac52}.
		$$
	\end{corollary}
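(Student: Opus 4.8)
The plan is to write the function in question as a product,
$\frac{\sin\tilde\theta}{|\Psi'_{R_i}(\tilde\theta)|}\big|_{J_\xi}=\sin\tilde\theta\cdot\frac{1}{|\Psi'_{R_i}(\tilde\theta)|}\big|_{J_\xi}$, and to bound its variation by the standard product inequality $\Var(fg)\le\|f\|_\infty\Var(g)+\|g\|_\infty\Var(f)$ (the telescoping identity $f(x_j)g(x_j)-f(x_{j-1})g(x_{j-1})=f(x_j)(g(x_j)-g(x_{j-1}))+g(x_{j-1})(f(x_j)-f(x_{j-1}))$, in the same spirit as \eqref{eq:varquot}). This reduces the claim to estimating, on $J_\xi$, the supremum and the $\theta$-variation of each of the two factors $\sin\tilde\theta$ and $1/|\Psi'_{R_i}|$. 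As in the proof of Lemma~\ref{lem:VarPsi}, all variations are taken branch by branch over the $O(1)$ monotone branches $J_{\xi,\ell}$, $\ell\in I_\xi$, lying over $J_\xi$ (three for $|\xi|$ large); on each such branch the change of variable $\tilde\theta=\Psi_{R_i}^{-1}(\theta)$ preserves variation, and extending by $0$ outside its $\theta$-image costs an extra $2\sup|\cdot|$ from the indicator $\1_{J_{\xi,\ell}}$ — so it suffices to control the four quantities below on a single branch.

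The four ingredients are: \emph{(a)} $\sup_{J_\xi}\sin\tilde\theta=O(|\xi|^{-1})$, because on $J_\xi$ one has, by \eqref{eq:xiDef} and the fact that the in-microstructure displacement $\rout_i-\rin_i$ (over a unit-width microstructure) and $R_i$ are bounded, $W/\tan\tilde\theta=\xi+O(1)$, hence $|\tan\tilde\theta|\asymp W/|\xi|$ and $\sin\tilde\theta=O(|\xi|^{-1})$; \emph{(b)} $\Var_\theta(\sin\tilde\theta|_{J_\xi})=O(|\xi|^{-1})$, immediate from (a) since $\sin\tilde\theta<\eta$ keeps $\tilde\theta$ near $0$ or near $\pi$, so $\sin\tilde\theta$ is monotone on $J_\xi$ and its variation on each branch is $\le 2\sup|\sin\tilde\theta|$ (in fact $O(|\xi|^{-2})$ using that $J_\xi$ has $\tilde\theta$-length $O(|\xi|^{-2})$ and $\sin$ is $1$-Lipschitz, but this is not needed); \emph{(c)} $\Var_\theta(1/|\Psi'_{R_i}||_{J_\xi})=O(|\xi|^{-3/2})$, which is exactly Lemma~\ref{lem:VarPsi}; and \emph{(d)} $\big\|1/|\Psi'_{R_i}|\big\|_{\infty,J_\xi}=O(|\xi|^{-3/2})$. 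For (d) I would use \eqref{eq:1Psi'}: there $1/|\Psi'_{R_i}|_{J_\xi}$ equals $\frac{|\sin\theta_{i,1}|}{(\tau_i+\tau_{i,1})\kappa_{i,1}+\sin\theta_{i,1}}$ times the second factor of \eqref{eq:1Psi'}, which is bounded above (as shown in the proof of Lemma~\ref{lem:VarPsi}); the denominator is $\ge\sqrt{W^2+\xi^2}\ge|\xi|$, while the near-grazing analysis behind Lemma~\ref{lem:SV} gives $\sin\theta_{i,1}\lesssim\sqrt{\sin\tilde\theta}=O(|\xi|^{-1/2})$ (for a single cheek collision $\sin\theta_{i,1}=\sin\theta\asymp\sqrt{\sin\tilde\theta}$ or $\asymp\sin^2\tilde\theta$, and for a double collision $\sin\theta_{i,1}\asymp\sin\tilde\theta$), so $1/|\Psi'_{R_i}|=O(|\xi|^{-1/2}/|\xi|)=O(|\xi|^{-3/2})$.

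Combining these via the product inequality gives
\[
\Var_\theta\!\left(\frac{\sin\tilde\theta}{|\Psi'_{R_i}|}\Big|_{J_\xi}\right)
\le \sup_{J_\xi}(\sin\tilde\theta)\,\Var_\theta\!\left(\frac{1}{|\Psi'_{R_i}|}\Big|_{J_\xi}\right)
+\left\|\frac{1}{|\Psi'_{R_i}|}\right\|_{\infty,J_\xi}\Var_\theta\!\left(\sin\tilde\theta\big|_{J_\xi}\right)
= O(|\xi|^{-1})O(|\xi|^{-3/2})+O(|\xi|^{-3/2})O(|\xi|^{-1}),
\]
which is $O(|\xi|^{-5/2})$, with a constant depending only on $W$ and the curvature bounds; summing the $O(1)$ branch contributions preserves this order. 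The only step with genuine content is (d): beyond the trivial bound $|\xi|^{-1}$ for $1/|\Psi'_{R_i}|$ coming from the width of the tube, one gains the extra factor $|\xi|^{-1/2}$ precisely because the first collision angle $\theta_{i,1}$ is itself near-grazing whenever $\sin\tilde\theta<\eta$ — which is exactly the content of (the proof of) Lemma~\ref{lem:SV}. Everything else is the bounded-number-of-branches bookkeeping already carried out for Lemma~\ref{lem:VarPsi}, so I do not expect a serious obstacle here.
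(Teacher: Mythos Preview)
Your proposal is correct and follows essentially the same approach as the paper: split the product via $\Var(fg)\le\|f\|_\infty\Var(g)+\|g\|_\infty\Var(f)$, invoke Lemma~\ref{lem:VarPsi} for $\Var(1/|\Psi'_{R_i}|)=O(|\xi|^{-3/2})$, and use $\Var(\sin\tilde\theta|_{J_\xi})\le\sup_{J_\xi}\sin\tilde\theta=O(|\xi|^{-1})$. The paper's one-line proof leaves your item~(d), the bound $\|1/|\Psi'_{R_i}|\|_{\infty,J_\xi}=O(|\xi|^{-3/2})$, implicit---but this is indeed contained in the proof of Lemma~\ref{lem:VarPsi} (where $\sup|\sin\theta_{i,1}|\lesssim\sqrt{W/|\xi|}$ is used), and your explicit derivation of it via Lemma~\ref{lem:SV} is correct.
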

	
	\begin{proof}
		This follows from Lemma~\ref{lem:VarPsi} with the multiplication with the factor $\sin \tilde \theta$, 
		for which we notice that $\Var (\sin \tilde\theta|_{J_\xi})\le \sup_{\tilde\theta\in J_\xi}(\sin \tilde\theta)$.
	\end{proof}

	\section{Continuity estimates}\label{sec:continuity}
	
	In this section, we obtain the needed continuity estimate for the perturbed average operator $P_t$ defined in~\eqref{eq:avpert}.
	
	\begin{prop}\label{prop:cont} Let $f\in BV$. There exists $C_{BV}>0$ so that for all $\theta\in (0,\pi)$ and all $t\in\R$,
		$$
		\left\|(P_t-P_0)f\right\|_{BV}
		\le C_{BV}\,|t|\, \|f\|_{BV}
		$$
	\end{prop}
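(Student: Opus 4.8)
The plan is to pass to the fibre operators, then split the phase space of $\Psi_R$ into a ``bulk'' region, where $X$ is bounded and all estimates are routine, and a ``near‑grazing'' region, where the tangency assumption (M2) and the variation estimates of Section~\ref{sec:variation} carry the argument. Since $P_tf=\int_0^1 P_{R_i}(e^{itX}f)\,d\nu(R_i)$ and $P_0f=\int_0^1 P_{R_i}f\,d\nu(R_i)$, the two recorded inequalities for $\Var$ and $\|\cdot\|_\infty$ under $\int\!\cdot\,d\nu$ give
\[
\|(P_t-P_0)f\|_{BV}\le \int_0^1\big\|P_{R_i}\big((e^{itX}-1)f\big)\big\|_{BV}\,d\nu(R_i),
\]
so it suffices to bound $\big\|P_R\big((e^{itX}-1)f\big)\big\|_{BV}\le C|t|\,\|f\|_{BV}$ uniformly in $R=R_i$. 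Put $h=(e^{itX}-1)f$. I split the branches of $\Psi_R:\tilde\theta\mapsto\theta$ into the \emph{bulk} branches $\Lambda_\eta$, on which $\sin\tilde\theta\ge\eta$ (by Lemma~\ref{lem:Lambda} there are at most $C_\eta W$ of them, and $|X|\le W/\eta$ there), and the \emph{near‑grazing} branches, on which $\sin\tilde\theta<\eta$; by the six‑branch description preceding Lemma~\ref{lem:SV} the latter consist of finitely many (at most six) collision patterns, each mapped \emph{monotonically} onto an interval of $\theta$'s and subdivided by the displacement into pieces $J_\xi$ with $|\xi|$ large. Write $P_Rh=P_R(h\1_A)+P_R(h\1_B)$ with $A=\{\sin\tilde\theta\ge\eta\}$, $B=\{\sin\tilde\theta<\eta\}$.

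For the bulk part, $\sum_{\Lambda_\eta}\frac{\sin\tilde\theta}{|\Psi_R'|\sin\theta}\1\le P_R\1=1$, so $\|P_R(h\1_A)\|_\infty\le\|h\1_A\|_\infty\le|t|(W/\eta)\|f\|_\infty$; for the variation there are only $\le C_\eta W$ branches, on each of which $|e^{itX}-1|\le|t|W/\eta$, $\Var_{\tilde\theta}(e^{itX})\le|t|\Var_{\tilde\theta}(X)\le 2|t|W/\eta$ (as $X=W\cot\tilde\theta$ is monotone and bounded on $A$), and the transfer weight restricted to the branch is a non‑negative function $\le1$ of uniformly bounded variation — here one uses that on $A$ both $\sin\tilde\theta$ \emph{and} $\sin\theta$ stay bounded away from $0$, which follows from Lemma~\ref{lem:SV} applied to the time‑reversed trajectory in $\mic_i$ (shrinking $\eta$ if needed). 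The product/quotient estimates \eqref{eq:varquot} then give $\|P_R(h\1_A)\|_{BV}\le C(W,\eta)\,|t|\,\|f\|_{BV}$.

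The near‑grazing part is the crux. On $J_\xi$ one has the exact identity $|X(\tilde\theta)|\sin\tilde\theta=W|\cos\tilde\theta|\le W$ (the only place (M2) is used) and, by a short computation from \eqref{eq:pr} (resp.\ \eqref{eq:DPsi}) with $\thetain_i=\tilde\theta$, the sharp expansion $|\Psi_R'(\tilde\theta)|\ge 1+W\kappa_{\min}/(\sin\tilde\theta\,\sin\theta)$ for the single‑ and double‑cheek patterns on both sides. Hence the transfer weight $Q_\xi:=\frac{\sin\tilde\theta}{|\Psi_R'|\sin\theta}$ satisfies $Q_\xi\le\frac{\sin^2\tilde\theta}{W\kappa_{\min}}\lesssim\frac{W}{\kappa_{\min}|\xi|^2}$ and $|X|\,Q_\xi\le\frac{\sin\tilde\theta}{\kappa_{\min}}<\frac{\eta}{\kappa_{\min}}$; since each $\theta$ has at most six near‑grazing preimages, $\|P_R(h\1_B)\|_\infty\le|t|\|f\|_\infty\sup_\theta\sum|X|Q_\xi\le 6|t|\eta\kappa_{\min}^{-1}\|f\|_\infty$. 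For the variation I would organise the $\theta$‑axis by the finitely many patterns: within one pattern the images $\Psi_R(J_\xi)$ tile an interval and — crucially — at a shared endpoint the weight $Q_\xi$, the phase $e^{itX}$ and (a.e.) $f$ match across the two adjacent pieces, so no $\sup|Q_\xi|$‑jumps arise; hence $\Var(P_R(h\1_B))$ equals $\sum_\xi\Var_{J_\xi}\!\big(Q_\xi(e^{itX}-1)(f\circ\tilde\theta)\big)$ plus a convergent sum of jumps of $f$ (each weighted by $\sup|Q_\xi|\,|t|\,|\xi|\lesssim W|t|/(\kappa_{\min}|\xi|)$) plus finitely many pattern‑boundary jumps. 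Each term is bounded by \eqref{eq:varquot} using $|e^{itX}-1|\le|t||X|\le|t|(|\xi|+1)$, $\Var_{J_\xi}(e^{itX})\le|t|$, $\sup_{J_\xi}Q_\xi\lesssim W/(\kappa_{\min}|\xi|^2)$, and $\Var_{J_\xi}(Q_\xi)=o(|\xi|^{-1})$ (of order $|\xi|^{-5/2}$, as in Corollary~\ref{cor:Q}, once one notes that $|\Psi_R'|\sin\theta\gtrsim|\xi|$ has uniformly bounded variation on $J_\xi$, the cheeks having constant curvature by (M2)); the powers of $|\xi|$ coming from $|e^{itX}-1|$ are then beaten by the decay of the weights, the $\xi$‑sums converge, and the contributions carrying $\Var_{J_\xi}(f)$ are summed using $|\xi|^{-1}\le1$ and the disjointness of the $J_\xi$. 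This yields $\|P_R(h\1_B)\|_{BV}\le C(W,\kappa_{\min})\,|t|\,\|f\|_{BV}$, which together with the bulk estimate proves the proposition.

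I expect the main obstacle to be exactly this last variation bound: one must marry the sharp near‑grazing lower bound on $|\Psi_R'|$ (with $\sin\tilde\theta\,\sin\theta$, not $\max(\sin\tilde\theta,\sin\theta)$, in the denominator) and the consequent $|\xi|^{-2}$‑decay of the weights and $o(|\xi|^{-1})$‑decay of their variations with the elementary but essential bookkeeping that $\Var$ of the branch‑sum must be evaluated using the shared‑endpoint cancellation — the naive bound $\sum_\xi\Var(\text{summand extended by }0)$ diverges through its $\sum_\xi\sup|Q_\xi|\,|t|\,|\xi|\sim\sum_\xi|\xi|^{-1}$ term. The interplay of $|e^{itX}-1|\le\min(2,|t||X|)$ with $|X|\asymp|\xi|$ and the weight decay is what produces convergence with a single power of $|t|$.
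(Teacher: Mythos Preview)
Your reduction in the first displayed inequality is correct, but the very next step --- ``it suffices to bound $\|P_R((e^{itX}-1)f)\|_{BV}\le C|t|\|f\|_{BV}$ uniformly in $R$'' --- is false, and the rest of the near-grazing argument collapses with it. The paper's Section~\ref{sec:EstNoAv} is written precisely to rule this out: for any positive $f$ it shows
\[
\big|P_{R}\big((e^{itX}-1)f\big)(\theta)\big|\ \gg\ \frac{|t|\,W}{\sqrt{\sin\theta}}\,\inf|f|,
\]
so already $\|P_R((e^{itX}-1)f)\|_\infty=+\infty$ for each fixed $R$. Two concrete mistakes produce your opposite conclusion. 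First, the assertion that ``each $\theta$ has at most six near-grazing preimages'' confuses collision \emph{patterns} with \emph{branches}: within each pattern there is one branch \emph{per displacement} $\xi$, and for a fixed small $\sin\theta$ the preimage $\tilde\theta$ ranges over $\sin\tilde\theta\in[C_\kappa^{-1}\sin^2\theta,\min(\eta,C_\kappa\sqrt{\sin\theta})]$ by Lemma~\ref{lem:SV}, hence over $O(W/\sin^2\theta)$ values of $\xi$. Second, the ``sharp'' expansion bound $|\Psi_R'|\ge 1+W\kappa_{\min}/(\sin\tilde\theta\,\sin\theta)$ does not hold for the left-cheek pattern: from \eqref{eq:pr}--\eqref{eq:W} one only gets $|\Psi_R'|\gtrsim W\kappa/(\sin\tilde\theta\,\sin\theta_{i,1})$ with $\sin\theta<\sin\theta_{i,1}<\sin\tilde\theta$, so in the worst case $|\Psi_R'|\asymp W\kappa/\sin^2\tilde\theta$, which is the estimate the paper uses in Lemma~\ref{lem:continf} and in the divergence computation \eqref{eq:Pest}. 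Even granting your stronger expansion bound, the per-preimage contribution $|X|Q_\xi\lesssim\sin\tilde\theta\sim W/|\xi|$ summed over $|\xi|$ up to $W/\sin^2\theta$ still diverges logarithmically.

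The paper's proof does \emph{not} pass through the fibre operators. The averaging is used structurally: one swaps $\int_0^1 d\nu(R)$ with the $(\xi,\ell)$-sum and observes that for fixed $\theta$ and fixed $(\xi,\ell)$ only a short interval $p(J_{\xi,\ell},\tilde\theta)\subset[0,1]$ of $R$-values produces a preimage on that branch, with $\nu(p)\lesssim W/\xi^2$ (Lemma~\ref{lem:p}). This extra $\xi^{-2}$ is exactly what your argument is missing; it is what turns the divergent sums of Section~\ref{sec:EstNoAv} into the convergent ones of Lemmas~\ref{lem:continf} and~\ref{lem:ctvar}. Your bulk estimate for $\sin\tilde\theta\ge\eta$ is essentially the content of Lemma~\ref{lem:contaw} and is fine, but the near-grazing part has to be redone using the $p$-intervals rather than a fibrewise bound.
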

	The proof is carried out in the remainder of this section.
	
	\subsection{Continuity estimates using averaging for $\sin\theta<\eta$.}
	

	Due to (M1)-(M4), the microstructures are shaped so that a visiting trajectory has only a bounded number of collisions, so the map
	$\Psi_{R_i}:(0,\pi) \to (0,\pi)$ has finitely many branches {\bf associated to a single} microstructure. However, every microstructure can be reached by taking $\tilde \theta \in \Psi_{R_i}^{-1}(\theta)$ sufficiently close to $0$ or $\pi$. Therefore $\Psi_{R_i}$ has infinitely many branches, but the domains of these branches have only $0$ and $\pi$ as accumulation points.
	If $\sin \tilde\theta < \eta$ where $\eta$ is as in Lemma~\ref{lem:SV}, i.e., $|\xi(\tilde\theta)| \geq \xi_\eta \sim W/\eta$,
	and if the particle enters $\mic_i$ from the left, then there are only three branches associated to each microstructure, representing trajectories that
	\begin{enumerate}
		\item collide only with the left cheek of the microstructure: $\sin \theta < \sin \theta_{i,1} < \sin \tilde \theta$;
		\item collide only with the right cheek of the microstructure:
		$\sin \tilde \theta < \sin \theta_{i,1} < \sin \theta$;
		\item collide once with the left cheek and once with the right cheek of the microstructure:  $\sin \theta_{i,1} < \sin \tilde \theta$ and
		$\sin \theta_{i,2} < \sin\theta$.
	\end{enumerate}
	If the particle enters $\mic_i$ from the right, then
	the three above cases work with ``left'' and ``right'' swapped.
	
	The derivative of those branches $\geq W \kappa_{i,1}/\sin^2 \theta$
	according to \eqref{eq:W}.
	In cases 1. and 2. this bound is sharp, in case 3. there
	is another factor $\approx 1+\frac{s \kappa_{i,2}}{\sin \theta_{i,2}}$ associated to the second reflection
	in the microstructure and $s = \tau_{i,2} \approx 1$ (i.e., the width of the microstructure). Hence the derivative of the branch of case 3. is much larger.
	%
	
	\subsubsection{Estimating without using averaging}\label{sec:EstNoAv}
	Recall that $X(\theta) = W/\tan \theta$ and that
	\begin{equation}\label{eq:psm}
		P_{R_i} f \left(e^{itX}-1\right)(\theta) =
		\sum_{\xi \in \Z, \ell \in \Lambda_{\xi}}\frac{ f(\Psi_{R_i}^{-1}(\theta)) \sin(\Psi_{R_i}^{-1}(\theta))
			\Big( e^{ itX( \Psi_{R_i}^{-1}(\theta) )} - 1 \Big)}
		{|\Psi'_{R_i}( \Psi_{R_i}^{-1}(\tilde \theta))| \sin \theta}
		\1_{J_{\xi,\ell}}(\Psi_{R_i}^{-1}(\theta)).
	\end{equation}
	The continuity estimate of the transfer operator involves \eqref{eq:PR}
	with an extra factor $|e^{itX}-1| \geq |tX|/2$ for $X = W/\tan \Psi_{R_i}^{-1}(\theta)$. The estimate below suggests that without averaging,
	there is no hope to obtain the desired continuity estimate.
	For $\sin \theta \to 0$,
	using the only the ``left cheek'' where $\sin \tilde \theta \geq \sin \theta_{i,1} \geq \sin \theta$ in \eqref{eq:W}, we obtain, say for a positive $f$:
	\begin{align} \left|P_{R_i} f\left(e^{itX}-1\right) (\theta) \right|&
		\gg \sum_{\xi = W\max\{\frac{1}{\eta}, \frac{1}{\sqrt{ C_\kappa \sin \theta} } \} }^{\frac{W}{\sin \theta}}
		\frac{ f(\tilde \theta) \sin(\tilde \theta) \ |t| W/|\tan \tilde \theta|}
		{2\, |\Psi'_{R_i}(\tilde \theta)| \sin \theta}
		\1_{\xi(\tilde\theta) = \xi} \nonumber\\
		&\gg
		|t| W \sum_{\xi = W \max\{\frac{1}{\eta}, \frac{1}{ \sqrt{C_\kappa \sin \theta} } \}}^{\frac{W}{\sin \theta}}
		\frac{ f(\tilde \theta) |\cos(\tilde \theta)| }
		{2\, (W \kappa /\sin^2 \tilde \theta) \sin \theta}
		\1_{\xi(\tilde\theta) = \xi}  \nonumber  \\
		&\gg |t| W
		\sum_{\xi = W \max\{ \frac{1}{\eta}, \frac{1}{\sqrt{C_\kappa \sin \theta}} \}}^{\frac{W}{\sin \theta}}
		\frac{ \sin^2 \tilde \theta f(\tilde \theta)  }
		{2W \kappa \sin \theta}
		\1_{\xi(\tilde\theta) = \xi}\label{eq:Pest} \\
		&\gg \frac{|t| W}{2\kappa_{\max} \sin \theta} \sum_{\xi = W\max\{ \frac{1}{\eta}, \frac{1}{\sqrt{C_\kappa\sin \theta}} \}}^{\frac{W}{\sin \theta}}
		\frac{W}{\xi^2} f(\tilde \theta) \nonumber \gg \inf|f| \frac{|t| \sqrt{C_\kappa} W}{4 \kappa_{\max} \sqrt{\sin \theta}}, \nonumber
	\end{align}
	so this blows up as $\sin \theta \to 0$. This shows that the averaging in $Pf$ is crucial to obtain a useful continuity estimate.

	\subsubsection{Estimates using averaging}
	\label{sec:p}

	The main idea of exploiting the averaging for small values of $\sin \theta$
	is that 
	the integration over $d\nu(R_i)$ will be over a small subinterval $p(J_\xi,\tilde\theta)$
	of $[0,1]$, which leads to a gain of a small factor $\nu(p(J_\xi,\tilde\theta))$ inside the sum in~\eqref{eq:psm}.
	By our assumption on $\nu$, this is comparable to the length $|p(J_\xi,\tilde\theta)|$,
	and this multiplication will lead to bounded sums, as argued below.
	
	If the exit angle $\theta = \thetaout_i$ is fixed, and the displacement $|\xi| > \xi_\eta$, then the inverse map $\Psi_{R_i}^{-1}$ has only three branches. But not all entrance positions
	agree with these branches. Depending on whether we look at the left cheek branch $J_{\xi,L}$, double cheek branch $J_{\xi,D}$ or right cheek branch $J_{\xi,R}$, there is a different interval of possible entrance positions.
	This means that a different subinterval $p(J_{\xi, \cdot}, \tilde\theta) \subset [0,1]$ 
	of $R_i$-values such that the random shift of the position of $\mic_i$ realizes the required entrance.

	In the following illustrating computation, all summands are non-negative,
	and thus, we can swap an integral and an infinite sum. 
	Recall from Lemma~\ref{lem:SV} that $\xi_\eta  \sim W/\eta$ is a lower bound for the absolute value of all displacements when $\sin\tilde\theta < \eta$.
	
	\begin{eqnarray}\label{eq:avadv}
		\nonumber Pf(\theta) \Big|_{\{ \sin\tilde\theta < \eta\}} &=&
		\int_0^1 \sum_{\Lambda_\eta} \frac{f(\tilde \theta)}{|\Psi'_{R_i}(\tilde \theta)|} \frac{\sin \tilde \theta}{\sin \theta} \1_{J_{\xi,\ell}}(\tilde \theta)\, d\nu(R_i) \\
		&=&
		\int_0^1 \sum_{|\xi| \geq \xi_\eta}
		\sum_{\ell \in \{L,R,D\}} \frac{f(\tilde \theta)}{|\Psi'_{R_i}(\tilde \theta)|} \frac{\sin \tilde \theta}{\sin \theta} \1_{J_{\xi,\ell}}(\tilde \theta)\, d\nu(R_i) \nonumber\\
		&=&
		\sum_{|\xi| \geq \xi_\eta}
		\sum_{\ell \in \{L,R,D\}}  \int_0^1 \frac{f(\tilde \theta)}{|\Psi'_{R_i}(\tilde \theta)|} \frac{\sin \tilde \theta}{\sin \theta} \1_{J_{\xi,\ell}}(\tilde \theta) \, d\nu(R_i) \nonumber\\
		&=&
		\sum_{|\xi| \geq \xi_\eta}
		\sum_{\ell \in \{ L,R,D \} }  \int_{p(J_{\xi,\ell},\tilde\theta)} \frac{f(\tilde \theta)}{|\Psi'_{R_i}(\tilde \theta)|} \frac{\sin \tilde \theta}{\sin \theta} \1_{J_{\xi,\ell}}(\tilde \theta) \, d\nu(R_i).
	\end{eqnarray}
	
	The interval $p=p(J_\xi,\tilde\theta)$ is portrayed in Figure~\ref{fig:p}.

	\begin{figure}[ht]
		\begin{center}
			\begin{tikzpicture}[scale=0.9]
				\draw[->] (-0.3, 7) arc (360:330:0.5); \node at (-0.2, 7.3) {\small $\tilde \theta_+$};
				\draw[->] (0.1, 7) arc (360:330:1.4); \node at (0.3, 7.3) {\small $\tilde \theta_-$};
				\draw[-] (7.2, 1) arc (180:60:0.3); \node at (7.5, 1.55) {\small $\theta$};
				\draw[->] (-2,1) -- (13,1); \node at (-1.6, 1.2) {\small $\ell_0$};
				\draw[->] (-2,7) -- (13,7);
				\draw[<->] (11, 1.2) -- (11,6.8); \node at (11.3, 4) {\small $W$};
				\node at (-1.6, 7.2)  {\small $\ell_W$};
				\draw[-, thick] (7,1.03) -- (7.4, 1.03);
				\node at (7.2, 1.3)  {\small $p$};
				\draw[->] (-0.8,7) --  (7.5, 0.9) -- (10.6,5);
				\draw[->] (-1.2,7) --  (7.34, 0.8) -- (10.5,5);
				\draw[-] (5, 0) arc (0:90:1); \node at (4.5, 0.5) {\small $z_1$};
				\draw[-] (7.93, 1) arc (90:180:1); \node at (7.5, 0.5) {\small $z_3$};
				\draw[-] (7.05, -0.1) arc (30:150:1.2);
			\end{tikzpicture}
			\caption{The interval $p=p(J_\xi,\tilde\theta) \subset \Gamma_0$. We took $\sin \theta$ far from $0$ to make the picture clearer.}\label{fig:p}
		\end{center}
	\end{figure}
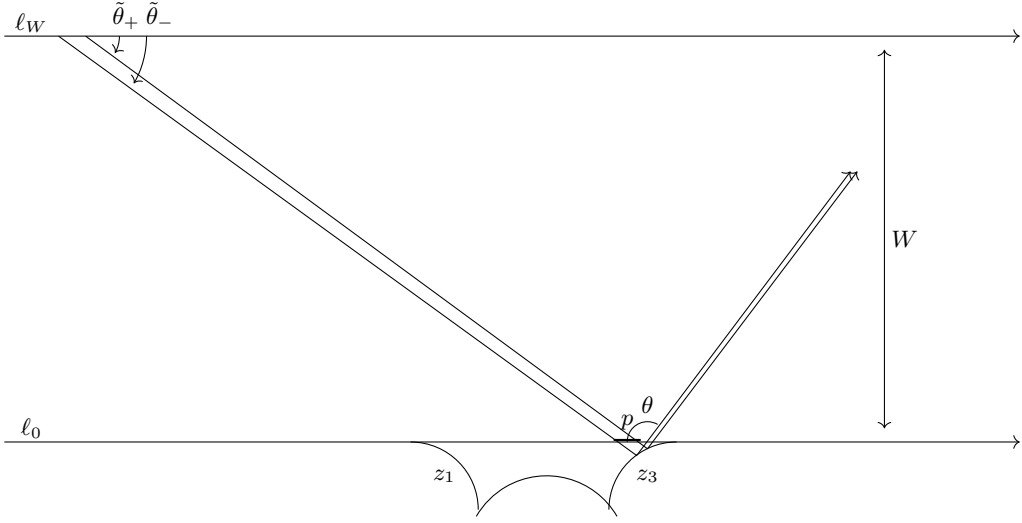

	In words,~\eqref{eq:avadv} tells us that estimating the integrand by its supremum (or its variation, if that is what we are interested in), we can replace the integral by a multiplication of $\nu(p(J_{\xi,j},\tilde\theta)) \leq h^+ \, |p(J_{\xi,j},\tilde\theta)|$ for $h^+ := \sup \frac{d\nu}{d\Leb}$, which gives the mentioned factor in the estimates of the sums.
	More precisely, using~\eqref{eq:avadv}, 
	\begin{align} \label{eq:psm2}
		\Big\|P_{R_i} f &  \left(e^{itX}-1\right) \Big|_{\{ \sin\tilde\theta < \eta\}}\Big\|_\infty \nonumber \\
		&\le \sup_{\theta}
		\sum_{\xi = W(C_\kappa^{-1} \sin \theta)^{-1/2}}^{W(C_\kappa\sin \theta)^{-2}}
		\left|\frac{ f(\tilde \theta) \sin(\tilde \theta) \ |t| W/|\tan \tilde \theta|}
		{| \Psi_{R_i}'(\tilde \theta)| \sin \theta}\right| \, \nu\left(p(J_\xi,\theta)\right)\,
		\1_{\xi(\tilde\theta) = \xi}.
	\end{align}
	
	A similar argument can be used to bound the variation, again starting from \eqref{eq:avadv}.
	The precise details are provided in Section~\ref{subsec:varsmall}.
	
	Recall that $p(J_{\xi,L/R},\tilde\theta)$ denotes the interval in $[0,1]$ obtained from looking at the left or right cheek branches, while $p(J_{\xi,D},\tilde\theta)$
	denotes the interval coming from looking at the double cheek branch.
	
	\begin{lemma}\label{lem:p}
		The following estimates for the left, right and double cheek collisions hold:
		$$
		\nu(p(J_{\xi,\ell},\tilde\theta)) \leq h^+ |p(J_{\xi,\ell},\tilde\theta)| \leq \frac{h^+ \, \max\{ |\tan\tilde \theta|^2 : \xi(\tilde\theta) = \xi\}}{2\kappa_{\min} W}
		\leq \frac{h^+ W}{2\kappa_{\min}}\frac{1}{ (|\xi|-1)^2},
		$$
		where $h^+ = \sup \frac{d\nu}{d\Leb}$ and  $\ell \in \{ L,R,D \}$.
	\end{lemma}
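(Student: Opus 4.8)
The plan is to establish the three displayed inequalities in order; only the middle one carries geometric content.

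\emph{The outer bounds.} The estimate $\nu(p(J_{\xi,\ell},\tilde\theta))\le h^+|p(J_{\xi,\ell},\tilde\theta)|$ is just the definition of $h^+=\sup\frac{d\nu}{d\Leb}$. For the last inequality, if $\xi(\tilde\theta)=\xi$ then, since the horizontal displacement accrued inside $\mic_i$ lies in $[0,1]$ (the open side $\Gamma_{i,0}$ has unit length) and $\xi$ is an integer part in~\eqref{eq:xiDef}, one has $W/|\tan\tilde\theta|\ge|\xi|-1$, hence $\tan^2\tilde\theta\le W^2/(|\xi|-1)^2$; inserting this into the middle expression gives the right-hand one.

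\emph{The middle bound.} Fix the exit angle $\theta$ (the argument of $Pf$ in~\eqref{eq:avadv}) and a branch label $\ell\in\{L,R,D\}$. By~\eqref{eq:randomm} the random number $R_i\in[0,1]$ is exactly the entry coordinate $r_{i,0}=\rin_i-m_i$ of the trajectory inside $\Gamma_{i,0}$, so, the shape of $\mic_i$ being fixed, the billiard map in $\mic_i$ near the open side is a function of the relative coordinates $(R_i,\tilde\theta)$ alone; writing $\theta(R_i,\tilde\theta)$ for its angle component, the $\ell$-branch of $\Psi_{R_i}^{-1}$ at $\theta$ is the curve $\theta(R_i,\tilde\theta)=\theta$ and $p(J_{\xi,\ell},\tilde\theta)=\{R_i:\xi(\tilde\theta(R_i))=\xi\}$, with
\[
\left|\frac{dR_i}{d\tilde\theta}\right|_{\theta}=\frac{|\partial_{\tilde\theta}\theta|}{|\partial_{R_i}\theta|}
\]
along that curve. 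Since $\sin\tilde\theta<\eta$ the trajectory meets only the cheeks, so for $\ell=L$ (one collision with the left cheek, curvature $\kappa:=\kappa_{i,1}\ge\kappa_{\min}$) the microstructure map is that collision $F_{i,1}$ followed by the exit flight $F_{i,2}$ back to $\Gamma_{i,0}$, with $\kappa_{i,0}=\kappa_{i,2}=0$; multiplying the two matrices~\eqref{eq:DF} and reading the bottom row gives $\partial_{R_i}\theta=\frac{2\kappa\sin\tilde\theta}{\sin\theta_{i,1}}$ and $\partial_{\tilde\theta}\theta=\frac{2\kappa\tau_{i,1}+\sin\theta_{i,1}}{\sin\theta_{i,1}}$, hence
\[
\left|\frac{dR_i}{d\tilde\theta}\right|_{\theta}=\frac{2\kappa\tau_{i,1}+\sin\theta_{i,1}}{2\kappa\sin\tilde\theta}\le\frac{1}{2\kappa_{\min}},
\]
using the case ordering $\sin\theta_{i,1}<\sin\tilde\theta$ and $2\kappa\tau_{i,1}\le\sin\tilde\theta-\sin\theta_{i,1}$ (a near-grazing estimate as in the proof of Lemma~\ref{lem:SV}, via $\tau_{i,1}\sin\tilde\theta=\kappa^{-1}(1-\cos\beta)$ and $\beta\lesssim\tilde\theta$); the right-cheek branch is mirror-symmetric and the double-cheek branch satisfies the same bound with an extra, derivative-decreasing, reflection. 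As $\tilde\theta(R_i)$ is strictly monotone, with $J_\xi:=\{\tilde\theta:\xi(\tilde\theta)=\xi\}$ this yields $|p(J_{\xi,\ell},\tilde\theta)|=\int_{\tilde\theta(R_i)\in J_\xi}|dR_i/d\tilde\theta|_\theta\,d\tilde\theta\le|J_\xi|/(2\kappa_{\min})$. Lastly, on $J_\xi$ the map $\xi(\cdot)$ equals the integer part of $W/\tan(\cdot)$ up to an additive correction of modulus $\le 1$ with angle-derivative $O(1/\kappa_{\min})$; since $\frac{d}{d\tilde\theta}(W/\tan\tilde\theta)=-W/\sin^2\tilde\theta=-W/\tan^2\tilde\theta-W$, for $W$ large $|\xi'(\tilde\theta)|\ge W/\tan^2\tilde\theta$ on $J_\xi$, so $|J_\xi|\le\max_{J_\xi}\tan^2\tilde\theta/W$. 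Combining the last two bounds gives the middle inequality.

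\emph{Main obstacle.} The crux is the bound $|dR_i/d\tilde\theta|_\theta\le 1/(2\kappa_{\min})$ with the correct constant, together with the bookkeeping needed to run it uniformly over the three collision patterns and both entry sides; one must also check that the $O(1)$ corrections due to the unit width of $\Gamma_{i,0}$ do not spoil the estimate of $|J_\xi|$, which works only because $W$ is large throughout.
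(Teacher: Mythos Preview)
Your route via the implicit function theorem and the collision matrices~\eqref{eq:DF} is a natural alternative to the paper's direct angle-chasing, and for the \emph{left-cheek} branch your computation is essentially correct: one indeed has $\partial_{R_i}\theta=\frac{2\kappa\sin\tilde\theta}{\sin\theta_{i,1}}$, $\partial_{\tilde\theta}\theta=\frac{2\kappa\tau_{i,1}+\sin\theta_{i,1}}{\sin\theta_{i,1}}$, and the inequality $2\kappa\tau_{i,1}+\sin\theta_{i,1}\le\sin\tilde\theta$ can be verified from $\beta<\tilde\theta$ and $\tau_{i,1}\sin\tilde\theta\approx\kappa^{-1}(1-\cos\beta)$, giving $|dR_i/d\tilde\theta|_\theta\le 1/(2\kappa_{\min})$.

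The gap is in the appeal to ``mirror symmetry'' for the right-cheek branch. Left-right reflection sends the pair (left-entering particle, left cheek) to (right-entering particle, right cheek); it does \emph{not} send it to (left-entering particle, right cheek). So the three cases $L$, $R$, $D$ for a left-entering particle are not permuted among themselves by the reflection, and the right-cheek case must be treated on its own. There your key inequality fails outright: for $\ell=R$ one has $\sin\theta_{i,1}>\sin\tilde\theta$ (this is the ordering in item~2 of Section~\ref{sec:continuity}), so
\[
\left|\frac{dR_i}{d\tilde\theta}\right|_{\theta}=\frac{2\kappa\tau_{i,1}+\sin\theta_{i,1}}{2\kappa\sin\tilde\theta}>\frac{\sin\theta_{i,1}}{2\kappa\sin\tilde\theta}>\frac{1}{2\kappa},
\]
and in fact this ratio diverges like $(\sin\tilde\theta)^{-1/2}$ as $\tilde\theta\to 0$ (since $\sin\theta_{i,1}\sim\sqrt{\sin\tilde\theta}$ by Lemma~\ref{lem:SV}). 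Concretely, with $\kappa=1$, $\pi-\theta=0.1$, $\tilde\theta=0.01$ one gets $\beta=(\pi-\theta-\tilde\theta)/2=0.045$, $\tau_{i,1}\approx\beta^2/(2\tilde\theta)\approx 0.10$, $\sin\theta_{i,1}\approx 0.055$, and $|dR_i/d\tilde\theta|_\theta\approx 13$, not $1/2$. Integrating this over $J_\xi$ would overshoot the target bound by the same large factor, so the argument as written does not close. The same difficulty infects the double-cheek case, since the second collision there is of right-cheek type.

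The paper avoids this by working geometrically instead of through the derivative matrices: it fixes $\theta$, identifies the extreme incoming angles $\tilde\theta_\pm$ (with $|W/\tan\tilde\theta_+-W/\tan\tilde\theta_-|\le 1$), and uses the triangle relations in Figures~\ref{fig:beta} and~\ref{fig:beta2} to get $|\beta_+-\beta_-|=\tfrac12|\tilde\theta_+-\tilde\theta_-|$, then bounds $|p|$ by the variation $|\sin\beta_+-\sin\beta_-|/\kappa$ of the collision-point abscissa. That argument treats the right-cheek and double-cheek cases directly rather than by symmetry, which is precisely where your approach runs into trouble.
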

	
	\begin{figure}[ht]
		\begin{center}
			\begin{tikzpicture}[scale=1]
				\draw[->, thick] (1,5) -- (6.3, 3.6) -- (7.9,5.5);
				\draw[-] (8, 4) arc (90:150:4);
				\draw[-] (8,0) -- (5.65,5);
				\draw[-] (8,0) -- (8,4);
				\draw[-, dotted] (0,5) -- (10,5);
				\draw[-, dotted] (0,4) -- (10,4);
				\node at (0, 4.2) {\small $\ell_0$};
				\draw[->] (2, 5) arc (360:330:0.5); \node at (2.45, 4.8) {\small $\tilde \theta_+$};
				\draw[->] (7.3, 5) arc (180:53:0.2); \node at (7.4, 5.4) {\small $\theta$};
				\draw[->] (8, 0.7) arc (90:115:0.7); \node at (7.8, 0.8)  {\small $\beta$};
				\draw[-] (6.15, 3.9) arc (110:45:0.3); \node at (6.32, 4.1)  {\small $\alpha$};
				\node at (6.3, 3.6) {\small $\bullet$};   \node at (6.2,3.3) {\small $P$};
				\node at (1, 5) {\small $\bullet$};   \node at (1,5.3) {\small $A$};
				\node at (5.65, 5) {\small $\bullet$};   \node at (5.65,5.3) {\small $B$};
				\node at (7.5, 5) {\small $\bullet$};   \node at (7.5,4.7) {\small $C$};
			\end{tikzpicture}
			\caption{Relations between $\alpha,\beta,\theta$ and $\tilde \theta$.}\label{fig:beta}
		\end{center}
	\end{figure}
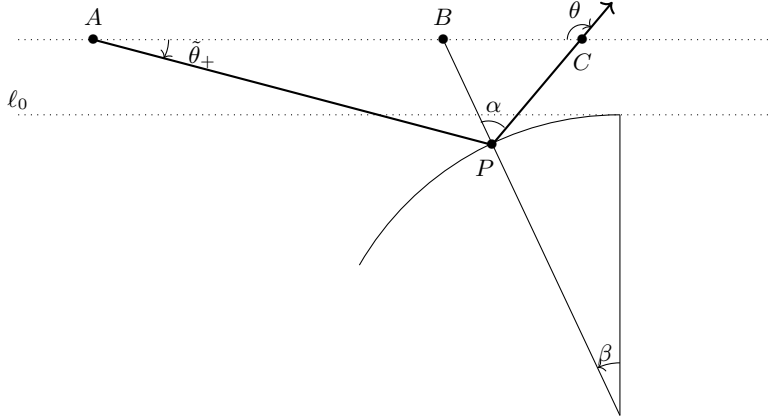

	\begin{proof}
		We start with the computation for the right cheek branch.
		Let $\tilde \theta_+$ and $\tilde \theta_-$ be the angles whose trajectories correspond to the end-points of $p$.
		The angles between the outgoing normal vectors with the vertical at the corresponding collision points at the left or right cheek of the microstructure are $\beta_+$ and $\beta_-$.
		Let $\alpha_+$ and $\alpha_-$ be the angles that the outgoing trajectories makes with the normal vectors at the two collision points.
		Adding up the angles in the triangles $APC$ and $BPC$ in
		Figure~\ref{fig:beta} (and doing the same for the trajectory with incoming angle $\tilde\theta_-$), we obtain
		$$
		\begin{cases}
			\theta = 2\alpha_+ + \tilde\theta_+, &\alpha_+ = \beta_+ + \theta - \frac{\pi}{2},\\
			\theta = 2\alpha_- + \tilde\theta_-,
			& \alpha_- = \beta_- + \theta - \frac{\pi}{2},
		\end{cases}
		\quad \text{ and } \qquad
		\left|\frac{W}{\tan \tilde \theta_+} -
		\frac{W}{\tan \tilde \theta_-}\right| \leq 1.
		$$
		So
		$$
		|\alpha_+-\alpha_-| =
		|\beta_+-\beta_-| = \frac12\, |\tilde \theta_+-\tilde\theta_-|
		\leq \frac{|\tan \tilde \theta^+ \tan \tilde \theta^-|}{2W}.
		$$
		Thus, by (M2),
		\begin{align*}
			|p(J_{\xi,R},\tilde\theta)| \leq \left|\frac{\sin \beta_+}{\kappa} -
			\frac{\sin \beta_-}{\kappa} \right| \ll \frac{\max\{ |\tan \tilde \theta|^2 : \xi(\tilde\theta) = \xi\} }{2\kappa W}.
		\end{align*}
		
		The estimate for the left cheek branch is the same.
		
		For the double cheek branch we have the following relations between the angles indicated in Figure~\ref{fig:beta2} (where we abbreviated $\theta_1 = \theta_{i,1}$ and $\theta_2 = \theta_{i,2})$:
		$$
		\tilde \theta = \beta_1+(\pi-\theta_1), \ \pi - \theta = \beta_2+\pi-\theta_2,\ \theta_1 - \beta_1 = \beta_2 - (\pi-\theta_2).
		$$
		This gives $\pi-\theta + \tilde \theta = 2(\beta_1+\beta_2) =: 2\beta$.
		As before, let $\tilde \theta^{\pm}$ be angle corresponding to
		the left-most and right-most entrance positions satisfying $\xi(\tilde\theta^+) = \xi(\tilde\theta^-) = \xi$,
		and let $\beta_1^\pm$ and $\beta_2^\pm$ and $\beta^\pm = \beta_1^\pm + \beta_2^\pm$ indicate the angle of the corresponding collision points.
		The collision points themselves satisfy $r_{i,1} \sim \beta_1/\kappa_1$ and
		$r_{i,2} \sim (\pi - \beta_2)/\kappa_2$ as arc-lengths of $\Gamma_{i,1}$ and $\Gamma_{i,2}$ with local curvatures $\kappa_1$ and $\kappa_2$,
		respectively. Therefore
		$$
		\sgn \frac{d \beta_1}{d\tilde\theta} = \sgn \frac{d r_{i,1}}{d\tilde\theta} = - \sgn \frac{dr_{i,2}}{d\tilde\theta}  =  \sgn \frac{d \beta_2}{d\tilde\theta}.
		$$
		This shows that $|\beta_1^+ - \beta_1^-| \leq |\beta^+ - \beta^-|$.
		
		As before $\left| \frac{W}{\tan \tilde \theta^+} - \frac{W}{\tan \tilde \theta^-} \right| \leq 1$.
		This gives, again due to (M2),
		$$
		|p(J_{\xi,D},\tilde\theta)| \leq \frac{|\sin \beta_1^+ - \sin \beta_1^-|}{\kappa_1} \leq \frac{|\beta_2-\beta_1|}{\kappa_1}
		\leq \frac{|\tilde\theta^+ - \tilde\theta^-|}{2\kappa_1}
		\leq \frac{\max\{ |\tan \tilde \theta|^2 : \xi(\tilde\theta) = \xi\}}{2\kappa_{\min} W}.
		$$
		Finally, we estimate the $\nu$-measure of the interval $p(J_{\xi,R/L/D}, \tilde\theta)$ by $h^+ \, |p(J_{\xi,R/L/D},\tilde\theta)|$, and recall that $|\tan \tilde\theta| \leq W/(|\xi(\tilde\theta)|-1)$.
	\end{proof}

	\begin{figure}[ht]
		\begin{center}
			\begin{tikzpicture}[scale=1.3]
				\draw[->, thick] (1.3,5)--(2.36,3.24) -- (6.3, 3.6) -- (7.9,4.7);
				\draw[<-] (8, 4) arc (90:150:4);
				\draw[-] (8,0) -- (5.65,5);
				\draw[-] (8,0) -- (8,4);
				\draw[->] (0, 4) arc (90:30:4);
				\draw[-] (0,0) -- (3.6,5);
				\draw[-] (0,0) -- (0,4);
				\draw[-] (0.5,4.45) -- (3.5,2.54);
				\draw[-] (8,4.3) -- (5,3.1);
				\draw[-, dotted] (-1,4) -- (9,4);
				\node at (-1, 4.2) {\small $\ell_0$};
				\draw[->] (2.4, 4) arc (360:305:0.5); \node at (2.55, 3.8) {\small $\tilde \theta$};
				\draw[->] (6.7, 4) arc (180:50:0.2); \node at (6.8, 4.35) {\small $\theta$};
				\draw[->] (8, 0.8) arc (90:115:0.8); \node at (7.8, 1)  {\small $\beta_2$};
				\draw[->] (0, 0.8) arc (90:56:0.8); \node at (0.3, 1)  {\small $\beta_1$};
				\draw[<-] (7.3, 4.3) arc (50:20:0.5); \node at (7.8, 4.25)  {\small $\pi-\theta_2$};
				\draw[<-] (3, 3.3) arc (17:-30:0.5); \node at (3.4, 3.05)  {\small $\pi-\theta_1$};
				\node at (6.3, 3.6) {\small $\bullet$};   \node at (6.2,3.25) {\small $r_{i,2}$};
				\node at (2.36,3.24) {\small $\bullet$};   \node at (2.4,2.8) {\small $r_{i,1}$};
			\end{tikzpicture}
			\caption{Relations between $\beta_1,\beta_2,\theta_1,\theta_2,\theta$ and $\tilde \theta$.}\label{fig:beta2}
		\end{center}
	\end{figure}
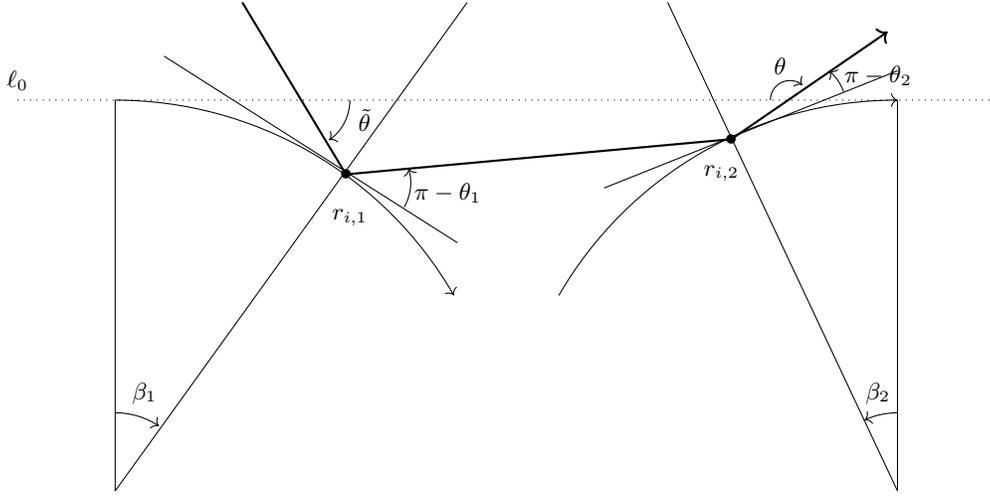

	\subsubsection{Estimating the $\|\cdot\|_\infty$ norm}
	
	\begin{lemma}\label{lem:continf}
		Assume $\sin \tilde\theta = \sin \Psi_{R_i}^{-1}(\theta) < \eta$.
		There exists $C_\infty>0$ (independent of $\theta$) so that for all $t\in\R$
		and $f\in BV$,
		$$
		\left\|(P_t-P_0)f|_{\{\sin\tilde\theta < \eta\}}\right\|_{\infty}
		\le C_\infty\, |t|\, \|f\|_{\infty}.
		$$
	\end{lemma}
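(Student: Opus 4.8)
The plan is to take the bound \eqref{eq:psm2} as the starting point and to show that the sum on its right-hand side is $O(|t|\,\|f\|_\infty)$ uniformly in $\theta$. Write $\tilde\theta=\Psi_{R_i}^{-1}(\theta)$ and recall that $\sin\tilde\theta<\eta$ forces $|\xi(\tilde\theta)|\ge\xi_\eta\sim W/\eta$ and leaves only the left-, right- and double-cheek branches of $\Psi_{R_i}$, so there are at most three branches per displacement $\xi$. In \eqref{eq:psm2} the numerator already contains $\sin\tilde\theta\cdot|t|W/|\tan\tilde\theta|=|t|W|\cos\tilde\theta|\le|t|W$ (using $|e^{itX}-1|\le|t|\,|X(\tilde\theta)|$ and $X(\tilde\theta)=W/\tan\tilde\theta$), so with $|f|\le\|f\|_\infty$ each $\xi$-term is at most $\dfrac{|t|W\,\|f\|_\infty}{|\Psi'_{R_i}(\tilde\theta)|\,\sin\theta}\;\nu\big(p(J_\xi,\theta)\big)$, and it remains to control these two factors.

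For the derivative I use the expansion bound \eqref{eq:W}, which gives $|\Psi'_{R_i}(\tilde\theta)|\ge W\kappa_{\min}/(\sin\tilde\theta\,\max\{\sin\tilde\theta,\sin\theta\})$, and I distinguish the case $\sin\theta\le\sin\tilde\theta$ (always true on the left-cheek branch, possibly true on the double-cheek branch) from the case $\sin\tilde\theta<\sin\theta$ (the right-cheek branch, and the other double-cheek possibility). In the first case $\max\{\sin\tilde\theta,\sin\theta\}=\sin\tilde\theta$, so $\dfrac{W}{|\Psi'_{R_i}(\tilde\theta)|\,\sin\theta}\le\dfrac{\sin^2\tilde\theta}{\kappa_{\min}\sin\theta}\le\dfrac{C_\kappa^2}{\kappa_{\min}}$, the last step being the right inequality of \eqref{eq:theta} in Lemma~\ref{lem:SV}; note that this is exactly where the dangerous factor $W$ from $X$ is cancelled. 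In the second case $\max\{\sin\tilde\theta,\sin\theta\}=\sin\theta$, so $\dfrac{W}{|\Psi'_{R_i}(\tilde\theta)|\,\sin\theta}\le\dfrac{\sin\tilde\theta}{\kappa_{\min}}\le\dfrac{W}{\kappa_{\min}(|\xi|-1)}$, using $\sin\tilde\theta\le|\tan\tilde\theta|\le W/(|\xi|-1)$. Lemma~\ref{lem:p} bounds $\nu\big(p(J_\xi,\theta)\big)\le h^+W/(2\kappa_{\min}(|\xi|-1)^2)$ with $h^+=\sup\frac{d\nu}{d\Leb}$ (and the same bound holds for each of the three cheek branches). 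Multiplying, the $\xi$-contribution is $\lesssim\dfrac{|t|\,h^+C_\kappa^2}{\kappa_{\min}^2}\cdot\dfrac{W}{(|\xi|-1)^2}\,\|f\|_\infty$ in the first case and $\lesssim\dfrac{|t|\,h^+}{\kappa_{\min}^2}\cdot\dfrac{W^2}{(|\xi|-1)^3}\,\|f\|_\infty$ in the second.

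Finally I sum over $|\xi|\ge\xi_\eta$. Since $\xi_\eta\sim W/\eta$ and $\sum_{n\ge N}n^{-2}=O(1/N)$, $\sum_{n\ge N}n^{-3}=O(1/N^2)$, we get $\sum_{|\xi|\ge\xi_\eta}\frac{W}{(|\xi|-1)^2}=O(\eta)$ and $\sum_{|\xi|\ge\xi_\eta}\frac{W^2}{(|\xi|-1)^3}=O(\eta^2)$, uniformly in $\theta$ and in $W$. Adding the (at most three) branches and the two cases yields $\big\|(P_t-P_0)f|_{\{\sin\tilde\theta<\eta\}}\big\|_\infty\le C_\infty\,|t|\,\|f\|_\infty$ with $C_\infty$ depending only on $\eta,\kappa_{\min},C_\kappa$ and $h^+$. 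The crux, and the reason the averaging is indispensable (compare the divergent estimate \eqref{eq:Pest} obtained without it), is precisely the twofold disappearance of the factor $W$ carried by $X=W/\tan\theta$: one part is absorbed by the $W$ in the expansion bound \eqref{eq:W} via $\sin^2\tilde\theta\le C_\kappa^2\sin\theta$, and the remaining part by the smallness $\nu\big(p(J_\xi,\theta)\big)=O(W/|\xi|^2)$ of the admissible $R_i$-window, a gain only visible after integrating in $R_i$. The sole delicate point is not to prejudge the ordering of $\sin\tilde\theta$ and $\sin\theta$ on the double-cheek branch; both orderings are covered by the dichotomy above.
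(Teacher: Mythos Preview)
Your proof is correct and follows essentially the same approach as the paper: bound $|e^{itX}-1|\le|t|\,|X|$, absorb one factor $W$ via the expansion estimate \eqref{eq:W}/\eqref{twoDifs} and the other via the averaging bound $\nu(p)\le h^+W/(2\kappa_{\min}(|\xi|-1)^2)$ of Lemma~\ref{lem:p}, then sum over $|\xi|\ge\xi_\eta\sim W/\eta$. The only difference is organizational: the paper splits into left/right/double-cheek branches with explicit $\xi$-ranges for each, whereas you collapse this into the dichotomy $\sin\theta\lessgtr\sin\tilde\theta$ governing $\max\{\sin\tilde\theta,\sin\theta\}$ in the derivative bound---a slightly cleaner bookkeeping that avoids invoking the extra double-cheek expansion factor the paper uses but does not actually need.
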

	
	\begin{proof}
		Formula \eqref{eq:psm2} tells us that we have an extra factor $\nu(p(J_\xi,\tilde\theta))$ in each term of \eqref{eq:Pest}. We split the sum according to the type of trajectory inside the microstructure.
		If the inward trajectory (approaching $\ell_0 \cap \mic_i$ from the left)
		first hits the left cheek and then exits, then $\sin \theta \leq \sin \tilde \theta$. If the trajectory first hits the right cheek and then exits, then $\sin \tilde \theta \leq \sin \theta$.
		As a result, for fixed $\theta$, the ranges of $\sin \tilde \theta$ are adjacent subintervals of
		$[C_{\kappa}^{-1} \sin^2 \theta, \sin \theta
		]$ for the right cheek, and
		$[\sin \theta, C_\kappa \sqrt{\sin \theta}]$
		for the right cheek, and hence the boundaries of the sums for these cheeks in the computation below
		overlap only for $W/\sin\theta$.
		
		If there is a collision with both cheeks, then we 
		can still compare $\sin \tilde \theta$ and $\sin \theta$ according to Lemma~\ref{lem:SV},
		but we have $\sin \theta_{i,1} < \sin \tilde \theta$ and $\sin \theta_{i,2} < \sin \theta$,
		and also $|\Psi'_{R_i}|$ has an extra factor $\geq \tau_{i,1} \kappa_{i,1}/\sin\theta_{i,2} \geq \frac{\kappa_{\min} }{2 \sin\theta}$.
		This leads to three cases in the estimate of the derivative of \eqref{eq:W}
		and therefore three sums in the estimate
		$P(f \cdot  |e^{iX}-1|)(\theta)$, as follows:
		\begin{eqnarray*}
			\left| P(f \cdot  |e^{iX}-1|)(\theta)\right| &\ll&
			\frac{|t| W}{2\kappa_{\min} W}  \sum_{\xi = W \max\{ \frac{1}{\eta}, \frac{1}{\sqrt{C_\kappa \sin \theta}} \} }^{\frac{W}{\sin \theta}}
			\frac{ |\tan \tilde \theta|^2 \, | f(\tilde \theta)|\, |\cos \tilde \theta| }
			{ \frac{W \kappa_{\min}}{\sin^2 \tilde \theta}\, \sin \theta} \,
			\1_{\xi(\tilde\theta) = \xi}  \qquad \text{(left cheek)}
			\\
			&& + \ \frac{|t| W}{2\kappa_{\min} W}  \sum_{\xi = W \max\{ \frac{1}{\eta}, \frac{1}{\sin \theta} \} }^{\frac{W}{C_\kappa\sin^2 \theta}}
			\frac{ |\tan \tilde \theta|^2 \ f(\tilde \theta)\, |\cos\tilde \theta| }
			{\frac{W \kappa_{\min}}{\sin \theta \sin \tilde \theta} \,\sin \theta} \,
			\1_{\xi(\tilde\theta) = \xi}   \quad \qquad \text{(right cheek)}\\
			&& + \ \frac{|t| W}{2\kappa_{\min} W}  \sum_{\xi = W\max\{ \frac{1}{\eta}, \frac{1}{\sqrt{C_\kappa \sin \theta}} \} }^{\frac{W}{C_\kappa\sin \theta}}
			\frac{ |\tan \tilde \theta|^2 \, |f(\tilde \theta)| \, |\sin \tilde \theta| }
			{ \frac{2W \kappa^2_{\min}}{\sin^2 \tilde \theta \sin\theta}\, \sin \theta \,  |\tan \tilde \theta|} \,
			\1_{\xi(\tilde\theta) = \xi}   \quad \text{(double cheek)}   \\
			& = &
			\frac{|t| W^3}{\kappa_{\min}^2 \sin \theta}  \sum_{\xi = W \max\{ \frac{1}{\eta}, \frac{1}{\sqrt{C_\kappa \sin \theta}} \} }^{\frac{W}{\sin \theta}}
			\frac{ |\sin \tilde \theta|^4 \, | f(\tilde \theta) |}
			{W^4} \, \1_{\xi(\tilde\theta) = \xi}  \\
			&& + \ \frac{|t| W^2}{\kappa_{\min}^2}  \sum_{\xi = W \max\{ \frac{1}{\eta}, \frac{1}{\sin \theta} \} }^{\frac{W}{C_\kappa\sin^2 \theta}}
			\frac{ |\tan \tilde \theta|^3 \, |f(\tilde \theta)| }
			{W ^3} \1_{\xi(\tilde\theta) = \xi}  \\
			&& + \ \frac{|t| W^3}{2\kappa_{\min}^3}  \sum^{\frac{W}{C_\kappa \sin^2 \theta}}_{\xi = W \max\{ \frac{1}{\eta}, \frac{1}{\sqrt{C_\kappa \sin \theta}} \} }
			\frac{ |\tan \tilde \theta|^4 \, |f(\tilde \theta)| }
			{W^4} \, \1_{\xi(\tilde\theta) = \xi}  \\
			&\ll& \frac{|t| W^3 \|f\|_\infty}{\kappa_{\min}^2 \sin \theta}  \sum_{\xi = W \max\{ \frac{1}{\eta}, \frac{1}{\sqrt{C_\kappa \sin \theta}} \} }^{\frac{W}{\sin \theta}} \frac{1}{|\xi|^4}
			\ + \ \frac{|t| W^2 \|f\|_\infty}{\kappa_{\min}^2}  \sum_{\max\{ \xi_\eta , \frac{W}{\sin \theta} \} }^{W(C_\kappa^{-1}\sin \theta)^{-2}}
			\frac{1} {\xi^3} \\
			&&  + \ \frac{|t| W^3 \|f\|_\infty}{2\kappa_{\min}^3}  \sum_{\xi = W\max\{ \frac{1}{\eta}, \frac{1}{\sqrt{C_\kappa \sin \theta}} \} }^{\frac{W}{C_\kappa \sin^2 \theta}} \frac{1} {\xi^4} \\
			&\leq& \frac{|t|  \|f\|_\infty}{\kappa_{\min}^2}
			\left(\frac{C_\kappa \eta}{3}  + \frac{\eta^2}{2} + \frac{\eta^3}{3\kappa_{\min}} \right),
		\end{eqnarray*}
		so taking $C_\infty = \frac{\eta}{2\kappa_{\min}^2}\left(\frac{C_\kappa}{3}  + \frac{\eta}{2} + \frac{\eta^2}{3\kappa_{\min}} \right)$ gives the lemma.
	\end{proof}

	\subsubsection{Estimating the variation}\label{subsec:varsmall}

	\begin{lemma}\label{lem:ctvar}
		Assume $\sin \theta < \eta$.
		Let $f\in BV$. There exists $C_{Var}>0$ (independent of $\theta$) so that for all $t\in\R$,
		$$
		\Var_\theta\left((P_t-P_0)f|_{\{\sin\tilde\theta < \eta\}}\right)
		\le C_{Var}\, |t|\, \|f\|_{BV}.
		$$
	\end{lemma}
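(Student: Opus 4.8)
\medskip

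\noindent\textbf{Proof sketch and plan.} The plan is to mimic the scheme of Lemma~\ref{lem:continf}, carrying one further ``order'' of variation throughout. Writing $g:=f\,(e^{itX}-1)$ and noting that $(P_t-P_0)f=\int_0^1 P_{R_i}(f(e^{itX}-1))\,d\nu(R_i)=P(g)$, I would start from the decomposition \eqref{eq:avadv}, so that on $\{\sin\tilde\theta<\eta\}$
\[
(P_t-P_0)f\big|_{\{\sin\tilde\theta<\eta\}}=\sum_{|\xi|\ge\xi_\eta}\ \sum_{\ell\in\{L,R,D\}}G_{\xi,\ell},\qquad
G_{\xi,\ell}(\theta):=\int_{p(J_{\xi,\ell},\tilde\theta)}\frac{g(\tilde\theta)\,\sin\tilde\theta}{|\Psi'_{R_i}(\tilde\theta)|\,\sin\theta}\,\1_{J_{\xi,\ell}}(\tilde\theta)\,d\nu(R_i),
\]
and then use subadditivity of the variation to reduce the claim to bounding $\sum_{|\xi|\ge\xi_\eta}\sum_\ell\Var_\theta(G_{\xi,\ell})$ by $C_{Var}\,|t|\,\|f\|_{BV}$ with $C_{Var}$ independent of $W$, carried out branch type by branch type.

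For a fixed branch type $(\xi,\ell)$ and fixed $R_i$, I would estimate the variation of the integrand on the monotone branch $J_{\xi,\ell}$, using that $\theta\leftrightarrow\tilde\theta=\Psi_{R_i}^{-1}(\theta)$ preserves variation and hence computing everything as a function of $\tilde\theta$. The integrand factorizes as $f(\tilde\theta)\cdot(e^{itX(\tilde\theta)}-1)\cdot w_{R_i}(\tilde\theta)\cdot(\sin\theta)^{-1}$ with $w_{R_i}:=\sin\tilde\theta/|\Psi'_{R_i}|$, and the $BV$ product rule $\Var(uv)\le\sup|u|\,\Var(v)+\sup|v|\,\Var(u)$, applied repeatedly, bounds $\Var_{\tilde\theta}$ of the integrand on $J_{\xi,\ell}$ by a sum of four terms, in each of which one factor is replaced by its variation and the other three by their suprema over $J_{\xi,\ell}$. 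The inputs I would use are: $\sup|f|\le\|f\|_\infty$ together with $\sum_{\xi,\ell}\Var_{\tilde\theta}(f|_{J_{\xi,\ell}})\le\Var(f)$ (the $J_{\xi,\ell}$ being pairwise disjoint subintervals); $\sup|e^{itX}-1|\le|t|\sup_{J_\xi}|X|\lesssim|t|\,|\xi|$ and $\Var_{\tilde\theta}(e^{itX}-1|_{J_{\xi,\ell}})\le|t|\,\Var_{\tilde\theta}(X|_{J_{\xi,\ell}})\le|t|$, the last because $X=W/\tan\tilde\theta$ is monotone on $J_{\xi,\ell}$ with range of length at most $1$ there; $\Var_{\tilde\theta}(w_{R_i}|_{J_{\xi,\ell}})\le C|\xi|^{-5/2}$ by Corollary~\ref{cor:Q} and the coincidence of $\theta$- and $\tilde\theta$-variation on a monotone branch, with $\sup w_{R_i}$ as estimated in Section~\ref{sec:EstNoAv}; and, via Lemma~\ref{lem:SV} (which pins $\sin\theta$ to $\sin\tilde\theta$, with $\sin\tilde\theta\asymp W/|\xi|$), the bounds for $\sup(\sin\theta)^{-1}$ and for $\Var_{\tilde\theta}((\sin\theta)^{-1}|_{J_{\xi,\ell}})$, the latter equal to its oscillation by monotonicity. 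For the double-cheek branch I would additionally use that $|\Psi'_{R_i}|$ then carries an extra factor $\gtrsim\kappa_{\min}/\sin\theta$, which only improves the bounds.

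Finally I would treat the averaging. For fixed $\theta$ the $R_i$-integration in $G_{\xi,\ell}$ runs over $p(J_{\xi,\ell},\tilde\theta)\subset[0,1]$; comparing $G_{\xi,\ell}$ at two nearby exit angles and splitting the difference into the variation of the integrand over the common part of the two $R_i$-domains plus the contribution of their symmetric difference, one obtains
\[
\Var_\theta(G_{\xi,\ell})\ \lesssim\ \sup_\theta\nu\big(p(J_{\xi,\ell},\tilde\theta)\big)\cdot\sup_{R_i}\Var_{\tilde\theta}\!\big(\text{integrand}\big|_{J_{\xi,\ell}}\big)\ +\ \Var_\theta\!\big(\nu(p(J_{\xi,\ell},\tilde\theta))\big)\cdot\sup_{R_i,\theta}\big|\text{integrand}\big|,
\]
up to boundary jump terms of the same order at the endpoints of the $\theta$-range of the branch. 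By Lemma~\ref{lem:p}, $\sup_\theta\nu(p(J_{\xi,\ell},\tilde\theta))\le\frac{h^+W}{2\kappa_{\min}(|\xi|-1)^2}$, and the same computation (the endpoints of $p$ being controlled through the cheek-collision angles $\beta_\pm$) bounds $\Var_\theta(\nu(p(J_{\xi,\ell},\tilde\theta)))$ by a quantity of the same order in $|\xi|$ and $W$. Feeding in the Step-2 estimates, the first summand, once summed over $\xi$ and $\ell$, reproduces the three $\xi$-series of Lemma~\ref{lem:continf} with one power of $|\xi|$ fewer, compensated by the variation gains ($|\xi|^{-5/2}$ from Corollary~\ref{cor:Q}, the bound $1$ from $e^{itX}-1$, or the oscillation bound for $(\sin\theta)^{-1}$); each series over $|\xi|\ge\xi_\eta\sim W/\eta$ still converges with sum bounded independently of $W$, by the same power-counting as in Lemma~\ref{lem:continf}. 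The terms containing $\Var_{\tilde\theta}(f|_{J_{\xi,\ell}})$, and the boundary jump terms containing $\sup|f|$, carry a per-$\xi$ factor that is uniformly bounded (being a term of a convergent series), so they sum to $\lesssim|t|\,(\Var(f)+\|f\|_\infty)$; collecting everything yields $C_{Var}\,|t|\,\|f\|_{BV}$.

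I expect the main obstacle to be precisely this last point: the averaging gain, which for the $\|\cdot\|_\infty$ estimate of Lemma~\ref{lem:continf} is purely pointwise in $\theta$ (namely $|G_{\xi,\ell}(\theta)|\le\nu(p(J_{\xi,\ell},\tilde\theta))\sup_{R_i}|\text{integrand}|$), must be upgraded to a genuine variation estimate while the interval $p(J_{\xi,\ell},\tilde\theta)\subset[0,1]$ over which one integrates in $R_i$ itself moves with $\theta$; this forces control of $\Var_\theta(\nu(p(J_{\xi,\ell},\tilde\theta)))$ in addition to $\sup_\theta\nu(p)$, together with a careful accounting of the powers of $W$ and $|\xi|$ so that every resulting series over $|\xi|\ge\xi_\eta$ stays bounded uniformly in $W$.
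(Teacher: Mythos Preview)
Your overall plan---product-rule bookkeeping of the four factors $f$, $e^{itX}-1$, $\sin\tilde\theta/|\Psi'_{R_i}|$, $1/\sin\theta$ together with Corollary~\ref{cor:Q}, Lemma~\ref{lem:SV} and Lemma~\ref{lem:p}---matches the paper's, and your diagnosis that the real difficulty is promoting the averaging gain from a pointwise bound to a variation bound is exactly right. However, the displayed inequality you propose for $\Var_\theta(G_{\xi,\ell})$ is not correct, and this is where the argument breaks.

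When you compare $G_{\xi,\ell}$ at successive partition points $\theta_0<\theta_1<\dots<\theta_n$ and split each increment into ``common $R_i$-domain'' plus ``symmetric difference'', the common-domain contributions sum to
\[
\sum_j \int_{p(\theta_{j-1})\cap p(\theta_j)}\bigl|h(R_i,\theta_j)-h(R_i,\theta_{j-1})\bigr|\,d\nu(R_i)
\ \le\ \int_{\bigcup_\theta p(\theta)}\Var_\theta\bigl(h(R_i,\cdot)\bigr)\,d\nu(R_i),
\]
so the prefactor is $\nu\bigl(\bigcup_\theta p(J_{\xi,\ell},\tilde\theta)\bigr)$, \emph{not} $\sup_\theta\nu(p(J_{\xi,\ell},\tilde\theta))$. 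As $\theta$ ranges over the whole image $\Theta_{\xi,\ell}$ of the branch, the intervals $p(J_{\xi,\ell},\tilde\theta)$ sweep out essentially all admissible entrance positions, and their union has $\nu$-measure of order $1$, not $W/\xi^2$. With that factor in place the $\xi$-series diverges; the paper points this out explicitly (``This is too long for our purpose''). The same issue afflicts your symmetric-difference term: $\Var_\theta\bigl(\nu(p(\theta))\bigr)$ is the total travel of the endpoints of $p$, again of order $\nu(\bigcup_\theta p)$ rather than $\sup_\theta\nu(p)$.

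The paper's fix is a localization in $\theta$ rather than an estimate of $\Var_\theta(\nu(p))$: since $p(J_{\xi,\ell},\tilde\theta)$ moves continuously in $\theta$, one can partition $\Theta_{\xi,\ell}$ into finitely many subintervals $\Theta_{\xi,\ell,k}$, $k\in K_{\xi,\ell}$, chosen so small that the \emph{union} $p(\xi,\ell,k):=\bigcup_{\theta\in\Theta_{\xi,\ell,k}}p(J_{\xi,\ell},\tilde\theta)$ still has $\nu$-measure $\le 2h^+W/(\kappa_{\min}\xi^2)$. On each $\Theta_{\xi,\ell,k}$ the integration domain can be taken as the fixed set $p(\xi,\ell,k)$, so one may legitimately pull out the factor $C_pW/\xi^2$ and bound by $\sup_{R_i}\Var_\theta(\text{integrand}|_{\Theta_{\xi,\ell,k}})$. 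Summing over $k$ then uses that for the monotone factors ($1/\sin\theta$, $e^{itX}-1$, $\sin\tilde\theta/|\Psi'_{R_i}|$) the variations over adjacent subintervals merge back into a single variation over $J_{\xi,\ell}$, so no extra growth in $\#K_{\xi,\ell}$ appears. With this device your product-rule estimates go through essentially as written and yield $C_{Var}\,|t|\,\|f\|_{BV}$.
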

	
	\begin{proof}
		We first obtain a general bound for the variation
		starting from \eqref{eq:avadv}.
		\begin{align*}
			\Var_{\theta}&(Pf(e^{itX}-1)|_{\{ \sin \tilde \theta < \eta\} } ) \\
			&=  \Var_{\theta} \left( \sum_{|\xi| \geq \xi_\eta} \sum_{\ell \in \{ L,R,D\}} \int_{p(J_{\xi,\ell}, \tilde\theta)}
			\frac{f(\tilde \theta)}{|\Psi'_{R_i}(\tilde\theta)|} \frac{\sin \tilde \theta}{\sin \theta} (e^{itX(\tilde\theta)} - 1) \1_{J_{\xi,\ell}}(\tilde\theta) \, d\nu(R_i)
			\right) \\
			&\leq \sum_{|\xi| \geq \xi_\eta} \sum_{\ell \in \{ L,R,D\}}
			\Var_{\theta} \left(  \int_{p(J_{\xi,\ell}, \tilde\theta)} 
			\frac{f(\tilde \theta)}{|\Psi'_{R_i}(\tilde\theta)|} \frac{\sin \tilde \theta}{\sin \theta} (e^{itX(\tilde\theta)} - 1) \1_{J_{\xi,\ell}}(\tilde\theta) \, d\nu(R_i)
			\right).
		\end{align*}
		The integral is over $R_i$, not $\theta$, and for each $(\xi,\ell)$, the set $\Theta_{\xi,\ell} = \bigcup_{R_i} \Psi_{R_i}(J_{\xi,\ell})$ (where the union only runs over those $R_i \in [0,1]$ for which the branch $J_{\xi,\ell}$ actually exists)
		is an interval of length $O(\sqrt{1/|\xi|})$, according to Lemma~\ref{lem:SV}.
		This is too long for our purpose.
		
		However, for each pair $(\xi,\ell)$, and  $\tilde \theta$ with $\xi(\tilde\theta) = \xi$, the measure
		$\nu(p(J_{\xi,\ell},\tilde\theta)) \leq \frac{h^+ W}{2\kappa_{\min} (|\xi|-1)^2}$
		by Lemma~\ref{lem:p}. Also, the interval $p(J_{\xi,\ell},\tilde\theta)$ moves continuously in $\tilde\theta$.
		Therefore, if $\Theta_{\xi,\ell,k}$ is a sufficiently small
		neighbourhood of $\theta = \Psi_{R_i}(\tilde\theta)$,
		and we set
		$$
		p(\xi,\ell,k) = \Big\{ R_i \in [0,1] : R_i \in p(J_{\xi\,\ell},\tilde\theta) \text{ for some } \tilde\theta \text{ with }
		\Psi_{R_i}(\tilde\theta) \in \Theta_{\xi,\ell,k} \Big\},
		$$
		then we can assure that
		$\nu(p(\xi,\ell,k))$ is about four times as big, say
		$\frac{h^+ W}{\kappa_{\min} \xi^2} \leq \nu(p(\xi,\ell,k)) \leq \frac{2h^+ W}{\kappa_{\min} \xi^2}$.
		The derivative of the corresponding branch $\Psi_{R_i}$ is bounded away from zero, and therefore
		we can partition $\Theta_{\xi,\ell}$ into finitely many subintervals $\Theta_{\xi,\ell,k}$ (i.e., for $k$ in a finite index set $K_{\xi,\ell}$) such that
		$\nu(p(\xi,\ell,k)) \leq \frac{h^+ W}{\kappa_{\min} |\xi|^2}$
		for each $k \in K_{\xi,\ell}$. Then
		\begin{align}\label{eq:varsum}
			\Var_{\theta}  & \left(  \int_{p(J_{\xi,\ell}, \tilde\theta)}
			\frac{f(\tilde \theta)}{|\Psi'_{R_i}(\tilde\theta)|} \frac{\sin \tilde \theta}{\sin \theta} (e^{itX(\tilde\theta)} - 1) \1_{J_{\xi,\ell}}(\tilde\theta) \, d\nu(R_i)
			\right) \nonumber \\
			& = \
			\sum_{k \in K_{\xi,\ell}}  \Var_{\theta} \left(  \int_{p(\xi,\ell,k)}
			\frac{f(\tilde \theta)}{|\Psi'_{R_i}(\tilde\theta)|} \frac{\sin \tilde \theta}{\sin \theta} (e^{itX(\tilde\theta)} - 1) \1_{\Theta_{\xi,\ell,k} } \circ \Psi_{R_i}(\tilde\theta)  \, d\nu(R_i)
			\right) \nonumber \\
			&\leq \
			\frac{2h^+ W}{\kappa_{\min} \xi^2} \sum_{k \in K_{\xi,\ell}}
			\sup_{R_i \in p(\xi,\ell,k)} \Var_{\theta} \left(
			\frac{f(\tilde \theta)}{|\Psi'_{R_i}(\tilde\theta)|} \frac{\sin \tilde \theta}{\sin \theta} (e^{itX(\tilde\theta)} - 1) \, \1_{\Theta_{\xi,\ell,k} } \circ \Psi_{R_i}(\tilde\theta)
			\right) \nonumber \\
			& \leq \frac{C_p W}{\xi^2} \sum_{k \in K_{\xi,\ell}} \
			\sup_{R_i \in [0,1]} \Var_{\theta} \left(
			\frac{f(\tilde \theta)}{|\Psi'_{R_i}(\tilde\theta)|} \frac{\sin \tilde \theta}{\sin \theta} (e^{itX(\tilde\theta)} - 1) \1_{\Theta_{\xi,\ell,k}} \circ \Psi_{R_i}(\tilde\theta) \right),
		\end{align}
		for some $C_p$ independent of $\xi$ and $W$. Let $J_{\xi,\ell,k} = \{ \tilde\theta \in J_{\xi,\ell} : \Psi_{R_i}( \tilde\theta) \in \Theta_{\xi,\ell,k} \}$.
		These intervals depend on $R_i$, but in order not to overload the notation even more, we will suppress this dependence.
		
		The map $\Psi_{R_i}|_{J_{\xi,\ell}}$ is monotone for each $|\xi| \geq \xi_\eta, \ell \in \{L,R,D\}$. So, if we split the above into parts depending on $\tilde \theta \in J_{\xi,\ell}$ and $\theta = \Psi_{R_i}(\tilde\theta)$, we can look at the variation in $\tilde\theta$:
		\begin{align}\label{eq:var1}
			\Var_{\theta} & \left(
			\frac{f(\tilde \theta)}{|\Psi'_{R_i}(\tilde\theta)|} \frac{\sin \tilde \theta}{\sin \theta} (e^{-tX(\tilde\theta)} - 1) \1_{J_{\xi,\ell,k}}(\tilde\theta)  \right) \nonumber \\
			& =\
			\Var_{\theta} \left( \frac{1}{\sin\theta} |_{\Theta_{\xi,\ell,k}} \right) \sup_{\tilde\theta \in J_{\xi,\ell,k}}
			\left| \frac{f(\tilde \theta)}{|\Psi'_{R_i}(\tilde\theta)|} \sin \tilde \theta (e^{itX(\tilde\theta)} - 1) \right| \nonumber \\
			& \qquad + \ \sup_{\theta \in \Theta_{\xi,\ell,k}}\frac{1}{\sin\theta}
			\Var_{\tilde \theta \in J_{\xi,\ell,k}}
			\left( \frac{f(\tilde \theta)}{|\Psi'_{R_i}(\tilde\theta)|} \sin \tilde \theta (e^{itX(\tilde\theta)} - 1)  \right)\nonumber\\
			& = \ I_{\xi,\ell,k}(f)+II_{\xi,\ell,k}(f),
		\end{align}
		and we estimate the sums of $I_{\xi,\ell,k}$ and $II_{\xi,\ell,k}$ over $k \in K_{\xi,\ell}$.
		
		We start with $I_{\xi,\ell,k}$ in \eqref{eq:var1}.
		As $\theta \mapsto \frac{1}{\sin \theta}$ is monotone on $\Psi_{R_i}(J_{\xi,\ell})$, and keeping in mind that the sum of variations of a continuous function over adjacent intervals is the variation over the union of those intervals,
		we have
		$$
		\sum_{k \in K_{\xi,\ell}} \sup_{R_i \in p(\xi,\ell,k)} \Var_{\theta}\left(\frac{1}{\sin\theta}|_{\Psi_{R_i}(J_{\xi,\ell})} \right) \leq \sup_{\theta \in \Psi_{R_i}(J_{\xi,\ell})} \frac{1}{\sin\theta}.
		$$
		Hence, the first term in \eqref{eq:var1} contains only suprema, and can be bounded by $|t| \kappa_{\min}^{-2} \| f \|_\infty$ using the computation similar to the one used in Lemma~\ref{lem:continf}. More precisely, recall from~\eqref{twoDifs}
		that $\frac{1}{|\Psi'_{R_i}(\tilde\theta)|} \le \frac{\sin \thetain_i  \max\{\sin \thetaout_i,  \sin \thetain_i\}}{W \kappa_{i,1}}$.
		Considering the worst case (i.e., left cheek collision)
		with
		$\max\{\sin \thetaout_i,  \sin \thetain_i\} = \sin \thetain_i = \sin \tilde\theta$,
		we obtain that
		$\frac{1}{|\Psi'_{R_i}(\tilde\theta)|} \le C \frac{\sin^2\tilde\theta}{W \kappa_{\min}}$.
		Recalling that $\sin\tilde\theta\sim W/|\xi|$ and $\frac{1}{\sin\theta}\le C_\kappa^2\frac{\xi^2}{W^2}$
		from Lemma~\ref{lem:SV}, we have:
		\begin{align*}
			\sum_{k \in K_{\xi,\ell}} \sup_{R_i \in p(\xi,\ell,k)} I_{\xi,\ell,k}(f)\le C_\kappa^2 \, |t|\,\|f\|_\infty\frac{\xi^2}{W^2}\frac{W^2}{\xi^2} = C_\kappa^2 \, |t|\,\|f\|_\infty.
		\end{align*}
		Thus, the sum in~\eqref{eq:varsum} coming from $I$
		satisfies (recall $\xi_\eta \sim W/\eta$),
		\begin{equation}\label{eqI}
			\sum_{|\xi| > \xi_\eta} \sum_{\ell \in \{ L,R,D\}} \sum_{k \in K_{\xi,\ell}}
			I(\xi,f) \le C_\kappa^2\, |t| \, \|f\|_\infty\sum_{|\xi| \geq \xi_\eta} \sum_{\ell \in \{ L,R,D\}} \frac{C_p W}{\xi^2} = O\left( |t|\,\|f\|_\infty \right).
		\end{equation}
		
		Regarding $II_{\xi,\ell,k}$ in \eqref{eq:var1}, we first separate $f$:
		\begin{eqnarray}\label{eq:II}
			II_{\xi,\ell,k}(f) &\leq&
			\sup_{\theta \in \Theta_{\xi,\ell,k}} \frac{1}{\sin\theta} \Var(f|_{J_{\xi,\ell,k}})\sup_{\tilde\theta \in J_{\xi,\ell}}
			\left| \frac{\sin \tilde\theta \, (e^{itX(\tilde\theta)}-1)}{|\Psi'_{R_i}(\tilde\theta)| } \right| \nonumber \\
			&& + \
			\sup_{\theta \in \Theta_{\xi,\ell,k}}  \frac{\| f \|_\infty}{\sin\theta} \Var_{\tilde\theta \in J_{\xi,\ell,k}} \left( \frac{\sin \tilde\theta \, (e^{itX(\tilde\theta)}-1)}{|\Psi'_{R_i}| }  \right).
		\end{eqnarray}
		
		For the remaining terms, we first note that  $x \mapsto e^{itx}-1$ is smooth and sufficiently ``monotone'' so that $\sum_{k \in K_{\xi,\ell}} \Var_{\tilde\theta \in J_{\xi,\ell,k} }(e^{itX(\tilde\theta)}-1)
		\leq |t| \sum_{k \in K_{\xi,\ell}} \Var_{\tilde\theta \in J_{\xi,\ell,k} } X(\tilde\theta) \leq |t| \sup_{\tilde\theta \in J_{\xi,\ell}} \frac{W}{|\tan \tilde\theta|}$.
		Therefore,
		\begin{eqnarray}\label{vraspl}
			\sum_{k \in K_{\xi,\ell}} \sup_{R_i \in p(\xi,\ell,k)} \Var_{\tilde\theta \in J_{\xi,\ell,k} }\left( \frac{\sin \tilde\theta \, (e^{it}-1)}{|\Psi'_{R_i}| } \right) &\le& \\
			\sum_{k \in K_{\xi,\ell}} 	|t| \sup_{\tilde\theta \in J_{\xi,\ell}} \frac{W}{|\tan \tilde\theta|}\sup_{R_i \in p(\xi,\ell,k)}\left( \frac{\sin \tilde\theta}{|\Psi'_{R_i}(\tilde\theta)| } \right)
			+ |t| \sup_{\tilde\theta \in J_{\xi,\ell}} \!\!\! \ &
			\frac{W}{|\tan \tilde\theta|} &
			\sup_{R_i \in p(\xi,\ell,k)}  \Var_{\tilde\theta \in J_{\xi,\ell,k} }\left( \frac{\sin \tilde\theta}{|\Psi'_{R_i}| } \right).  \nonumber
		\end{eqnarray}
		We can merge the intervals $J_{\xi,\ell,k}$ over $k \in K_{\xi,\ell}$ again and apply  Corollary~\ref{cor:Q} for the remaining sum over the variations in
		this expression.
		Using again the estimates listed before~\eqref{eqI},
		continuing from~\eqref{vraspl}, we have
		that the following holds for some $C_0,C_1>0$:
		\begin{align*}
			\sum_{k \in K_{\xi,\ell}} \sup_{R_i \in p(\xi,\ell,k)} \Var_{\tilde\theta \in J_{\xi,\ell,k}} \left( \frac{\sin \tilde\theta \, (e^{itX}-1)}{|\Psi'_{R_i}| }  \right)& \le
			C_0 |t| \frac{W^2}{\xi^2} + C_1 |t| \frac{1}{|\xi|^{3/2}}.
		\end{align*}
		This together with~\eqref{eq:II} (recalling $\frac{1}{\sin\theta}\le C_\kappa\frac{\xi^2}{W^2}$ and $\xi_\eta \sim W/\eta$) gives:
		\begin{align*}
			\sum_{|\xi| > \xi_\eta} & \sum_{\ell \in \{ L,R,D\}}
			\sum_{k \in K_{\xi,\ell}} \sup_{R_i \in p(\xi,\ell,k)}
			J_{\xi,\ell,k}(f) \\
			\le\ & C\,|t|\,\|f\|_{BV}\sum_{|\xi| > \xi_\eta} \sum_{\ell \in \{L,R,D\}}
			\frac{C_p W}{\xi^2} \frac{C_\kappa \xi^2}{W^2}
			\left(  C_0 |t| \frac{W^2}{\xi^2} + C |t| \frac{1}{|\xi|^{3/2}} \right)
			= \ O\left(|t|\|f\|_{BV}\right).
		\end{align*}
		This together with~\eqref{eqI} implies that
		$\Var_\theta\left((P_t-P_0)f\right)=O\left(\frac{|t|\|f\|_{BV}}{W}\right)$, as required.
	\end{proof}
	
	An immediate consequence of Lemmas~\ref{lem:continf}
	and~\ref{lem:ctvar} is
	
	\begin{corollary}\label{cor:ctvar}Assume $\sin\theta < \eta$.
		Let $f\in BV$. There exists $C>0$ (independent of $\theta$) so that
		$\left\|(P_t-P_0)f|_{\{\sin\tilde\theta < \eta\}}\right\|_{BV} \le C\,|t|\, \|f\|_{BV}$ for all $t\in\R$.
	\end{corollary}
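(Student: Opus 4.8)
The plan is simply to add the two preceding estimates. Recall that by definition $\|g\|_{BV}=\Var(g)+\|g\|_\infty$, so it suffices to control each summand separately for the function $g=(P_t-P_0)f|_{\{\sin\tilde\theta<\eta\}}$; there is nothing to glue together beyond this, since the restriction to the region $\{\sin\tilde\theta<\eta\}$ (equivalently, the large-displacement branches where $|\xi|\ge\xi_\eta\sim W/\eta$) is exactly the hypothesis under which both Lemma~\ref{lem:continf} and Lemma~\ref{lem:ctvar} were proved.

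First I would invoke Lemma~\ref{lem:continf}, which gives a constant $C_\infty>0$, independent of $\theta$ and $t$, with $\|(P_t-P_0)f|_{\{\sin\tilde\theta<\eta\}}\|_\infty\le C_\infty|t|\,\|f\|_\infty\le C_\infty|t|\,\|f\|_{BV}$, the last step using $\|f\|_\infty\le\|f\|_{BV}$. Then I would invoke Lemma~\ref{lem:ctvar}, which supplies $C_{Var}>0$, again independent of $\theta$ and $t$, with $\Var_\theta\big((P_t-P_0)f|_{\{\sin\tilde\theta<\eta\}}\big)\le C_{Var}|t|\,\|f\|_{BV}$. Summing the two bounds yields $\|(P_t-P_0)f|_{\{\sin\tilde\theta<\eta\}}\|_{BV}\le(C_\infty+C_{Var})|t|\,\|f\|_{BV}$, so the corollary holds with $C=C_\infty+C_{Var}$.

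Since all of the genuine work — the averaging argument over $d\nu(R_i)$ that replaces each integral in~\eqref{eq:avadv} by a small factor $\nu(p(J_{\xi,\ell},\tilde\theta))=O(W/|\xi|^2)$ from Lemma~\ref{lem:p}, together with the summable tail bounds of Lemma~\ref{lem:VarPsi} and Corollary~\ref{cor:Q} — has already been absorbed into Lemmas~\ref{lem:continf} and~\ref{lem:ctvar}, there is no real obstacle remaining at this stage. The only thing to be careful about is the uniformity of the constants: both lemmas produce bounds with constants independent of $\theta$ and $t$, which is precisely what is needed for the $\theta$-uniform estimate asserted in the corollary.
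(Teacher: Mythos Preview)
Your proof is correct and matches the paper's approach exactly: the corollary is stated there as ``an immediate consequence of Lemmas~\ref{lem:continf} and~\ref{lem:ctvar}'', and you have spelled out precisely that summation, obtaining $C=C_\infty+C_{Var}$.
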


	\subsection{Continuity estimates when $\sin\tilde\theta\ge \eta$}
	\label{subsec:contaw}
	As in the proof of Lemma~\ref{lem:contaw} below, the averaging plays no role when $\sin\tilde\theta\ge \eta$.
	Throughout this paragraph we shall exploit the formula for transfer operator $P_{R_i}$ defined in~\eqref{eq:PR}.

	\begin{lemma}\label{lem:contaw} 
		There exists $C>0$ so that
		$\left\|(P_t-P_0)f \Big|_{\{\sin \tilde\theta \geq \eta\}} \right\|_{BV}\le C|t|\|f\|_{BV}$.
	\end{lemma}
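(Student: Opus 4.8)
The plan is to follow the scheme of Lemmas~\ref{lem:continf} and~\ref{lem:ctvar}, but the argument is considerably shorter here: as noted at the start of this subsection, averaging plays no role when $\sin\tilde\theta\ge\eta$, because by Lemma~\ref{lem:Lambda} the family $\Lambda_\eta$ of branches of $\Psi_{R_i}$ living over the interval $\{\sin\tilde\theta\ge\eta\}=[\arcsin\eta,\pi-\arcsin\eta]$ is finite, with $\#\Lambda_\eta\le C_\eta W$, and each of these branches stays uniformly away from the grazing singularities. Since $(P_t-P_0)f=\int_0^1 P_{R_i}\!\left(f(e^{itX}-1)\right)d\nu(R_i)$ and $\|P_0g\|_\infty\le\int_0^1\|P_{R_i}g\|_\infty\,d\nu(R_i)$, $\Var_\theta(P_0g)\le\int_0^1\Var_\theta(P_{R_i}g)\,d\nu(R_i)$ (recorded in Section~\ref{transOpSec}), it suffices to prove $\left\|P_{R_i}\!\left(f(e^{itX}-1)\right)\big|_{\{\sin\tilde\theta\ge\eta\}}\right\|_{BV}\le C|t|\,\|f\|_{BV}$ uniformly in $R_i\in[0,1]$, where the restriction $|_{\{\sin\tilde\theta\ge\eta\}}$ means that in the pointwise formula~\eqref{eq:PR} one sums only over the branches in $\Lambda_\eta$.

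Three elementary facts carry the geometry. (i) On $\{\sin\tilde\theta\ge\eta\}$ the displacement $X=W/\tan\tilde\theta$ satisfies $|X|\le W/\eta$ and is monotone, so $\Var(X|_{\{\sin\tilde\theta\ge\eta\}})\le 2W/\eta$; hence $|e^{itX}-1|\le|t|W/\eta$ and $\Var(e^{itX}-1|_{\{\sin\tilde\theta\ge\eta\}})\le|t|\,\Var(X|_{\{\sin\tilde\theta\ge\eta\}})\le 2|t|W/\eta$, giving $\left\|f(e^{itX}-1)\big|_{\{\sin\tilde\theta\ge\eta\}}\right\|_{BV}\le \tfrac{3W}{\eta}\,|t|\,\|f\|_{BV}$. (ii) The image of $\{\sin\tilde\theta\ge\eta\}$ under $\Psi_{R_i}$ avoids a neighbourhood of $\{0,\pi\}$: applying Lemma~\ref{lem:SV} to the time-reversed trajectory (the billiard in $\mic_i$ is reversible and the threshold $\eta$ is symmetric in entrance and exit), $\sin\theta<\eta_0:=(\eta/C_\kappa)^2$ forces $\sin\tilde\theta\le C_\kappa\sqrt{\sin\theta}<\eta$, so every branch in $\Lambda_\eta$ has $\sin\theta\ge\eta_0>0$; consequently $1/\sin\theta\le\eta_0^{-1}$ and $\Var_\theta(1/\sin\theta)\le 2\eta_0^{-1}$ on each branch image. (iii) By~\eqref{twoDifs} and $\sin\tilde\theta,\sin\theta\le1$, on $\{\sin\tilde\theta\ge\eta\}$ one has $|\Psi'_{R_i}|\ge 1+W\kappa_{\min}$, so $1/|\Psi'_{R_i}|\le(W\kappa_{\min})^{-1}$, and re-running the computation of Lemma~\ref{lem:VarPsi} in this much simpler regime — where only boundedness is needed, not the $|\xi|^{-3/2}$ decay, and Lemma~\ref{lem:tau} bounds the number of monotone pieces of $\tau_{i,j}$ — yields $\Var_{\tilde\theta}\!\left(1/|\Psi'_{R_i}|\big|_{J_{\xi,\ell}}\right)=O(1/W)$ on each branch.

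Now estimate $P_{R_i}\!\left(f(e^{itX}-1)\right)\big|_{\{\sin\tilde\theta\ge\eta\}}$ branch by branch. For the sup norm, pull out $\|f\|_\infty|t|W/\eta$ and use $P_{R_i}\1=\1$ (since $\Psi_{R_i}$ preserves $\mu$), so the remaining partial sum $\sum_{\Lambda_\eta}\frac{\sin\tilde\theta}{|\Psi'_{R_i}(\tilde\theta)|\sin\theta}\1_{J_{\xi,\ell}}(\tilde\theta)\le P_{R_i}\1(\theta)=1$, whence $\left\|P_{R_i}\!\left(f(e^{itX}-1)\right)\big|_{\{\sin\tilde\theta\ge\eta\}}\right\|_\infty\le\frac{W}{\eta}|t|\,\|f\|_\infty$. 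For the variation, write each branch term as a product of the $\tilde\theta$-function $\frac{f(\tilde\theta)\sin\tilde\theta\,(e^{itX(\tilde\theta)}-1)}{|\Psi'_{R_i}(\tilde\theta)|}$ and the $\theta$-function $\frac1{\sin\theta}$, note $\tilde\theta\mapsto\theta$ is monotone on $J_{\xi,\ell}$, and apply the variation product and quotient rules as in~\eqref{eq:varquot}, using the bounds in (i)--(iii) together with $\Var(\sin\tilde\theta|_{J_{\xi,\ell}})\le2$, $\Var(e^{itX}-1|_{J_{\xi,\ell}})\le|t|\,\Var(X|_{J_{\xi,\ell}})=O(|t|)$ (since $\xi$ is constant on $J_{\xi,\ell}$, so $X=W/\tan\tilde\theta$ ranges over an interval of length $O(1)$), and $\sum_{\xi,\ell}\Var(f|_{J_{\xi,\ell}})=O(\|f\|_{BV})$ (the domains $J_{\xi,\ell}$ being pairwise disjoint). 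Each branch then contributes at most $C'\,|t|\,\|f\|_{BV}$ with $C'=C'(W,\eta,\kappa_{\min})$, and summing over the $\le C_\eta W$ branches and integrating over $d\nu(R_i)$ gives $\left\|(P_t-P_0)f\big|_{\{\sin\tilde\theta\ge\eta\}}\right\|_{BV}\le C|t|\,\|f\|_{BV}$ with $C$ depending on $W,\eta,\kappa_{\min},\kappa_{\max}$ but not on $t$ or $f$, as required (here $C$ may depend on $W$ since $W$ is a fixed parameter of the model).

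The only care-point, rather than a genuine obstacle, is fact (ii): that a branch over $\{\sin\tilde\theta\ge\eta\}$ cannot have a near-grazing exit angle, so that the $1/\sin\theta$ factor in the transfer operator stays bounded on the relevant part of $(0,\pi)$. This is where time-reversibility of the microstructure billiard, via Lemma~\ref{lem:SV}, enters; everything else is the bookkeeping of the variation product rule, which here is exactly the ``similar but easier'' version of the $\sin\tilde\theta<\eta$ estimates already carried out.
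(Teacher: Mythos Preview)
Your proof is correct and follows essentially the same route as the paper's: both work branch by branch with $P_{R_i}$ directly (no averaging), split via the variation product rule into the $1/\sin\theta$ factor and the $\tilde\theta$-dependent factor, and control the sum over the $O(W)$ branches in $\Lambda_\eta$ using $1/|\Psi'_{R_i}|=O(1/W)$, $|X|\le W/\eta$, and the lower bound on $\sin\theta$ coming from Lemma~\ref{lem:SV}. Your sup-norm bound via $P_{R_i}\1=\1$ and the explicit time-reversal justification of fact~(ii) are clean touches that the paper leaves implicit (it simply quotes $\sin\theta\ge\eta^2/C_\kappa^2$ and omits the $\|\cdot\|_\infty$ argument as ``simpler''), but the substance is the same.
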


	\begin{proof}
		We display the argument for bounding the variation. The argument for the $\|\cdot\|_\infty$ norm is simpler and omitted.
		
		If $\theta \mapsto \Psi_{R_i}^{-1}(\theta) = \tilde \theta \in J_{\xi,\ell}$
		refers to a single monotone branch of $\Psi_{R_i}^{-1}$,
		then $\Var_\theta (f \circ \Psi_{R_i}^{-1}) = \Var_{ \theta}(f|_{I_{\xi,\ell}})$.
		Recall from Lemma~\ref{lem:Lambda} that
		$\Lambda_\eta$ indicates the collection of branches of $\Psi^{-1}_{R_i}$ for $\sin \tilde\theta \geq \eta$.
		
		With these specified, writing again $\tilde\theta = \Psi_{R_i}^{-1}(\theta)$ and using $\Var(f \cdot g) = \| f \|_\infty \Var(g) + \| g \|_\infty \Var(f)$ multiple times, we compute that
		\begin{align*}
			\Var_\theta & \left( P_{R_i} \left(e^{itX}-1\right)f \big|_{\{\sin \tilde\theta \geq \eta\} } \right) \\
			\leq	&\  \Var_\theta\left(\frac{1}{\sin \theta} \Big|_{\{ \sin \tilde\theta \geq \eta\} }\right)
			\sum_{\ell \in \Lambda_\eta} \sup_{\tilde\theta \in J_\ell}
			\frac{f(\tilde\theta) \cdot \sin(\tilde \theta) (e^{itX}-1)}
			{ |\Psi'_{R_i}(\tilde\theta)| }  \Big|_{\{\sin \tilde\theta \geq \eta\}} \\
			& \ + \
			\sup_{\sin\tilde \theta \geq \eta} \left( \frac{1}{\sin \theta} \right) \sum_{\ell \in \Lambda_\eta}
			\Var_{\tilde\theta \in J_\ell} \left( \frac{f(\tilde\theta) \cdot \sin(\tilde \theta) (e^{-itX}-1) }
			{|\Psi'_{R_i}(\tilde\theta)| } \right)  \\
			\leq & \
			\frac{|t| W C_\kappa^2}{\eta^2} \| f \|_\infty
			\sum_{\ell \in \Lambda_\eta}  \left\| \frac{1 }{ | \Psi'_{R_i}(\tilde \theta) | }\right\|_\infty
			+ \frac{C_\kappa^2}{\eta^2} \sum_{\ell \in \Lambda_\eta}
			\Var_{\tilde\theta \in J_\ell}\left( \frac{f(\tilde\theta) \cdot \sin(\tilde \theta) (e^{-itX}-1) }
			{|\Psi'_{R_i}(\tilde\theta)| } \right)
			\\
			\leq & \
			\frac{|t| W C_\kappa^2}{\eta^2}
			\sum_{\ell \in \Lambda_\eta}  \left\| \frac{1 }{ | \Psi'_{R_i}(\tilde \theta) | }\right\|_\infty  \Big( 3\| f \|_\infty + \Var(f) \Big)
			+ \frac{|t| W C_\kappa^2}{\eta^2} \sum_{\ell \in \Lambda_\eta}
			\Var_{\tilde\theta \in J_\ell}\left( \frac{1}
			{|\Psi'_{R_i}(\tilde\theta)| } \right)
			\\
			& \leq \frac{|t| W C_\kappa^2}{\eta^2}  \Big( 3\| f \|_\infty + \Var(f) \Big) C_\eta +
			\frac{|t| W C_\kappa^2}{\eta^2} \| f \|_\infty C_\Psi
			\leq \frac{C' W}{\eta^2} |t|\|f\|_{BV},
		\end{align*}
		for some $C'>0$. Here we used that $\Lambda_\eta$ pertains to at most $C_\eta W$ branches (see from Lemma~\ref{lem:Lambda}), and Lemma~\ref{lem:VarPsi} to get the bound
		$ \sum_{\ell \in \Lambda_\eta}
		\Var_{\tilde\theta \in J_\ell}\left( \frac{1}
		{|\Psi'_{R_i}(\tilde\theta)| } \right) \leq C_\Psi$
		for some $C_\Psi> 0$.
		The desired continuity estimate for the variation of the averaged operator follows immediately.
	\end{proof}
	
	\subsection{Proof of Proposition~\ref{prop:cont}}
	
	This follows at once from Corollary~\ref{cor:ctvar} and Lemma~\ref{lem:contaw} with $C_{BV} = C_\infty + C'/\sqrt{\eta}$.

	\section{Spectral properties for the averaged operator}\label{sec:LY}
	
	Unlike in previous literature on random dynamical systems (see~\cite{ANS}
	and references therein), in the present
	set up we have uniform expansion (that is, not just in average), but to deal with the variation in $\theta$ of the transfer operator in the case that $\sin\theta<\eta$, we will heavily exploit the averaging (in a similar manner as in Section~\ref{sec:continuity}), see Section~\ref{subsec:lycl}.
	For $\sin\theta\ge \eta$, averaging plays no role (again, similar to the continuity estimate in Section~\ref{sec:continuity}), see Section~\ref{subsec:contaw}.
	
	The result below gives the required Lasota-Yorke inequalities in BV.
	
	\begin{prop}\label{prop:ly}
		There exist $\alpha\in (0,1)$ and $C_1, C_2>0$ so that for all $n\ge 1$ and all $f\in BV$,
		$$
		\|P^nf\|_{BV} \le \alpha^n\|f\|_{BV}+C_1\|f\|_\infty \quad \text{ and } \quad \|P^nf\|_\infty \le C_2\|f\|_\infty.
		$$
	\end{prop}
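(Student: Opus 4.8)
The plan is to deduce Proposition~\ref{prop:ly} from a single one-step Lasota--Yorke inequality in $BV$,
$$
\Var(Pf)\le\sigma\,\Var(f)+K\,\|f\|_\infty\qquad\text{and}\qquad\|Pf\|_\infty\le\|f\|_\infty,
$$
with constants $\sigma\in(0,1)$ and $K<\infty$, and then to iterate. The $L^\infty$ bound is immediate: $P$ is the transfer operator of the $\mu$-preserving random map, hence positive with $P\1=\1$, so $\|P^nf\|_\infty\le\|f\|_\infty$, which is already the second claimed inequality with $C_2=1$. For the iteration, $\Var(P^nf)\le\sigma\Var(P^{n-1}f)+K\|P^{n-1}f\|_\infty\le\sigma\Var(P^{n-1}f)+K\|f\|_\infty$ gives $\Var(P^nf)\le\sigma^n\Var(f)+\tfrac{K}{1-\sigma}\|f\|_\infty$, whence $\|P^nf\|_{BV}\le\sigma^n\|f\|_{BV}+\bigl(\tfrac{K}{1-\sigma}+1\bigr)\|f\|_\infty$, which is the assertion with $\alpha=\sigma$ and $C_1=\tfrac{K}{1-\sigma}+1$.

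To obtain the one-step variation estimate, start from \eqref{avTran}, written as
$$
Pf(\theta)=\int_0^1\sum_{\xi,\ell}\frac{f(\tilde\theta)\,\sin\tilde\theta}{|\Psi'_{R_i}(\tilde\theta)|\,\sin\theta}\,\1_{J_{\xi,\ell}}(\tilde\theta)\,d\nu(R_i),\qquad\tilde\theta=(\Psi_{R_i}|_{J_{\xi,\ell}})^{-1}(\theta),
$$
and use the recorded inequality $\Var(Pf)\le\int_0^1\Var(P_{R_i}f)\,d\nu(R_i)$ followed by subadditivity of $\Var$ over the branches, the product rule $\Var(gh)\le\sup|g|\Var(h)+\sup|h|\Var(g)$, and $\Var(g\1_I)\le\Var_I(g)+2\sup_I|g|$. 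Since the domains of monotonicity $J_{\xi,\ell}$ partition $(0,\pi)$ one has $\sum_{\xi,\ell}\Var_{J_{\xi,\ell}}(f)\le\Var(f)$, so the coefficient of $\Var(f)$ that comes out is at most the supremum of the weights $w_{\xi,\ell}(\theta)=\sin\tilde\theta/(|\Psi'_{R_i}(\tilde\theta)|\sin\theta)$ over all branches and all $R_i$, while the coefficient of $\|f\|_\infty$ is $\sup_\theta\sum_{\xi,\ell}\bigl(\Var_\theta(w_{\xi,\ell})+2\sup|w_{\xi,\ell}|\bigr)$ — where for the branches with $\sin\tilde\theta<\eta$ one retains the $d\nu$-integral (which restricts $R_i$ to a small set, see below) rather than passing to a supremum over $R_i$, whereas for $\sin\tilde\theta\ge\eta$ the estimate is taken uniform in $R_i$.

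For the coefficient of $\Var(f)$: by \eqref{twoDifs}, $1/|\Psi'_{R_i}(\tilde\theta)|\le\sin\tilde\theta\,\max\{\sin\tilde\theta,\sin\theta\}/(W\kappa_{\min})$, hence $|w_{\xi,\ell}|\le\sin^2\tilde\theta\,\max\{\sin\tilde\theta,\sin\theta\}/(W\kappa_{\min}\sin\theta)$. When $\sin\theta\ge\sin\tilde\theta$ this is $\le 1/(W\kappa_{\min})$; when $\sin\theta<\sin\tilde\theta$ the exit is near-grazing, so the trajectory has at most the three cheek patterns and Lemma~\ref{lem:SV} applies (to this trajectory, or, if $\sin\tilde\theta\ge\eta>\sin\theta$, to the time-reversed one), giving $\sin^3\tilde\theta\le C_\kappa^3(\sin\theta)^{3/2}$ or $\sin^3\tilde\theta\le C_\kappa\sin\tilde\theta\,\sin\theta$, whence $|w_{\xi,\ell}|\le C_\kappa^3/(W\kappa_{\min})$. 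Thus $\sup|w_{\xi,\ell}|\le C_\kappa^3/(W\kappa_{\min})<1$ for $W$ large — this is where the large width of the tube is used (cf.\ Remark~\ref{rmk:random}). For the coefficient of $\|f\|_\infty$ we split according to $\sin\tilde\theta\ge\eta$ or $\sin\tilde\theta<\eta$. In the former case there are at most $C_\eta W$ branches (Lemma~\ref{lem:Lambda}); on each of them $1/\sin\theta\le C_\kappa^2/\eta^2$ (again via Lemma~\ref{lem:SV} when the exit is near-grazing) with bounded variation, $\sup|w_{\xi,\ell}|=O(1/W)$, and $\sum_\ell\Var_\theta\bigl(\sin\tilde\theta/|\Psi'_{R_i}|\bigr)$ is finite by Lemma~\ref{lem:VarPsi}, so this part of the sum is finite. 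In the latter case one runs the averaging argument of Section~\ref{sec:p}: the $d\nu$-integral restricts $R_i$ to the interval $p(J_{\xi,\ell},\tilde\theta)$ of $\nu$-measure $\lesssim W/|\xi|^2$ by Lemma~\ref{lem:p}, and after partitioning each range $\Theta_{\xi,\ell}$ as in Lemma~\ref{lem:ctvar} this combines with $1/\sin\theta\lesssim|\xi|^2/W^2$, Lemma~\ref{lem:VarPsi} and Corollary~\ref{cor:Q} into a sum $\sum_{|\xi|\ge\xi_\eta}$ of terms $\lesssim|\xi|^{-2}$ or better, which converges.

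The only genuinely new point compared with Section~\ref{sec:continuity} is the bound $\sup|w_{\xi,\ell}|<1$; the subtlety there is that $\sin\tilde\theta/\sin\theta$ is unbounded on the near-grazing ``left cheek'' branches (where $\sin\theta\to0$ while $\sin\tilde\theta$ stays bounded away from $0$), so one cannot simply bound $|w_{\xi,\ell}|$ by $\tfrac{1}{|\Psi'_{R_i}|}\cdot\tfrac{\sin\tilde\theta}{\sin\theta}$ and must instead keep the sharper form of \eqref{twoDifs} carrying the factor $\max\{\sin\tilde\theta,\sin\theta\}$ together with Lemma~\ref{lem:SV}. All the remaining estimates — the bookkeeping of the boundary terms generated by the indicators $\1_{J_{\xi,\ell}}$ and the partitioning of the ranges $\Theta_{\xi,\ell}$ in the small-angle regime — are identical in structure to, but strictly easier than, the continuity estimates already established in Lemmas~\ref{lem:continf}, \ref{lem:ctvar} and~\ref{lem:contaw}, since the present sums carry no factor $e^{itX}-1$ (which there contributed an extra $|t|\,|X|\sim|t|\,|\xi|$ to each summand).
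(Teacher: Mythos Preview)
Your proposal is correct and follows essentially the same route as the paper: split into $\sin\tilde\theta\ge\eta$ (Lemma~\ref{lem:lylarge}, no averaging) and $\sin\tilde\theta<\eta$ (Lemma~\ref{lem:varpfsm}, with averaging via Lemma~\ref{lem:p}), invoke Lemmas~\ref{lem:Lambda}, \ref{lem:VarPsi}, \ref{lem:SV} and Corollary~\ref{cor:Q} for the weights, and iterate. Two small points where your write-up is actually a bit cleaner than the paper's: (i) you get $\|Pf\|_\infty\le\|f\|_\infty$ directly from positivity and $P\1=\1$, whereas the paper extracts a constant $C_2$ from the explicit branch sums; (ii) you isolate the coefficient of $\Var(f)$ as the single uniform quantity $\sup_{R_i,\xi,\ell}|w_{\xi,\ell}|=O(1/W)$ (using \eqref{twoDifs} and Lemma~\ref{lem:SV}/time-reversal for the near-grazing exits), whereas the paper mixes the averaging into both the $\Var(f)$ and $\|f\|_\infty$ pieces in Lemma~\ref{lem:varpfsm}. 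The paper's own use of ``$\sin\tilde\theta\ge\eta\Rightarrow\sin\theta\ge\eta^2/C_\kappa^2$'' in the proof of Lemma~\ref{lem:lylarge} is exactly your time-reversal application of Lemma~\ref{lem:SV}, so your handling of the case $\sin\theta<\sin\tilde\theta$ matches the paper's.
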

	The proof of this result is provided in 
	Sections~\ref{subsec:lycl}--\ref{subsec:ly}.
	
	\subsection{Spectral decomposition of $P$}
	\label{subsec:spde}
	
	Proposition~\ref{prop:ly} together with a classical result~\cite{ITM}
	implies that when regarded as an operator in BV, for $n\ge 1$,
	$
	P^n=\sum_i\lambda_i^n \Pi_i + Q^n,
	$
	where $\lambda_i$ are eigenvalues of modulus $1$, $\Pi_i$ are finite-rank projectors onto the associated
	eigenspaces, and $Q$ is a bounded operator with a spectral radius strictly less than $1$. Also, there are only finitely many eigenvalues on the unit circle, and all $\lambda_i$ are roots of unity. (This type of decomposition for the perturbed averaged operator $P_t$ would be enough for the proof of Theorem~\ref{thm:main}.)

	Moreover, we can also ensure that $1$ is a simple isolated eigenvalue in the spectrum of $P$. To do so, we will employ the correspondence between properties of random dynamical systems and associated Markov chains
	as in, for instance,~\cite{Kif, ANS}.

	Following a similar notation as in~\cite[Sections 2 and 4]{ANS},
	we note that the averaged Koopman operator $U$ acts on functions defined on $(0,\pi)$ via $Uf=\int_0^1 f(\Psi_{R_i}) d\nu(R_i)$.
	In particular, $U$ corresponds to a transition probability matrix on $(0,\pi)$
	defined in general for $n$-steps by
	\begin{align*}
		(U^n \1_A)(\theta)=\nu^{\otimes \Z} \left((R_j)_{j \in \Z}\in [0,1]^{\Z}: \Psi_{R_{n-1}} \circ \cdots \circ \Psi_{R_0}(\theta)\in A\right), \quad A\in\cA,
	\end{align*}
	where 
	$\cA$ is the $\sigma$-algebra of $\mu$-measurable sets on $(0,\pi)$.
	
	Let
	\begin{align}\label{defMC}
		Y_n((R_i)_{i \in \Z}, \theta)=\Psi_{R_{n-1}} \circ \cdots \circ \Psi_{R_0}(\theta),\quad \theta \in A\in\cA,\quad (R_0, \dots, R_{n-1})\in [0,1]^n.
	\end{align}
	Then $(Y_n)_{n\ge 1}$ defines a homogeneous Markov chain on state space
	$((0,\pi),\cA)$. The transition operator (probability matrix) is given by $U$.

	Recall that $\mu$ is an invariant measure for $\Psi_{R_i}$.
	Since $\mu(A)=\int_0^1\mu(\Psi_{R_i}^{-1} (A))\, d\nu(R_i)$ for each $A\in\cA$,
	$\mu$ is a stationary measure for the associated Markov chain
	and $\mu U=\mu$.
	As clarified in Lemma~\ref{lem:erg} below,
	the Markov chain $(Y_n)_{n\ge 1}$ is aperiodic. Thus, $\mu$ is the unique stationary measure for this Markov chain and thus the unique left eigenvector of $U$ with eigenvalue $1$, which is simple.
	Recalling Remark~\ref{rmk:integr}, we see that
	the averaged operator $P$ is the dual, or adjoint, operator of $U$, i.e., $\int_0^\pi P f\, g\,d\mu=\int_0^\pi f\, Ug\, d\mu$.

	It follows that the constant function $1$ is the unique eigenfunction of modulus $1$ for $P$. Indeed, if $f \not\equiv 1$ were a fixed point of 
	the average transfer operator: $f = Pf := \int_{[0,1]^\Z} P_{(R_i)_{i \in \Z}} f \, d\nu^{\otimes \Z}$, that is, if $1$ is not a simple eigenvalue of $P$, then,
	using duality for an arbitrary $g:(0,\pi) \to \R$, 
	\begin{eqnarray*}
		\int_0^\pi \int_{[0,1]^\Z} f \cdot g \circ \Psi_{R_i} \, d\nu^{\otimes \Z} \, d\mu
		&=& \int_0^\pi \int_{[0,1]^\Z} P_{(R_i)_{i \in \Z}} f \cdot g \, d\nu^{\otimes \Z} \, d\mu \\
		&=& \int_0^\pi \left( \int_{[0,1]^\Z} P_{(R_i)_{i \in \Z}} f \, d\nu^{\otimes \Z} \right) \cdot g \, d\mu\\
		&=& \int_0^\pi f \cdot g \, d\mu 
		= \int_0^\pi f \circ \Psi_{R_i} \cdot g \circ \Psi_{R_i} \, d\mu\\
		&=& \int_0^\pi \int_{[0,1]^\Z} f \circ \Psi_{R_i} \cdot g \circ \Psi_{R_i}    \, d\nu^{\otimes \Z} \, d\mu.
	\end{eqnarray*}
	Since $g$ is arbitrary,
	this shows that 
	$$
	\int_{[0,1]^\Z} f(x) \circ \Psi_{R_i} \cdot g \circ \Psi_{R_i}(x)    \, d\nu^{\otimes \Z} =
	\int_{[0,1]^\Z}  f(x) \cdot g \circ \Psi_{R_i}(x) \, d\nu^{\otimes \Z}.
	$$
	for $\mu$-a.e.\ $x \in (0,\pi)$.
	The special case $g \equiv 1$ gives
	$$
	Uf(x) :=  \int_{[0,1]^\Z} f \circ \Psi_{R_i}(x)  \, d\nu^{\otimes \Z} = \int_{[0,1]^\Z}  f(x) \, d\nu^{\otimes \Z} = f(x) \quad \mu\text{-a.s.},
	$$
	but this contradicts that $U$ has a unique fixed point, i.e., it contradicts the uniqueness of the stationary measure.
	
	Hence $1$ is a simple eigenvalue of $P$. If $\lambda_i$ were another eigenvalue of the unit circle, say $\lambda_i^k = 1$, then we repeat the argument with $U^k$ and $P^k$.
	This would imply that the eigenvalue $1$ would not be simple for this iterate,
	contradicting the aperiodicity of $U$.	
	Thus, $1$ is the only eigenvalue on the unit circle, and
	\begin{align*}
		P^n=\Pi+ Q^n,\quad \Pi f=\int_0^\pi f\, d\mu, \quad \|Q^n f\|_{BV}\le \delta^n,\text{ for some }\delta\in (0,1).
	\end{align*}
	An immediate consequence is exponential decay of correlation for $f\in BV$ and $g\in L^\infty$, in the sense that
	$\left|\int_0^\pi f\, U^n g\, d\mu-\int_0^\pi f\, d\mu\int_0^\pi g\, d\mu\right|\le C \delta^n \|f\|_{BV}\, \|g\|_{L^\infty}$, for some $C>0$ and $\delta \in (0,1)$ independent of $f, g$, and $n \geq 1$.

	\subsection{Aperiodicity of the associated Markov chain}
	\label{sec:erg}
	
	Let $\cI^n_i(A) = \{ \Psi_{R_{i+n-1}} \circ \cdots \circ \Psi_{R_i}(\thetaout_{i-1}) :
	\theta \in A, (R_i, \dots, R_{i+n-1}) \in [0,1]^n\}$.
	
	\begin{lemma}\label{lem:I}
		For every $\theta \in (0,1)$ and $i \in \Z$, we have
		$$
		\bigcup_{n \geq 0} \cI^n_i(\{\thetaout_{i-1} \}) = (0,\pi).
		$$
	\end{lemma}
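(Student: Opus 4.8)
The plan is to show that $G:=\bigcup_{n\ge 0}\cI^n_i(\{\thetaout_{i-1}\})$ equals $(0,\pi)$ by first producing an open interval inside $G$ and then enlarging it until it fills $(0,\pi)$, exploiting the uniform expansion of the fibre maps and the freedom in the randomization. Throughout we use that, because the $R_j$ are independent, $\cI^{m+n}_i(A)=\cI^{n}_{i+m}(\cI^{m}_i(A))$, so in particular $\Psi_R(G)\subseteq G$ for every $R\in[0,1]$.

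First I would show that $\cI^1_i(\{\thetaout_{i-1}\})=\{\Psi_R(\thetaout_{i-1}):R\in[0,1]\}$ contains a nondegenerate open interval. In the coordinates of the microstructure the particle enters $\mic_i$ at the point $(r_{i,0},\theta_{i,0})=(R,\thetaout_{i-1})$ of the straight side $\Gamma_{i,0}$, so $\partial_R(r_{i,0},\theta_{i,0})=(1,0)$. The particle has at least one (and, by Lemma~\ref{lem:finite}, at most $N$) collisions before leaving $\mic_i$, and every closed side of $\mic_i$ has curvature $\ge\kappa_{\min}>0$; hence~\eqref{eq:DF} with $\kappa_{i,0}=0$ gives $\partial_R\theta_{i,1}=-\kappa_{i,1}\sin\thetaout_{i-1}/\sin\theta_{i,1}\ne 0$ and both entries of $\partial_R(r_{i,1},\theta_{i,1})$ negative, and applying the remaining $DF_{i,j}$, which equal $-1/\sin\theta_{i,j}$ times an all-positive matrix, the sign alternates but never vanishes. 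So on each maximal $R$-interval with constant collision pattern the function $R\mapsto\Psi_R(\thetaout_{i-1})$ is continuous and strictly monotone, and its range contains a nondegenerate open interval $I_0\subseteq G$.

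It then suffices to establish the covering property: for every nondegenerate open interval $J\subseteq(0,\pi)$ there are $n\ge 1$ and $R_1,\dots,R_n\in[0,1]$ with $\Psi_{R_n}\circ\cdots\circ\Psi_{R_1}(J)\supseteq(0,\pi)$. Indeed, applied to $J=I_0$ this gives $(0,\pi)\subseteq\cI^{1+n}_i(\{\thetaout_{i-1}\})\subseteq G$. To prove the covering property I would use two facts: by~\eqref{twoDifs} every branch of every $\Psi_R$ has derivative at least $1+W\kappa_{\min}>1$ in absolute value, so (since $W$ is large) the expansion is strong, and stronger still near grazing; and all singularities of $\Psi_R$ come from grazing or corner collisions inside $\mic_i$, whose $\thetaout_{i-1}$-locations vary continuously with $R$. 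Counting branches with Lemma~\ref{lem:Lambda} in the bulk $\{\sin\thetaout_{i-1}\ge\eta\}$ and with the bound $N$ per microstructure near grazing, one can always vary $R$ so as to place a subinterval of $J$ inside a single monotone branch of $\Psi_R$ of controlled relative length; its image is then an interval longer than that subinterval. Iterating, a short interval is stretched (by a factor comparable to $W$ per step) until it has a definite length $\ell_*>0$ inside the bulk; sweeping $R$ over $[0,1]$ then moves these images around so that they cover the whole bulk; and the near-grazing angles are reached because, by Lemma~\ref{lem:SV}, a left-cheek branch sends a bulk angle to an angle with arbitrarily small $\sin\theta$ (and a right-cheek branch does the reverse), so a final step --- together with the surjectivity of the fibre maps onto $(0,\pi)$ coming from billiard reversibility --- produces an image equal to all of $(0,\pi)$.

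The main difficulty is this last step. Although every branch expands by a large factor, the number of branches meeting a given interval also grows with the width $W$, so iterating a \emph{fixed} map $\Psi_R$ need not lengthen an interval; it is exactly the liberty to move the microstructure, i.e.\ to choose a new $R$ at each step, that lets one both dodge the (moving) singularities --- always finding an expanding sub-branch of controlled relative size --- and spread the images out until they tile $(0,\pi)$. The secondary point requiring care is the near-grazing regime, where $\Psi_R$ has infinitely many branches accumulating at $0$ and $\pi$; this is handled throughout by the comparison $C_\kappa^{-1}\sin^2\theta\le\sin\thetaout_{i-1}\le C_\kappa\sqrt{\sin\theta}$ of Lemma~\ref{lem:SV}.
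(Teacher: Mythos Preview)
Your approach is genuinely different from the paper's, and the hard part you flag (``dodge singularities by varying $R$'') is precisely where it remains a sketch.  The paper's proof does not use expansion at all.  Instead, for each $\theta$ it identifies two special values $R^-(\theta)$ and $R^+(\theta)$ of the randomization such that: near $R^-$ the trajectory has a single collision with the left cheek, which forces $\Psi_R(\theta)<\theta$, and near $R^+$ a single collision with the right cheek, which forces $\Psi_R(\theta)>\theta$.  Iterating the left-cheek choice gives a strictly decreasing sequence $\alpha_k\searrow 0$; iterating the right-cheek choice gives $\beta_k\nearrow\pi$ (the limits are $0$ and $\pi$ because $\theta\mapsto\lim_{R\searrow R^-(\theta)}\Psi_R(\theta)$ has no interior fixed point).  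Since $R\mapsto\Psi_R(\theta)$ is continuous, the whole interval $(\alpha_k,\beta_k)\subset\cI^k_i(\{\thetaout_{i-1}\})$, and taking the union over $k$ gives $(0,\pi)$.  No branch counting, no stretching, no near-grazing comparison is needed.

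Your expansion argument, by contrast, runs into a real obstruction that you mention but do not resolve.  By Lemma~\ref{lem:Lambda} there are $\sim C_\eta W$ monotone branches of $\Psi_R$ in the bulk $\{\sin\tilde\theta\ge\eta\}$, while the expansion on each branch from~\eqref{twoDifs} is $\sim W\kappa_{\min}$; so the guaranteed length gain per step, via pigeonhole on sub-branches, is only the constant factor $\kappa_{\min}/C_\eta$, which need not exceed $1$.  Your remedy is to vary $R$ and ``always find an expanding sub-branch of controlled relative size'', but the singularities between the $J_\xi$ are spaced $\sim 1/W$ and shift with $R$ only by at most $1$ unit of displacement; for a fixed interval $J$ of length $\gg 1/W$ there is no $R$ that removes all the cuts, and you give no argument that the images of the resulting pieces overlap into a longer interval.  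The appeal to ``surjectivity of the fibre maps from billiard reversibility'' at the end is also not established anywhere in the paper and would itself need proof.  The paper's monotone left/right-cheek argument sidesteps all of this, at the cost of introducing the quantities $R^\pm(\theta)$ --- that is the key idea you are missing.
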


	\begin{figure}[ht]
		\begin{center}
			\begin{tikzpicture}[scale=1.3]
				\draw[-, thick] (1.3,5)--(2.36,3.24);
				\draw[->, dotted] (2.36,3.24) -- (8.5, 4.1);
				\draw[-, thick] (5.1,5)--(6.26,2.98);
				\draw[->, dotted] (6.26,2.98) -- (3, 4.7);
				\draw[<->] (0,4.3) -- (1.9, 4.3); \node at (1, 4.5) {\small $R^-(\thetaout_{i-1})$};
				\draw[<->] (0,4.8) -- (5.7, 4.8); \node at (3, 5) {\small $R^+(\thetaout_{i-1})$};
				\draw[<-] (8, 4) arc (90:151:2);
				\draw[<-] (6.23, 2.95) arc (90:130:3);
				\draw[->] (0, 4) arc (90:30:4);
				\draw[-, dotted] (-1,4) -- (9,4);
				\node at (-1, 4.2) {\small $\ell_0$};
				\draw[->] (2.4, 4) arc (360:310:0.5); \node at (2.65, 3.8) {\small $\thetaout_{i-1}$};
				\draw[->] (6.1, 4) arc (360:310:0.5); \node at (6.35, 3.6) {\small $\thetaout_{i-1}$};
				\draw[->] (7.8, 4) arc (180:30:0.2); \node at (7.9, 4.5) {\small $\Psi_{R^-}(\thetaout_{i-1})$};
				\draw[->] (3.8, 4) arc (180:120:0.2); \node at (3.9, 4.35) {\small $\Psi_{R^+}(\thetaout_{i-1})$};
			\end{tikzpicture}
			\caption{The definition of $R^+(\thetaout_{i-1})$ and $R^-(\thetaout_{i-1})$ illustrated.}\label{fig:Rpm}
		\end{center}
	\end{figure}

	\begin{proof}
		For $\theta \in (0,\pi)$, let
		$$
		R^+(\theta) =
		\inf_{R \leq 1} \{ R : \text{the random trajectory in } \mic_i
		\text{ hits the right cheek and then leaves } \mic_i\}
		$$
		and
		$$
		R^-(\theta) =
		\sup_{R \geq 0} \{ R : \text{the random trajectory in } \mic_i
		\text{ hits the left cheek and then leaves } \mic_i\},
		$$
		see Figure~\ref{fig:Rpm}.
		This means that at $R^+$, the random trajectory either hits a corner point of the right cheek, or, after a collision with the right cheek, has a grazing collision with the left cheek,
		and similar for $R_-$.
		By convexity of the right and left cheek, for each $\theta \in (0,\pi)$,
		\begin{equation}\label{eq:Rpm}
			\lim_{R \searrow R^-(\theta)} \Psi_R(\theta) < \theta
			\quad  \text{ and } \quad
			\lim_{R \nearrow R^+(\theta)} \Psi_R(\theta) > \theta.
		\end{equation}
		Now define recursively
		$$
		\alpha_k =
		\begin{cases}
			\thetaout_{i-1} & \text{ if } k = 0,\\
			\lim_{R \searrow R^-_{i+k}(\alpha_{k-1})}\Psi_R(\alpha_{k-1}) & \text{ if } k \geq 1,
		\end{cases}
		\quad 
		\beta_k =
		\begin{cases}
			\thetaout_{i-1} & \text{ if } k = 0,\\
			\lim_{R \nearrow R^+_{i+k}(\beta_{k-1})}\Psi_R(\beta_{k-1}) & \text{ if } k \geq 1.
		\end{cases}
		$$
		Then $(\alpha_k)_{k \geq 0}$ is decreasing, and since
		it is bounded below by $0$, there is a limit $L$, which is a fixed point of the operation
		$\theta \mapsto \lim_{R \searrow R^+(\theta)} \Psi_R(\theta)$.
		This means by \eqref{eq:Rpm} that $L = 0$.
		The same argument shows that $(\beta_k)_{k \geq 0}$ is increasing to the limit $\pi$.
		
		Note that $(\alpha_1,\beta_1) = \{ \Psi_{R_i}(\thetaout_{i-1}) : R_i^- < R_i < R_i^+ \}$,
		and for larger values of $i$, there are subsets $G_j \subset [0,1]$ for $R_{i+j-1}$ so that
		$(\alpha_k,\beta_k) = \{ \Psi_{R_{i+k-1}} \circ \dots \circ \Psi_{R_i}(\thetaout_{i-1}) :
		R_{i+j} \in G_j, 0 \leq j < k \}$.
		If follows that
		$$
		\bigcup_{n \geq 0} \cI^n_i(\thetaout_{i-1})
		\supset \bigcup_{k\geq 0} (\alpha_k, \beta_k) = (0,\pi),
		$$
		as required.
	\end{proof}

	\begin{lemma}\label{lem:erg}
		The Markov chain $(Y_n)_{n\ge 1}$ defined in~\eqref{defMC} is aperiodic.
	\end{lemma}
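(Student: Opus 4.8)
The plan is to derive aperiodicity from the reachability statement of Lemma~\ref{lem:I}, upgraded so that reachability holds for \emph{every} sufficiently large number of steps, and then to convert this topological reachability into the measure-theoretic notion of aperiodicity by exploiting the absolute continuity of the multi-step transition kernels of $(Y_n)$. First I would revisit the proof of Lemma~\ref{lem:I}: there one constructs nested open intervals $(\alpha_k,\beta_k)$ with $\alpha_k\searrow 0$, $\beta_k\nearrow\pi$, with $\theta\in(\alpha_k,\beta_k)$ for every $k\ge 1$, and with $(\alpha_k,\beta_k)\subseteq\cI^k_i(\{\theta\})$ (these intervals are precisely the points reachable from $\theta$ in $k$ steps using the restricted ranges $G_0,\dots,G_{k-1}$ for the $R$'s). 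Since the $(\alpha_k,\beta_k)$ are nested and exhaust $(0,\pi)$, for every $\theta\in(0,\pi)$ and every open interval $J\subseteq(0,\pi)$ there is $n_0=n_0(\theta,J)$ with $J\subseteq\cI^n_i(\{\theta\})$ for all $n\ge n_0$; taking $J$ to be a small neighbourhood of $\theta$ itself gives $\theta\in\cI^n_i(\{\theta\})$ for all $n\ge1$, so the chain admits ``return loops'' to any neighbourhood of $\theta$ of every length $\ge n_0$, including coprime ones.

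Next I would record that, for each $\theta$, the $n$-step transition kernel $U^n(\theta,\cdot)$ (the law of $\Psi_{R_{n-1}}\circ\cdots\circ\Psi_{R_0}(\theta)$ with the $R_j$ i.i.d.\ $\nu$) is absolutely continuous with respect to Lebesgue measure, with density strictly positive on $\cI^n_i(\{\theta\})$. For $n=1$ this holds because $R\mapsto\Psi_R(\theta)$ is piecewise $C^1$ with nowhere-vanishing derivative: varying $R$ shifts the entry position along $\Gamma_{i,0}$, and by the derivative formula~\eqref{eq:DF} (all matrix entries nonzero) the exit angle depends strictly monotonically on the entry position on each branch; one then uses that $d\nu/d\Leb$ exists, is bounded, and is positive (as for the uniform distribution suggested after~\eqref{eq:randomm}, which is what makes the $R$-ranges used in the proof of Lemma~\ref{lem:I} carry positive $\nu$-mass). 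For $n\ge 2$ both absolute continuity and positivity of the density on the image are preserved under composition with the further piecewise-monotone maps $\Psi_{R_j}$.

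Finally I would argue by contradiction. Suppose $(Y_n)$ has period $d\ge 2$, i.e.\ there are disjoint Borel sets $A_0,\dots,A_{d-1}$ with $\mu(A_j)>0$, $\bigcup_jA_j=(0,\pi)$ mod $\mu$, and $U(\theta,A_{j+1\bmod d})=1$ for $\mu$-a.e.\ $\theta\in A_j$. Since $d\mu=\tfrac12\sin\theta\,d\Leb$ is equivalent to Lebesgue measure, each $A_j$ has positive Lebesgue measure, so we may pick $\theta_0\in A_0$ at which the above full-measure transition property holds and such that $\Leb(A_0\cap J)>0$ for every neighbourhood $J$ of $\theta_0$; fix such a small $J\ni\theta_0$. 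Because the kernels are absolutely continuous (hence $\ll\mu$), the periodic structure propagates and $U^n(\theta_0,A_{n\bmod d})=1$ for all $n$. Now choose $n\ge n_0(\theta_0,J)$ with $n\equiv 1\pmod d$. On the one hand $U^n(\theta_0,A_1)=1$, so $U^n(\theta_0,A_0\cap J)=0$ since $A_0\cap J\subseteq A_0\subseteq(0,\pi)\setminus A_1$; on the other hand $\cI^n_i(\{\theta_0\})\supseteq J$ and $U^n(\theta_0,\cdot)$ has density positive on $\cI^n_i(\{\theta_0\})$, so $U^n(\theta_0,A_0\cap J)>0$ because $\Leb(A_0\cap J)>0$. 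This contradiction forces $d=1$, i.e.\ the chain is aperiodic.

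The step I expect to be the main obstacle is precisely the passage from the topological reachability of Lemma~\ref{lem:I} to a genuinely probabilistic statement about $U^n(\theta_0,\cdot)$: one must know that these kernels are absolutely continuous and that their densities are strictly positive on the reachable set, which is what the second paragraph supplies (and which is why the regularity and positivity assumptions on $\nu$ enter here). Everything else is bookkeeping with the arithmetic progression $n\equiv 1\ (\mathrm{mod}\ d)$ and the equivalence $\mu\sim\Leb$.
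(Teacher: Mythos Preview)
Your argument is correct and relies on the same reachability ingredient (Lemma~\ref{lem:I}) as the paper, but the way you convert reachability into aperiodicity is genuinely different. The paper does not argue via a cyclic decomposition and transition densities. Instead it uses Breiman's \emph{indecomposability of all orders}: given $A$ with $U^n\1_A=1$ $\mu$-a.e.\ on $A$, it observes that $\theta\in\cI^1(\{\theta\})$ for every $\theta$ (the paper invokes the boundary values $\Psi_0(\theta)=\Psi_1(\theta)=\theta$; your nested-interval observation $\theta\in(\alpha_1,\beta_1)$ gives the same), hence $\cI^k(\{\theta\})\subseteq\cI^{nk}(\{\theta\})$ for all $k$, so the sets $\cI^{nk}(\{\theta\})$ still exhaust $(0,\pi)$. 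Since $U^n\1_A(\theta)=1$ forces $\cI^{nk}(\{\theta\})\subseteq A$ modulo a $\mu$-null set for each $k$, one concludes $\mu(A)=1$.

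The trade-off: the paper's route is shorter and never needs to show that $U^n(\theta,\cdot)$ has a strictly positive density on $\cI^n(\{\theta\})$; it only needs the (weaker) passage from ``$U^n\1_A(\theta)=1$'' to ``$\cI^n(\{\theta\})\subseteq A$ mod $\mu$-null''. Your route is the more standard Markov-chain argument and makes the role of $\nu$ completely explicit, at the cost of the extra absolute-continuity lemma in your second paragraph. Note that both approaches tacitly require $d\nu/d\Leb>0$ (or at least full support of $\nu$), a point you flag explicitly but which the paper's stated hypothesis ``$d\nu/d\Leb$ exists and is bounded'' does not literally include; without it neither your positive-density claim nor the paper's inclusion $\cI^n(\{\theta\})\subseteq A\cup N$ is justified.
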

	
	\begin{proof}
		We prove aperiodicity by showing that $(Y_n)_{n \geq 1}$
		is indecomposable of all orders.
		Let $n\in \N$ be given. For indecomposability of order $n$, it is sufficient to prove for any $A \in \cA$ with $\mu(A)>0$ and which satisfies $U^n\1_A(\theta) = 1$ for $\mu$-a.e $\theta \in A$, that $\mu(A) = 1$, see Definition 7.14 from Breiman's book \cite{brei}. Let such an $A$ be given and take $\theta \in A$. Then
		$$
		(U^n\1_A)(\theta) = \nu^{\otimes \Z} \left((R_j)_{j \in \Z}\in [0,1]^{\Z}: \Psi_{R_{n-1}} \circ \cdots \circ \Psi_{R_0}(\theta)\in A\right) = 1,
		$$
		so for $\nu^{\otimes\Z}\times \mu$-a.e.\ $((R_i)_{i\in\Z},\theta) \in \left[0,1\right]^{\Z}\times A$ we get
		$$T^{n}((R_i)_{i\in\Z},\theta) \in \left[0,1\right]^{\Z}\times A.$$
		Thus, for $\mu$-a.e. $\theta\in A$ we have $\mathcal{I}^n(\{\theta\}) \subseteq A \cup N$ for some set $N \in \cA$ with $\mu(N) = 0$, and likewise for any $k\in\N$ we get
		$$\mathcal{I}^{nk}(\theta) \subseteq A \cup N_k,$$
		for some $N_k \in \cA$ with $\mu(N_k) = 0$. We have that $\mathcal{I}^{nk}(\theta)$ is increasing in $k$ since $\Psi(0,\theta) = \Psi(1,\theta) = \theta$, and this implies that $\mathcal{I}^k(\theta) \subset \mathcal{I}^{nk}(\theta)$. Therefore, using Lemma~\ref{lem:I}, we get
		$$
		(0,\pi) = \bigcup_{k \geq 1}\mathcal{I}^{k}(\theta) \subseteq \bigcup_{k \geq 1}\mathcal{I}^{nk}(\theta) \subseteq A \cup \left(\bigcup_{k \geq 1}N_k\right) \subseteq (0,\pi),
		$$
		and this implies that $\mu(A) = 1$.
	\end{proof}

	\subsection{Estimating $\|Pf\|_{BV}$ when $\sin\tilde\theta < \eta$}
	\label{subsec:lycl}

	We start with $\|Pf\|_{BV}$ of the average transfer operator $P$ defined  in~\eqref{avTran}.
	
	\begin{lemma}\label{lem:varpfsm}
		Assume that $\sin\tilde\theta<\eta$. Then there exists $\alpha\in (0,1)$ so that for all $f\in BV$,
		$\|Pf\|_{BV} \le \alpha\|f\|_{BV}$.
	\end{lemma}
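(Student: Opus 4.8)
The plan is to estimate $\|Pf|_{\{\sin\tilde\theta<\eta\}}\|_\infty$ and $\Var_\theta\bigl(Pf|_{\{\sin\tilde\theta<\eta\}}\bigr)$ separately and to show that each is bounded by $\tfrac{C}{W}\|f\|_{BV}$ for a constant $C$ depending only on $\kappa_{\min}$, $C_\kappa$ and $h^+=\sup\tfrac{d\nu}{d\Leb}$; since $W$ is taken large (Remark~\ref{rmk:random}) this gives $\|Pf|_{\{\sin\tilde\theta<\eta\}}\|_{BV}\le\alpha\|f\|_{BV}$ with $\alpha:=2C/W<1$. Both estimates are the proofs of Lemmas~\ref{lem:continf} and~\ref{lem:ctvar} with the factor $e^{itX}-1$ simply deleted throughout: in those proofs $|e^{itX}-1|$ contributed an extra multiple of $|t|\,|X(\tilde\theta)|=|t|\,W/|\tan\tilde\theta|\gtrsim|t|\,(|\xi|-1)$, so dropping it only makes every summand smaller — it raises by one the power of $|\xi|$ in each of the convergent sums — and the same bookkeeping goes through with room to spare. (The resulting bounds in fact carry a positive power of $\eta$ as well, so they are also small when $\eta$ is small.)

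For the supremum I would start from the representation~\eqref{eq:avadv}, which expresses $Pf(\theta)|_{\{\sin\tilde\theta<\eta\}}$ as $\sum_{|\xi|\ge\xi_\eta}\sum_{\ell\in\{L,R,D\}}\int_{p(J_{\xi,\ell},\tilde\theta)}\tfrac{f(\tilde\theta)}{|\Psi'_{R_i}(\tilde\theta)|}\tfrac{\sin\tilde\theta}{\sin\theta}\1_{J_{\xi,\ell}}(\tilde\theta)\,d\nu(R_i)$, and estimate term by term exactly as in~\eqref{eq:psm2}: bound $|f|$ by $\|f\|_\infty$; $\nu(p(J_{\xi,\ell},\tilde\theta))$ by $\tfrac{h^+W}{2\kappa_{\min}(|\xi|-1)^2}$ via Lemma~\ref{lem:p}; $\tfrac1{|\Psi'_{R_i}(\tilde\theta)|}$ by $\tfrac{\sin\tilde\theta\max\{\sin\tilde\theta,\sin\theta\}}{W\kappa_{\min}}$ via~\eqref{twoDifs} (with an extra gain $\gtrsim\kappa_{\min}/\sin\theta$ for the double cheek); and $\sin\tilde\theta\sim W/|\xi|$, $\tfrac1{\sin\theta}\le C_\kappa^2/\sin^2\tilde\theta$ via Lemma~\ref{lem:SV}. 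Each left-cheek summand is then $O(\|f\|_\infty W/|\xi|^3)$ and the right- and double-cheek summands are no larger; summing over $|\xi|\ge\xi_\eta\sim W/\eta$ and over $\ell$ yields $\|Pf|_{\{\sin\tilde\theta<\eta\}}\|_\infty\le\tfrac{C}{W}\|f\|_\infty$.

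For the variation I would follow the proof of Lemma~\ref{lem:ctvar} line by line. By subadditivity of $\Var$ it suffices to bound $\Var_\theta$ of each term of~\eqref{eq:avadv}; as in Section~\ref{subsec:varsmall}, partition the output range $\Theta_{\xi,\ell}=\bigcup_{R_i}\Psi_{R_i}(J_{\xi,\ell})$ into subintervals $\Theta_{\xi,\ell,k}$, $k\in K_{\xi,\ell}$, on which the corresponding set of $R_i$-values has $\nu$-measure $\le\tfrac{h^+W}{\kappa_{\min}\xi^2}$ (Lemma~\ref{lem:p}); use~\eqref{eq:varsum} to pull this weight out of the integral, leaving $\tfrac{C_pW}{\xi^2}\sum_k\sup_{R_i}\Var_\theta\bigl(\tfrac{f(\tilde\theta)}{|\Psi'_{R_i}|}\tfrac{\sin\tilde\theta}{\sin\theta}\1_{\Theta_{\xi,\ell,k}}\circ\Psi_{R_i}\bigr)$; split this variation as in~\eqref{eq:var1} into a term $I_{\xi,\ell,k}$ carrying $\Var(1/\sin\theta)$ and a term $II_{\xi,\ell,k}$ carrying $\Var(\sin\tilde\theta\,f/|\Psi'_{R_i}|)$; then sum over $k$, so that the variations of $\theta\mapsto1/\sin\theta$ and of $f$ telescope over the adjacent intervals, and bound the remaining $\Var_{\tilde\theta\in J_{\xi,\ell}}(\sin\tilde\theta/|\Psi'_{R_i}|)=O(|\xi|^{-5/2})$ by Corollary~\ref{cor:Q}. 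Using once more $\sin\tilde\theta\sim W/|\xi|$, $1/\sin\theta\le C_\kappa^2\xi^2/W^2$ and~\eqref{twoDifs}, every sum over $|\xi|\ge\xi_\eta\sim W/\eta$ converges and contributes $O(\|f\|_{BV}/W)$. Since by Lemma~\ref{lem:SV} the function $Pf|_{\{\sin\tilde\theta<\eta\}}$ is supported in $\{\sin\theta<\sqrt{\eta C_\kappa}\}$, any jump at the boundary of this support is at most $\|Pf|_{\{\sin\tilde\theta<\eta\}}\|_\infty=O(\|f\|_\infty/W)$, so altogether $\Var_\theta\bigl(Pf|_{\{\sin\tilde\theta<\eta\}}\bigr)\le\tfrac{C}{W}\|f\|_{BV}$.

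Adding the two bounds gives $\|Pf|_{\{\sin\tilde\theta<\eta\}}\|_{BV}\le\tfrac{2C}{W}\|f\|_{BV}$, which is $\le\alpha\|f\|_{BV}$ with $\alpha=2C/W<1$ once $W$ is large. The main obstacle — exactly as in Section~\ref{subsec:varsmall} — is the bookkeeping in the variation step: one must make sure that the local variations over the partition $\{\Theta_{\xi,\ell,k}\}_k$ telescope correctly while the weight $\nu(p(\xi,\ell,k))$ is simultaneously used to produce the summable factor $W/\xi^2$, and the double-cheek branch, with its extra expansion factor $\gtrsim\kappa_{\min}/\sin\theta$ and the two-cheek estimates of Lemma~\ref{lem:SV}, must be treated on its own (it turns out to be strictly smaller than the single-cheek contributions). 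Everything else is a routine repetition of the continuity estimates of Section~\ref{sec:continuity}, made easier by the absence of $e^{itX}-1$.
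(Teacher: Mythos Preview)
Your proposal is correct and follows essentially the same approach as the paper: estimate $\|\cdot\|_\infty$ and $\Var_\theta$ separately via the averaging representation~\eqref{eq:avadv}, pull out the factor $\nu(p(J_{\xi,\ell},\tilde\theta))=O(W/\xi^2)$ from Lemma~\ref{lem:p}, and then repeat the bookkeeping of Lemmas~\ref{lem:continf} and~\ref{lem:ctvar} with the $e^{itX}-1$ factor deleted, invoking Corollary~\ref{cor:Q} for the variation of $\sin\tilde\theta/|\Psi'_{R_i}|$. The paper writes the resulting split as $V_1+V_2$ (your $I+II$) and arrives at the same $O(1/W)$ bounds, made $<1$ by choosing $W$ large.
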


	\begin{proof}
		\textbf{Estimating $\Big\|Pf\Big|_{\{\sin\tilde\theta < \eta\}} \Big\|_\infty$.}
		
		As in Section~\ref{sec:continuity}, averaging over $R_i$ means that 
		in the formula for transfer operator $P_{R_i}$ defined in~\eqref{eq:PR}
		we multiply with $\nu(p(J_{\xi,\ell},\tilde\theta))$.
		Recalling that $\sin\tilde\theta<\eta$ and proceeding similarly to~\eqref{eq:psm2},
		
		\begin{align}\label{eq:smp}
			\left\| Pf \Big|_{\{\sin \tilde\theta < \eta\}} \right\|_\infty\le
			\sup_\theta \sum_{\xi = W\max\{ \frac{1}{\eta}, \frac{1}{C_\kappa \sqrt{\sin \theta}} \} }^{\frac{W}{(C_\kappa \sin \theta)^2}}
			\left\|\frac{ f(\tilde \theta) \sin \tilde \theta}
			{|\Psi'_{R_i}(\tilde \theta)| \sin \theta}\right\|_\infty\, \nu(p(\xi, \tilde\theta))
			\1_{\xi(\tilde\theta) = \xi}.
		\end{align}
		Recall that the estimate for the derivative is given
		in~\eqref{eq:W}.
		Recall from Lemma~\ref{lem:p} that in the worst case scenario (left cheek collision), $\nu(p(J_{\xi,\ell}, \tilde\theta)) \le \frac{h^+ \, |\tan \tilde \theta|^2}{2\kappa_{\min} W}$.
		Also, recall that $\sin\tilde\theta\sim W/|\xi|$.
		Putting these together and using~\eqref{eq:smp}, we obtain
		
		\begin{align*}
			\|Pf \Big|_{\{\sin\tilde\theta < \eta\}} \|_{\infty}&\le
			\sup_\theta \|f\|_\infty \frac{1}{\sin\theta}
			\sum_{\xi = W \max\{ \frac{1}{\eta}, \frac{1}{\sqrt{C_\kappa \sin \theta}} \}}^{\frac{W}{(C_\kappa \sin \theta)^2}} \frac{\sin^2\tilde\theta}{\kappa_{\min} W}\frac{h^+ \, |\tan \tilde \theta|^2}{ 2\kappa_{\min} W}\\
			&\ll \frac{h^+}{2\kappa_{\min}^2} \|f\|_\infty \sup_{\theta} \frac{W^2}{\sin\theta}
			\sum_{\xi = W \max\{ \frac{1}{\eta}, \frac{1}{\sqrt{C_\kappa \sin \theta}} \} }^{\frac{W}{(C_\kappa \sin \theta)^2}}\frac{1}{\xi^4}
			\le \frac{h^+}{3\kappa_{\min}^2 C_\kappa^{3/2}} \|f\|_\infty \frac{\sqrt{\sin\theta}}{W} < \frac18,
		\end{align*}
		provided we take $W\geq 8h^+/(3\kappa_{\min}^2 C_\kappa^{3/2})$.\\
		
		\textbf{Estimating $\Var\left(Pf\Big|_{\{\sin \tilde\theta < \eta\}}\right)$.}
		For this part we proceed as in Section~\ref{subsec:varsmall} and we only sketch the argument. Here the calculations
		are easier due to the absence of $e^{itX}-1$.
		
		In short, \eqref{eq:varsum} and~\eqref{eq:var1} are replaced by
		\begin{align}\label{eq:varsum111}
			\Var_{\theta}(Pf|_{\{ \sin \tilde \theta < \eta\} } )
			\leq
			\sum_{|\xi| \geq \xi_\eta} \sum_{\ell \in \{ L,R,D\}} \frac{C_p W}{\xi^2}
			\max_{R_i \in p(\xi,\ell)} \Var_{\theta} \left( 
			\frac{f(\tilde \theta)}{|\Psi'_{R_i}(\tilde\theta)|} \frac{\sin \tilde \theta}{\sin \theta}  \1_{J_{\xi,\ell}}(\tilde\theta)  \right)
		\end{align}
		with
		\begin{align}\label{eq:var11111}
			\Var_{\theta} & \left(
			\frac{f(\tilde \theta)}{|\Psi'_{R_i}(\tilde\theta)|} \frac{\sin \tilde \theta}{\sin \theta} \1_{J_{\xi,\ell}}(\tilde\theta)  \right)
			=
			\Var_{\theta} \left( \frac{1}{\sin\theta} |_{\Psi_{R_i}(J_{\xi,\ell})} \right) \sup_{\tilde\theta \in J_{\xi,\ell}}
			\left| \frac{f(\tilde \theta)}{|\Psi'_{R_i}(\tilde\theta)|} \sin \tilde \theta \,  \1_{J_{\xi,\ell}}(\tilde\theta) \right| \nonumber \\
			& \quad + \ \sup_{\theta \in \Psi_{R_i}(J_{\xi,\ell})}\frac{1}{\sin\theta}
			\Var_{\tilde \theta}
			\left( \frac{f(\tilde \theta)}{|\Psi'_{R_i}(\tilde\theta)|} \sin \tilde \theta \,  \1_{J_{\xi,\ell}}(\tilde\theta) \right)\nonumber\\
			& =\, V_1(\xi,f,W)+V_2(\xi,f,W).
		\end{align}

		For $V_1$ we just need to recall that
		$ \Var_{\theta}\left(\frac{1}{\sin\theta}|_{\Psi_{R_i}(J_{\xi,\ell})} \right) \leq \sup_{\theta \in \Psi_{R_i}(J_{\xi,\ell})} \frac{1}{\sin\theta}$ by the proof of Lemma~\ref{lem:continf} (first lines below~\eqref{eq:var1}).
		Hence, the sums over terms with $V_1(\theta, f,W)$ can be dealt with similarly to estimating $\|Pf \Big|_{\{\sin \tilde\theta < \eta\}}\|_{\infty}$, which gives another term strictly less than $1/8$, or similarly to estimating $I$ inside the proof of Lemma~\ref{lem:continf}.

		For $V_2$, we proceed as in estimating $II$ inside the proof of Lemma~\ref{lem:continf}.
		The absence of the factor $e^{itX}-1$ much simplifies the calculation. More precisely,
		
		\begin{eqnarray}\label{eq:II111}
			V_2(\xi,f,W) &\leq& \sup_\theta \left(
			\frac{\Var(f)}{\sin\theta}
			\left| \frac{f(\tilde\theta) \, \sin \tilde\theta}{|\Psi'_{R_i}| } \right| + \
			\frac{\| f \|_\infty}{\sin\theta} \Var_{\tilde\theta} \left( \frac{\sin \tilde\theta}{|\Psi'_{R_i}| } \1_{J_{\xi,\ell}} \right) \right).
		\end{eqnarray}
		From Corollary~\ref{cor:Q} we have $\Var_{\tilde\theta} \left( \frac{\sin \tilde\theta}{|\Psi'_{R_i}| } \1_{I_{\xi,\ell}} \right) \leq C |\xi|^{-5/2}$ and for left cheek collisions, we have $|\Psi'_{R_i}(\tilde \theta)| \leq \frac{\sin^2 \tilde\theta}{W \kappa_{\min}}$ and $\sin \theta \geq \sin^2 \tilde \theta/C_\kappa^2 \sim W^2/(C_\kappa \xi)^2$.
		Therefore
		$$
		V_2(\xi,f,W) \leq \frac{C_\kappa^2 }{\kappa_{\min} W |\xi|} \Var(f) +  \frac{C_\kappa^2}{W^2 |\xi|^{\frac12}} \| f \|_\infty.
		$$
		Taking the sum over all relevant $(\xi,\ell)$ and recalling that $\xi_\eta \sim W/\eta$,
		we find
		\begin{eqnarray*}
			\sum_{|\xi| > \xi_\eta} \sum_{\ell \in \{L,R,D\}} \frac{C_p W}{\xi^2} \max_{R_i \in p(\xi,\ell)}
			V_2(\xi,f,W)
			&\leq& 3C_p\sum_{|\xi| > \xi_\eta}
			\frac{C_\kappa^2 }{\kappa_{\min} |\xi|^3} \Var(f) +  \frac{C_\kappa^2}{W |\xi|^{\frac52}} \| f \|_\infty \\
			&\leq&  \frac{3 C_p C_\kappa^2 \eta^2}{\kappa_{\min}W^2}  \| f \|_{BV} < \frac18,
		\end{eqnarray*}
		for $W > 2C_\kappa\eta \sqrt{6C_p/\kappa_{\min}}$.
		The conclusion follows by adding the sums over $V_1$ and $V_2$.
	\end{proof}

	\subsection{Estimating $\|Pf\|_{BV}$ when $\sin\theta\ge\eta$}
	\label{subsec:lyaw}

	In this section we proceed as in Section~\ref{subsec:contaw}
	without the presence of of the displacement $X$.

	\begin{lemma}\label{lem:lylarge}
		Let $\eta > 0$ be as in Lemma~\ref{lem:Lambda}.
		There exists $\alpha\in (0,1)$ and $C, C'>0$ so that
		\begin{equation*}
			\Var_\theta\left( P_{R_i} f\big|_{\{\sin \tilde\theta \geq \eta\} } \right)
			\leq \alpha \Var(f) + C \| f \|_\infty,\quad \left\|\left( P_{R_i} f\big|_{\{\sin \tilde\theta \geq \eta\} } \right)\right\|_\infty
			\leq C' \| f \|_\infty.
		\end{equation*}
	\end{lemma}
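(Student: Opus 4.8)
The plan is to run, almost verbatim, the bookkeeping of the proof of Lemma~\ref{lem:contaw}, deleting the factor $e^{itX}-1$ throughout: the smallness that the factor $|t|$ provided there will now be provided instead by the uniform expansion of $\Psi_{R_i}$ on $\{\sin\tilde\theta\ge\eta\}$ coming from~\eqref{twoDifs}. The $\|\cdot\|_\infty$ estimate is immediate and gives $C'=1$: since $\mu$ is $\Psi_{R_i}$-invariant we have $P_{R_i}\1\equiv 1$, and restricting the sum in~\eqref{eq:PR} to the branches with $\sin\tilde\theta\ge\eta$ only discards non-negative summands, so $\big|P_{R_i}f\big|_{\{\sin\tilde\theta\ge\eta\}}(\theta)\big|\le\|f\|_\infty\,P_{R_i}\1(\theta)=\|f\|_\infty$.

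For the variation I would write $P_{R_i}f\big|_{\{\sin\tilde\theta\ge\eta\}}(\theta)=\frac1{\sin\theta}\sum_{\ell\in\Lambda_\eta}\frac{f(\tilde\theta)\sin\tilde\theta}{|\Psi'_{R_i}(\tilde\theta)|}\1_{\Psi_{R_i}(J_\ell)}(\theta)$, with $\tilde\theta=\Psi_{R_i}^{-1}(\theta)$ on the branch $J_\ell$, and apply $\Var(uv)\le\|u\|_\infty\Var(v)+\|v\|_\infty\Var(u)$ first with $u=1/\sin\theta$ and then, on each branch, once more to split $f$ from $\sin\tilde\theta/|\Psi'_{R_i}|$, exactly as in Lemma~\ref{lem:contaw}. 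Here one uses that $1/\sin\theta=O(1)$ on $D_\eta:=\Psi_{R_i}(\{\sin\tilde\theta\ge\eta\})$ — a near-grazing exit would force a near-grazing entry, by Lemma~\ref{lem:SV} applied to the time-reversed trajectory — and that the branch indicators contribute only boundary terms $\lesssim\sup|\cdot|$ which go into the $\|f\|_\infty$-part. The resulting bound is a sum of one term proportional to $\Var(f)$, namely $\big(\sup_{D_\eta}\tfrac1{\sin\theta}\big)\big\|\tfrac{\sin\tilde\theta}{|\Psi'_{R_i}|}\big\|_\infty\sum_{\ell\in\Lambda_\eta}\Var_{J_\ell}(f)$, and several terms proportional to $\|f\|_\infty$ built from $\#\Lambda_\eta$, $\big\|\tfrac{\sin\tilde\theta}{|\Psi'_{R_i}|}\big\|_\infty$, $\Var_\theta\big(\tfrac1{\sin\theta}|_{D_\eta}\big)$ and $\sum_{\ell\in\Lambda_\eta}\Var_{J_\ell}\!\big(\tfrac{\sin\tilde\theta}{|\Psi'_{R_i}|}\big)$.

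The decisive input is~\eqref{twoDifs}: on $\{\sin\tilde\theta\ge\eta\}$ one has $\sin\tilde\theta\le1$ and $\sin\tilde\theta\max\{\sin\tilde\theta,\sin\theta\}\le1$, hence $|\Psi'_{R_i}|\ge1+W\kappa_{\min}$ and $\big\|\tfrac{\sin\tilde\theta}{|\Psi'_{R_i}|}\big\|_\infty\le(1+W\kappa_{\min})^{-1}$. Combined with $\sum_{\ell\in\Lambda_\eta}\Var_{J_\ell}(f)\le\Var(f)$ and $\sup_{D_\eta}1/\sin\theta=O(1)$, the $\Var(f)$-term is at most $\alpha\Var(f)$ with $\alpha=\frac{C}{1+W\kappa_{\min}}$, which is $<1$ once $W$ is taken large enough (consistently with the standing largeness assumptions on $W$). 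For the $\|f\|_\infty$-terms: Lemma~\ref{lem:Lambda} gives $\#\Lambda_\eta\le C_\eta W$; the per-branch estimate $\Var_{J_\ell}(1/|\Psi'_{R_i}|)=O(1/W)$ from the proof of Lemma~\ref{lem:VarPsi}, together with $\Var_{J_\ell}(\sin\tilde\theta)\le1$ and the expansion bound, gives $\sum_{\ell\in\Lambda_\eta}\Var_{J_\ell}\!\big(\tfrac{\sin\tilde\theta}{|\Psi'_{R_i}|}\big)=O(1)$; and $(\#\Lambda_\eta)\big\|\tfrac{\sin\tilde\theta}{|\Psi'_{R_i}|}\big\|_\infty\le C_\eta W(1+W\kappa_{\min})^{-1}=O(1)$; with $\Var_\theta\big(\tfrac1{\sin\theta}|_{D_\eta}\big)$ finite, all such terms are $\le C\|f\|_\infty$ with $C$ depending only on $\eta,\kappa_{\min},\kappa_{\max},W$. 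This gives both claimed inequalities, with $\alpha\in(0,1)$ as above.

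The main obstacle is the $W$-bookkeeping: one must check that every factor of $W$ produced by $\#\Lambda_\eta$ or by $\Var_\theta(1/\sin\theta|_{D_\eta})$ is either cancelled by a factor $1/W$ from $1/|\Psi'_{R_i}|$ or else appears only against $\|f\|_\infty$ (where a $W$-dependent constant is harmless), whereas the single term carrying $\Var(f)$ contains only the beneficial factor $(1+W\kappa_{\min})^{-1}$, so that for $W$ large the contraction ratio is genuinely $<1$. A secondary point needing care is the bound $1/\sin\theta=O(1)$ on $D_\eta$, which is not automatic and relies on Lemma~\ref{lem:SV} applied to the reversed trajectory.
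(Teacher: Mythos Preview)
Your proposal is correct and follows essentially the same approach as the paper: split off the factor $1/\sin\theta$, apply the product rule for variation branch by branch, and use the expansion bound~\eqref{twoDifs} to make the coefficient of $\Var(f)$ small (of order $1/W$), while the remaining terms---which involve $\#\Lambda_\eta=O(W)$ and $\sum_\ell\Var_{J_\ell}(\sin\tilde\theta/|\Psi'_{R_i}|)$---are absorbed into $C\|f\|_\infty$. Your handling of $\sup_{D_\eta}1/\sin\theta$ via time-reversal in Lemma~\ref{lem:SV} matches the paper's one-line recall that $\sin\tilde\theta\ge\eta$ forces $\sin\theta\ge\eta^2/C_\kappa^2$, and your $\|\cdot\|_\infty$ argument via $P_{R_i}\1\equiv 1$ is in fact cleaner than what the paper gives (it omits that computation entirely).
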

	
	\begin{proof}If $\theta \mapsto \Psi_{R_i}^{-1}(\theta) = \tilde \theta \in J_{\xi,\ell}$
		refers to a single monotone branch of $\Psi_{R_i}^{-1}$,
		then $\Var_\theta (f \circ \Psi_{R_i}^{-1}) = \Var_{ \theta}(f|_{I_{\xi,\ell}})$. Recall from Lemma~\ref{lem:SV} that $\sin \tilde \theta \geq \eta$ implies that $\sin \theta \geq \eta^2/ C_\kappa^2$.
		Compute that
		\begin{eqnarray*}
			\Var_\theta\left( P_{R_i} f\big|_{\{\sin \tilde\theta \geq \eta\} } \right) &=&
			\Var_\theta\left( \frac{1}{\sin \theta} \sum_{\ell \in \Lambda_\eta}
			\frac{f \circ \Psi^{-1}_{R_i}(\theta) \cdot \sin(\Psi_{R_i}^{-1}(\theta))}
			{|\Psi'_{R_i}(\Psi_{R_i}^{-1}(\theta)) } \1_{J_{\xi,\ell}}(\Psi_{R_i}(\theta))\right) \\
			&\leq&
			\left\| \frac{1}{\sin \theta}\big|_{\{ \sin \theta \geq \eta \}} \right\|_\infty \sum_{\ell \in \Lambda_\eta}
			\Var_{ \theta}\left(  \frac{f(\tilde\theta) \cdot \sin(\tilde\theta)}
			{|\Psi'_{R_i}(\tilde \theta) | } \1_{J_{\xi,\ell}}(\tilde \theta) \right) \\
			&& +\, \Var\left(\frac{1}{\sin \theta} \big|_{\sin \theta \geq \eta} \right)
			\left\| \sum_{\ell \in \Lambda_\eta}
			\frac{ f(\tilde\theta) \cdot \sin(\tilde\theta)} {|\Psi'_R(\tilde \theta) | } \1_{J_{\xi,\ell}}(\tilde \theta) \right\|_\infty \\
			&\leq& \frac{C_\kappa^2}{\eta^2} \sum_{\ell \in \Lambda_\eta} \left(
			\Var_{ \theta}(f|_{J_{\xi,\ell}}) \left\| \frac{\sin(\tilde\theta)}
			{|\Psi'_{R_i}(\tilde \theta) | } \1_{J_{\xi,\ell}}(\tilde \theta)\right\|_\infty \right. \\
			&& \left. \qquad \qquad \qquad + \, \| f|_{J_{\xi,\ell}} \|_\infty \Var_{ \theta}\left( \frac{\sin(\tilde\theta)}
			{|\Psi'_{R_i`}(\tilde \theta) | } \1_{J_{\xi,\ell}}(\tilde \theta)\right) \right) \\
			&&  + \, \frac{2C_\kappa^2}{\eta^2} \| f \|_\infty \sum_{\ell \in \Lambda_\eta}
			\left\| \frac{\sin(\tilde\theta) }{ |\Psi'_{R_i`}(\tilde \theta) | }\1_{J_{\xi,\ell}}(\tilde \theta) \right\|_\infty \\
			&\leq& \frac{C}{2\eta^2 W}\sum_{\ell \in \Lambda_\eta}
			\Var(f|_{J_{\xi,\ell}}) + \frac{2N W}{\eta^3} \frac{C}{W} \| f \|_\infty + \frac{N W}{\eta^3} \frac{C}{W} \| f \|_\infty\\
			&\leq& \frac{C}{2\eta^2 W} \Var(f) + \frac{3NC}{2\eta^3} \| f \|_\infty.
		\end{eqnarray*}
		By taking $W > 3C/(2\eta^2)$, this gives a Lasota-Yorke inequality for this part of the variation with $\alpha = \frac13$, already for the non-averaged transfer operator. Averaging cannot undo this, so we have
		$\Var_\theta\left( P f\big|_{\{\sin \tilde\theta \geq \eta\} } \right) \leq \frac13 \Var(f) + C' \| f \|_\infty$ for some $C'>0$. This proves the statement on the variation. The estimate for the infinity norm is simpler and omitted.
	\end{proof}
	
	\subsection{Proof of Proposition~\ref{prop:ly}}
	\label{subsec:ly}
	
	By Lemmas~\ref{lem:varpfsm} and~\ref{lem:lylarge},
	$\Var_\theta\left( P f\right) \leq \alpha \Var(f) + C \| f \|_\infty$,
	and $\left\| P f\right\|_\infty \leq C_2 \| f \|_\infty$, for some $C, C_2>0$.
	Repeated applications of these inequalities gives that
	$\Var_\theta\left( P^n f\right) \leq \alpha^n \Var(f) + C_0\| f \|_\infty$
	for some $C_0>0$. Therefore
	$\left\| P^n f\right\|_{BV} \leq \alpha^n \Var(f) + C_1\| f \|_\infty
	<\alpha^n \|f\|_{BV} + C_1\| f \|_\infty$, for some $C_1>0$, as desired.

	\section{Proof of Theorem~\ref{thm:main}}\label{sec:CLT}
	
	Recall that $X(\theta) = W/\tan\theta$, that $\E_\mu(X)=0$ by the symmetry of $d\mu = \frac12 \sin \theta d\theta$,
	and that
	$S_nX := \sum_{i=0}^{n-1} X_i$ where $X_i = X \circ (\Psi_{R_i} \circ \dots \circ \Psi_{R_0})$.
	
	In the rest of the section we show that for all $t\in\R$, as $n\to\infty$
	\begin{equation}\label{eq:F}
		\E_{\nu^{\otimes\Z}\times\mu}\left(e^{it\frac{S_nX}{\sqrt{n\log n}}}\right)\to e^{-\frac{W^2}{2} t^2}.
	\end{equation}
	Provided~\eqref{eq:F} holds, the conclusion of Theorem~\ref{thm:main} follows by the Levy Continuity Theorem. This means that the Gaussian random variable has mean $0$ and variance $W$.
	
	We need to study the RHS of~\eqref{eq:F} relating to the behaviour of $P_t$, which is Nagaev's method.
	By, for instance, repeating word by word the argument used in the proof of~\cite[Lemma 3.7]{ANS}, we obtain
	\begin{equation}\label{eq:F2}
		\E_{\nu^{\otimes\Z}\times\mu}\left(e^{it\frac{S_nX}{\sqrt{n\log n}}} f\right)
		=\int_{(0,\pi)}P_{\frac{t}{\sqrt{n\log n}}}^n f\, d\mu.
	\end{equation}

	We record the following easy lemma, which shows that the displacement
	$X(\theta) = W/\tan(\theta)$ barely fails to be in $L^2(\mu)$.
	
	\begin{lemma}\label{lem:tailX}
		The measures $\mu(\theta: X(\theta)>N)=\frac{W^2}{4N^2}(1+o(1))$ and $\mu(\theta: X(\theta)<-N)=\frac{W^2}{4N^2}(1+o(1))$ as $N\to\infty$.
	\end{lemma}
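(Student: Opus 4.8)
\emph{Proof sketch.} The plan is to turn the statement into an elementary one‑variable computation. Since $X(\theta)=W/\tan\theta$ is non‑positive on $[\pi/2,\pi)$ and strictly decreasing from $+\infty$ to $0$ on $(0,\pi/2)$, the event $\{X>N\}$ is exactly the interval $(0,\theta_N)$ with $\theta_N:=\arctan(W/N)$; indeed $W/\tan\theta>N\iff\tan\theta<W/N\iff\theta<\theta_N$ for $\theta\in(0,\pi/2)$. Hence
\begin{equation*}
\mu(X>N)=\frac12\int_0^{\theta_N}\sin\theta\,d\theta=\frac12\bigl(1-\cos\theta_N\bigr).
\end{equation*}

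Next I would plug in the asymptotics of $\theta_N$. As $N\to\infty$ we have $\theta_N=\arctan(W/N)=\frac{W}{N}+O(N^{-3})$, so $\theta_N\to 0$ and $\theta_N=\frac{W}{N}\bigl(1+o(1)\bigr)$. Combining this with the Taylor expansion $1-\cos\theta_N=\frac{\theta_N^2}{2}\bigl(1+o(1)\bigr)$ gives
\begin{equation*}
\mu(X>N)=\frac{\theta_N^2}{4}\bigl(1+o(1)\bigr)=\frac{W^2}{4N^2}\bigl(1+o(1)\bigr),
\end{equation*}
which is the first claim. For the second claim I would invoke the symmetry $\theta\mapsto\pi-\theta$: the measure $d\mu=\frac12\sin\theta\,d\theta$ is invariant under it, while $X(\pi-\theta)=W/\tan(\pi-\theta)=-W/\tan\theta=-X(\theta)$, so $\mu(X<-N)=\mu(X>N)$ and the same asymptotic follows.

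There is no genuine obstacle here; the only point requiring a little care is the bookkeeping of the $o(1)$ terms, i.e.\ checking that the errors coming from $\theta_N\sim W/N$ and from the Taylor remainder of $\cos$ combine into a single multiplicative $\bigl(1+o(1)\bigr)$ factor. Everything else is elementary calculus.
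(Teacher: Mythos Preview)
Your proof is correct and follows essentially the same approach as the paper: identify the event $\{X>N\}$ with the interval $(0,\arctan(W/N))$, integrate $\frac12\sin\theta$ to get $\frac12(1-\cos\theta_N)$, and Taylor-expand. The only cosmetic difference is that the paper expands via the exact identity $\cos\arctan(W/N)=1/\sqrt{1+W^2/N^2}$ rather than first approximating $\theta_N$, and it handles the negative tail by saying ``likewise'' rather than by the explicit symmetry $\theta\mapsto\pi-\theta$ you invoke.
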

	
	\begin{proof}
		For $N> 0$ given, it holds that $X(\theta) = W/\tan\theta>N$ if and only if $0 < \theta < \arctan(W/N)$. Therefore
		$$
		\mu(\{\theta: X(\theta) > N \}) =  \int_{0}^{\pi}\1_{(N, \infty)}(X(\theta) )\, d\mu 
		=  \int_{0}^{\arctan(W/N)} \frac12 \sin \theta d\theta
		= \frac12 \left( 1-\cos\arctan\left(\frac{W}{N}\right)\right).
		$$
		Using a Taylor approximation as $N \rightarrow \infty$, we find that
		$$
		\cos\arctan\left(\frac{W}{N}\right) = \frac{1}{\sqrt{1+\frac{W^2}{N^2}}} = 1 - \frac{W^2}{2N^2}(1+o(1)),
		$$
		so $\mu(\{\theta: X(\theta)>N\}) = \frac{W^2}{4N^2}(1+o(1))$. The statement on $\mu(\{\theta: X(\theta)<-N\})$ follows likewise.
	\end{proof}

	\subsection{Spectral decomposition for $P_t$}
	
	Repeating the steps of the proof of Proposition~\ref{prop:ly},
	we can show that the average perturbed operator $P_t$, $t\in\R$, also satisfies
	the inequalities
	$\|P_t^nf\|_{BV} \le \alpha^n\|f\|_{BV}+C\|f\|_\infty$
	and $\|P_t^nf\|_\infty \le C'\|f\|_\infty$
	for some $\alpha\in (0,1)$ and $C,C'>0$.
	By Proposition~\ref{prop:cont}, the family $(P_t), t\in\R$, is
	continuous, when regarded as operators acting on $BV$.
	As a consequence, the associated eigenfamilies are also continuous in $t$.
	That is, the family of dominating eigenvalues
	$\lambda_t$ with corresponding eigenprojector operators $\Pi_t$
	and eigenvectors $v_t$ is continuous in $t$ (with the same continuity bound as that of Proposition~\ref{prop:cont}.)
	The family of eigenvalues $\lambda_t$ is well-defined for $t \in B_\eps(0)$.

	Recall the spectral decomposition for $P$ in
	Section~\ref{subsec:spde}. Standard arguments for smooth perturbation of linear operators (see~\cite{AD01, HL})
	ensure that for all $|t|<\eps$,
	\begin{align}\label{perdec}
		P_t^n=\lambda_t^n\Pi_t+ Q_t^n,\quad \Pi_t Q_t=Q_t\Pi_t=0 \quad \text{ and } \quad \|Q_t^n f\|_{BV}\le \delta^n \quad \text{ for some }\delta\in (0,1).
	\end{align}
	
	\subsection{Asymptotics of the dominating eigenvalue $\lambda_t$}
	\label{subsec:ev}

	Using Lemma~\ref{lem:tailX} and Proposition~\ref{prop:cont}
	we obtain the asymptotics of $\lambda_t$.
	
	\begin{lemma}\label{lem:lam}
		$1-\lambda_t = \frac{W^2}{2} t^2\log(1/|t|)(1+o(1))$ as $t\to 0$.
	\end{lemma}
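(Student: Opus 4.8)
The plan is to run the standard Nagaev--Guivarc'h computation of the leading eigenvalue, feeding in the tail estimate of Lemma~\ref{lem:tailX}. Recall from~\eqref{perdec} and the discussion preceding Lemma~\ref{lem:lam} that for $|t|$ small the top eigenspace of $P_t$ is one–dimensional, spanned by $v_t$, which we normalise so that $\int_{(0,\pi)} v_t\,d\mu = 1$; by Proposition~\ref{prop:cont} and perturbation theory $\|v_t-\1\|_{BV}=O(|t|)$, in particular $\|v_t-\1\|_\infty=O(|t|)$. The averaged transfer operator preserves the integral against $\mu$, so $\int_{(0,\pi)} P_tf\,d\mu=\int_{(0,\pi)} P(e^{itX}f)\,d\mu=\int_{(0,\pi)} e^{itX}f\,d\mu$. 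Applying this to $f=v_t$ and using $P_tv_t=\lambda_tv_t$ with $\int v_t\,d\mu=1$ gives
\begin{equation}\label{eq:lambdat}
	\lambda_t=\int_{(0,\pi)}e^{itX}v_t\,d\mu,\qquad
	1-\lambda_t=\int_{(0,\pi)}(1-e^{itX})\,d\mu+\int_{(0,\pi)}(1-e^{itX})(v_t-\1)\,d\mu.
\end{equation}

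\textbf{The error term.} From Lemma~\ref{lem:tailX} we have $\mu(|X|>s)=O(s^{-2})$, hence $X\in L^1(\mu)$ (while $X\notin L^2(\mu)$). Using $|1-e^{itX}|\le|t|\,|X|$ and $\|v_t-\1\|_\infty=O(|t|)$,
\[
	\Bigl|\int_{(0,\pi)}(1-e^{itX})(v_t-\1)\,d\mu\Bigr|\le\|v_t-\1\|_\infty\,|t|\int_{(0,\pi)}|X|\,d\mu=O(t^2)=o\bigl(t^2\log(1/|t|)\bigr).
\]
For the first term in~\eqref{eq:lambdat}, the symmetry $X(\pi-\theta)=-X(\theta)$ together with the invariance of $d\mu=\tfrac12\sin\theta\,d\theta$ under $\theta\mapsto\pi-\theta$ gives $\int\sin(tX)\,d\mu=0$, so $\int_{(0,\pi)}(1-e^{itX})\,d\mu=\int_{(0,\pi)}(1-\cos(tX))\,d\mu$.

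\textbf{The main term.} It remains to show $\int(1-\cos(tX))\,d\mu=\tfrac{W^2}{2}t^2\log(1/|t|)(1+o(1))$. I would truncate at the level $|X|=1/|t|$. On $\{|X|>1/|t|\}$ the integrand is at most $2$, so that part is $\le 2\mu(|X|>1/|t|)=W^2t^2(1+o(1))=O(t^2)$. On $\{|X|\le 1/|t|\}$, writing $1-\cos u=\tfrac12u^2+O(u^4)$ and noting that $\int_{\{|X|\le M\}}X^4\,d\mu=O(M^2)$ (another consequence of the $s^{-2}$ tail, by integration by parts), that part equals $\tfrac{t^2}{2}\int_{\{|X|\le 1/|t|\}}X^2\,d\mu+O(t^2)$. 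Finally, integrating the tail asymptotics of Lemma~\ref{lem:tailX} (e.g.\ via $\int_{\{|X|\le M\}}X^2\,d\mu=-M^2\mu(|X|>M)+2\int_0^M s\,\mu(|X|>s)\,ds$) yields $\int_{\{|X|\le M\}}X^2\,d\mu=W^2\log M\,(1+o(1))$ as $M\to\infty$; with $M=1/|t|$ this gives $\tfrac{t^2}{2}\int_{\{|X|\le 1/|t|\}}X^2\,d\mu=\tfrac{W^2}{2}t^2\log(1/|t|)(1+o(1))$. Collecting the three contributions in~\eqref{eq:lambdat} produces the claim.

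\textbf{Main obstacle.} The computation is classical once~\eqref{eq:lambdat} is in place; the one point requiring care is the $(v_t-\1)$ contribution, which is the usual ``recentering'' error in the Nagaev method: one needs $X\in L^1(\mu)$ so that this error is genuinely $O(t^2)$ and therefore negligible against $t^2\log(1/|t|)$, and this integrability is exactly what the $s^{-2}$ tail of Lemma~\ref{lem:tailX} delivers. The explicit constant $W^2/2$ then tracks through because the two one–sided tails of Lemma~\ref{lem:tailX} sum to $\mu(|X|>s)=\tfrac{W^2}{2s^2}(1+o(1))$.
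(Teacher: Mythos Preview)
Your proof is correct and follows essentially the same route as the paper: the decomposition~\eqref{eq:lambdat}, the $O(t^2)$ bound on the $(v_t-\1)$ term via $\|v_t-\1\|_{BV}=O(|t|)$ and $X\in L^1(\mu)$, and the evaluation of the truncated second moment by integration by parts using the tail asymptotics of Lemma~\ref{lem:tailX} all match the paper's argument. The only cosmetic difference is that where the paper cites \cite{ADb} for the passage from $\int(1-e^{itX})\,d\mu$ to $\tfrac{t^2}{2}L(1/|t|)(1+o(1))$, you instead invoke the symmetry $X(\pi-\theta)=-X(\theta)$ to kill the imaginary part and then Taylor-expand $1-\cos$ directly; this is the same standard computation, just made explicit.
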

	
	\begin{proof}
		Write $v_t$ for the eigenfunction of $\lambda_t$. Note that
		\begin{align*}
			1-\lambda_t=\int (1-e^{itX})\, v_0\,d\mu+\int (1-e^{itX})(v_t- v_0)\, d\mu.
		\end{align*}
		Here $v_0\equiv 1$ is the eigenvector associated with the eigenvalue $\lambda_0=1$. From here onward the argument is standard, see~\cite{ADb}. In particular, the first part of the calculations used in~\cite[Proof of Theorem 3.1]{ADb} shows that
		$$
		\int (1-e^{itX})\, v_0\, d\mu =\int (1-e^{itX}+itX)\, v_0\, d\mu
		= \frac{L(1/|t|)}{2} t^2  (1+o(1)),
		$$
		for $L(1/t) := \int_{-1/|t|}^{1/|t|} u^2 \, d\mathbb P(u)$.
		Here $\mathbb P(u) = \mu(\theta : X(\theta) < u)$, 	so the tail estimates of Lemma~\ref{lem:tailX} and integration by parts
		give
		\begin{eqnarray*}
			L(1/|t|) &=& \int_{-1/|t|}^{1/|t|} u^2 \, d\mathbb P(u)
			= \int_{-1/|t|}^0 u^2 \, d\mathbb P(u) - \int_0^{1/|t|} u^2 \, d(1-\mathbb P(u)) \\
			&=& -\int_{-1/|t|}^0 2u \mathbb P(u) \, du +
			\left[ u^2 \mathbb P(u)\right]_{-1/|t|}^0
			+ \int_0^{1/|t|}  2u (1-\mathbb P(u)) \, du -
			\left[ u^2 (1-\mathbb P(u))\right]_0^{1/|t|} \\
			&\sim& -\int_{-1/|t|}^0 2u \max\left\{ \frac{W^2}{4u^2}, \frac12\right\} \, du - \frac{W^2}{4}
			+ \int_0^{1/|t|}  2u \max\left\{ \frac{W^2}{4} , \frac12\right\}\, du - \frac{W^2}{4} \\
			&\sim& W^2 \log(1/|t|) + O(W^2) = W^2 \log(1/|t|) (1+o(1)),
		\end{eqnarray*}
		where $\sim$ indicates factors $1+o(1)$ as $t \to 0$.
		By Proposition~\ref{prop:cont} and standard perturbation theory of linear operators, $\|v_t-v_0\|_{BV}=O(|t|)$. Since $BV\subset L^\infty$,
		\begin{align*}
			\left|\int (1-e^{itX})(v_t- v_0)\, d\mu\right|
			\le |t|\, \|v_t-v_0\|_{\infty}\int |X|\, d\mu\ll |t|\, \|v_t-v_0\|_{BV}
			\ll t^2.
		\end{align*}
		and the conclusion follows.
	\end{proof}
	
	\subsection{Proof of Equation~\eqref{eq:F}}
	\label{subsec:ev2}
	
	By Lemma~\ref{lem:lam},
	$\lambda_t^n= e^{-n\frac{W^2}{2} t^2\log(1/|t|)(1+o(1))}$, as $t\to 0$.
	By Proposition~\ref{prop:cont}, $\|\Pi_t-\Pi_0\|_{BV}=O(|t|)$.
	Combining this with~\eqref{perdec}, we get
	$P_t^n f= e^{-n \frac{W^2}{2} t^2\log(1/|t|)(1+o(1))}\int f\, d\mu(1+o(1))$.
	
	Recalling~\eqref{eq:F2}, we get that for any $t\in\R$, as $n\to\infty$,
	$$
	\E_{\nu^{\otimes\Z}\times\mu}\left(e^{it\frac{S_nX}{\sqrt{n\log n}}} f\right)=e^{-\frac{n}{n\log n}\frac{W^2}{2} t^2\log(n/|t|)(1+o(1))}\int f\, d\mu(1+o(1))\to e^{-\frac{W^2}{2} t^2}\int f\, d\mu,
	$$
	as required.
	\\[5mm]
	{\bf Declaration:} The authors have no relevant financial or non-financial interests to disclose. There are no relevant research data other than presented in this paper.

\end{document}